\renewcommand*{\eqref}[1]{\hyperref[{#1}]{\textup{\tagform@{\ref*{#1}}}}}
\newcommand\centre[4][below]{\node (#3) at #2 [circle,minimum size=0.5em,inner sep=0pt,thin,fill,solid] {}; \node [#1=0.1em] at (#3) {#4};}
\newcounter{cst}
\newcommand{\ctel}[1]{K_{\refstepcounter{cst}\label{#1}\thecst}}
\newcommand{\cter}[1]{K_{\ref*{#1}}} 
\newcommand{\re}{\mathbb{R}}
\newcommand{\R}{\mathbb{R}}
\newcommand{\E}{\mathbb{E}}
\newcommand{\pe}{\psi_{\epsilon}}
\newcommand{\phe}{\phi_{\epsilon}}
\newcommand{\ups}{u_{\epsilon}}
\newcommand{\fnk}{f^n_K}
\newcommand{\fkk}{f^k_K}
\newcommand{\dt}{\Delta t}
\newcommand{\erwb}{\mathbb{E}\left[}
\newcommand{\erwe}{\right]}
\newcommand{\halbe}{\frac{1}{2}}
\newcommand{\Tau}{\mathcal{T}}
\newcommand{\edges}{\mathcal{E}}
\newcommand{\dkl}{d_{K|L}}
\newcommand{\edgesint}{\mathcal{E}_{\operatorname{int}}}
\newcommand{\edgesext}{\mathcal{E}_{\operatorname{ext}}}
\newcommand{\uhnl}{u_{h,N}^l}
\newcommand{\uhnr}{u_{h,N}^r}
\newcommand{\uhnrm}{u_{h_m,N_m}^r}
\newcommand{\lzlambda}{{L^2(\Lambda)}}
\newcommand{\whnl}{w_{h,N}^l}
\newcommand{\whnr}{w_{h,N}^r}
\newcommand{\erww}[1]{\mathbb{E}\left[{#1}\right]}
\newcommand{\reg}{\operatorname{reg}(\Tau)}
\newcommand{\tn}{t_n}
\newcommand{\tnp}{t_{n+1}}
\newcommand{\unk}{u^n_K}
\newcommand{\ukk}{u^k_K}
\newcommand{\unpk}{u^{n+1}_K}
\newcommand{\ukpk}{u^{k+1}_K}
\newcommand{\ukpl}{u^{k+1}_L}
\newcommand{\unpl}{u^{n+1}_L}
\newcommand{\di}{\displaystyle}
\newtheorem{defi}{Definition}[section]
\newtheorem{lem}[defi]{Lemma}
\newtheorem{teo}[defi]{Theorem}
\newtheorem{prop}[defi]{Proposition}
\theoremstyle{remark}
\newtheorem{remark}[defi]{Remark}
\numberwithin{equation}{section}
\title{Theoretical analysis of a finite-volume scheme for a stochastic Allen--Cahn problem with constraint}
\author{Caroline Bauzet\footnotemark[1], \and C\'edric Sultan\footnotemark[1], \and Guy Vallet \footnotemark[3] \and Aleksandra Zimmermann\footnotemark[4]}
\date{\today}
\begin{document}
\maketitle

\begin{abstract} 
The aim of this contribution is to address the convergence study of a time and space approximation scheme for an Allen-Cahn problem with constraint and perturbed by a multiplicative noise of It\^o type. The problem is set in a bounded domain of $\R^d$ (with $d=2$ or $3$) and homogeneous Neumann boundary conditions are considered. The employed strategy consists in building a numerical scheme on a regularized version \enquote{\`a la Moreau-Yosida} of the constrained problem, and passing to the limit simultaneously with respect to the regularization parameter and the time and space steps, denoted respectively by $\epsilon$, $\dt$ and $h$. Combining a semi-implicit Euler-Maruyama time discretization with a Two-Point Flux Approximation (TPFA) scheme for the spatial variable, one is able to prove, under the assumption $\dt=\mathcal{O}(\epsilon^{2+\theta})$ for a positive $\theta$, the convergence of such a \enquote{$(\epsilon, \dt, h)$} scheme towards the unique weak solution of the initial problem, \textit{ a priori} strongly in $L^2(\Omega;L^2(0,T;L^2(\Lambda)))$ and \textit{a posteriori} also strongly in $L^{p}(0,T; L^2(\Omega\times \Lambda))$ for any finite $p\geq 1$.\\

\noindent\textbf{Keywords:} Stochastic non-linear parabolic equation with constraint $\bullet$ Multiplicative Lipschitz noise $\bullet$ Finite-volume method $\bullet$ Variational approach $\bullet$ Convergence analysis $\bullet$ Multivoque maximal monotone operator $\bullet$ Differential inclusion $\bullet$ Lagrange multiplier.\\

\noindent
\textbf{Mathematics Subject Classification (2020):} 60H15 $\bullet$ 35K55 $\bullet$ 65M08.
\end{abstract}

\footnotetext[1]{Aix Marseille Univ, CNRS, Centrale Med, LMA, Marseille, France, caroline.bauzet@univ-amu.fr, cedric.sultan@univ-amu.fr}
\footnotetext[3]{Univ Pau $\&$ Pays Adour, LMAP, UMR CNRS 5142, IPRA, Pau, France, guy.vallet@univ-pau.fr}
\footnotetext[4]{TU Clausthal, Institut f\"ur Mathematik, Clausthal-Zellerfeld, Germany,
aleksandra.zimmermann@tu-clausthal.de}

\section{Introduction}

We consider $\Lambda$ a bounded, open, connected, and polygonal set of $\R^d$ (with $d=2$ or $3$), and $(\Omega,\mathcal{A},\mathds{P})$ a probability space endowed with a right-continuous, complete filtration $(\mathcal{F}_t)_{t\geq 0}$.
For $T>0$, we are interested in finding a pair $(u,\psi)$ with $\psi\in \partial I_{[0,1]}(u)$, satisfying the following time noise-driven Allen-Cahn equation:
\begin{align}\label{equation}
\begin{aligned}
du+(\psi-\Delta u)\,dt &=g(u)\,dW(t)+(\beta(u)+f)\,dt, &&\text{ in }\Omega\times(0,T)\times\Lambda;\\
u(0,\cdot)&=u_0, &&\text{ in } \Omega\times\Lambda;\\
\nabla u\cdot \mathbf{n}&=0, &&\text{ on }\Omega\times(0,T)\times\partial\Lambda;
\end{aligned}
\end{align}
where $(W(t))_{t\geq 0}$ is a standard, one-dimensional Brownian motion with respect to $(\mathcal{F}_t)_{t\geq 0}$ on $(\Omega,\mathcal{A},\mathds{P})$ and $\mathbf{n}$ denotes the unit normal vector to $\partial\Lambda$ outward to $\Lambda$. As mentioned in \cite{BBBLV}\footnote{We remark that since only standard properties of $H^1(\Lambda)$ are required in \cite{BBBLV}, a Lipschitz regularity for the domain $\Lambda$ is enough.}, the sub-differential $\partial I_{[0,1]}$ represents a physical constraint on the solution of \eqref{equation} forcing it to remain bounded in $(0,1)$ by the presence of the Lagrange multiplier $\psi\in \partial I_{[0,1]}(u)$. More precisely, our equation in (\ref{equation}), can be written as a differential inclusion in the following manner:
\[\beta(u)+f-\partial_t\left(u-\int_0^\cdot g(u)\,dW\right)+\Delta u\in \partial I_{[0,1]}(u)\]
where the stochastic integral is understood in the sense of It\^o and the sub-differential of the indicator function $I_{[0,1]}:\R\rightarrow\R\cup 
\{+\infty\}$ defined by
\begin{align*}
I_{[0,1]}(r)=\begin{cases} 0 & \text{if} \ r\in [0,1]\\
+\infty & \text{else}
\end{cases}
\end{align*}
is the set-valued mapping $\partial I_{[0,1]}:[0,1]\rightarrow\mathcal{P}(\R)$ defined by 
\begin{align*}
\partial I_{[0,1]}(r)=\begin{cases} \{0\} & \text{if} \ r\in (0,1)\\
(-\infty,0] & \text{if} \ r=0\\
[0,\infty)& \text{if} \ r=1.
\end{cases}
\end{align*}
We consider the following assumptions on the data:
\begin{itemize}
\item[$\mathscr{A}_1$:] $u_0\in L^2(\Omega;L^2(\Lambda))$ is $\mathcal{F}_0$-measurable and verifies $0\leq u_0(\omega,x) \leq 1$, for almost all $(\omega,x)\in \Omega\times \Lambda$.

\item[$\mathscr{A}_2$:] $g:\re\rightarrow\re$ is a $L_g$-Lipschitz-continuous function (with $L_g\geq 0$), such that \\ $\operatorname{supp} g\subset [0,1]$.

\item[$\mathscr{A}_3$:] $\beta : \R\rightarrow \R$ is a $L_\beta$-Lipschitz-continuous function (with $L_\beta \geq 0$) such that for convenience $\beta(0)=0$.

\item[$\mathscr{A}_4$:] $f\in L^2_{\mathcal{P}_T}\big(\Omega\times(0,T);L^2(\Lambda)\big)\footnotemark[2]$.

\end{itemize}
\footnotetext[2]{For a given separable Banach space $X$, we denote by $L^2_{\mathcal{P}_T}\big(\Omega\times(0,T);X\big)$ the space of the predictable $X$-valued processes (\cite{DPZ14} p.94 or \cite{PrevotRockner} p.27). This space is the space $L^2\big(\Omega\times(0,T);X\big)$ for the product measure $d\mathds{P}\otimes dt$ on the predictable $\sigma$-field $\mathcal{P}_T$ (\textit{i.e.} the $\sigma$-field generated by the sets $\mathcal{F}_0\times \{0\}$ and the rectangles $A\times (s,t]$, for any $s,t\in[0,T]$ with $s\leq t$ and $A\in \mathcal{F}_s$).}

\begin{remark}
For technical reasons, we need the assumption $g(0)=g(1)=0$ as stated in $\mathscr{A}_2$. Without this assumption, we introduce additive noise to the problem setting. Then, already from a theoretical point of view, the analysis is much more complicated as one can see in, \textit{e.g.,} \cite{DM} and \cite{STVZ}.
\end{remark}

\subsection{Literature review}

The equation in \eqref{equation} is known in the literature as an Allen-Cahn type equation with constraint. It is applicable in modeling several physical phenomena, like phase transitions. In \cite{BBBLV}, a global existence and uniqueness result for this equation has been proposed to model the evolution of damage in continuum media. More precisely, it has been assumed that the solution $u$ to \eqref{equation} is a damage parameter, \textit{i.e.} the local proportion of active cohesive bonds in the micro-structure of a material. Then, the function $f$ on the right-hand side of \eqref{equation} denotes an external source of damage (mechanical or chemical), while the nonlinear source term $\beta$ is associated with the material's internal cohesion. A constraint was incorporated within the equation to restrict the values of $u$ to the interval $[0,1]$. This constraint has a physical meaning in the way that $u=1$ signifies that the material is completely undamaged, $u=0$ signifies that it is completely damaged while values of $u$ in $(0,1)$ represent varying degrees of intermediate damage. In \cite{BBBLV}, the physical constraint was ensured by the presence of a sub-differential graph, \textit{i.e.} a multivalued maximal monotone operator. In addition, a stochastic force term given by an It\^{o} integral has been added on the right-hand side of \eqref{equation}. Since its diffusion coefficient $g$ depends on the damage parameter, the stochastic force is said to be multiplicative. From a physical point of view, the presence of this random force term reflects the fact that the phenomenon of damage is related to microscopic changes in the structure and configuration of the material lattice as a consequence of breaking bonds and the formation of cavities and voids. These changes are clearly related to stochastic processes occurring at a microscopic level (as introduced in Ising materials), which we aimed to take into account in the macroscopic description.\\

 \noindent The literature on the deterministic Allen-Cahn equation is very rich, also including the presence of non-smooth (monotone) operators (see, among others, \cite{AC,CMZ,CGPS,DPKPJ,W}), and the stochastic case has been addressed by a growing number of surveys. Some recent results are devoted to questions of existence and uniqueness of solutions for stochastic Allen-Cahn equation \cite{RNT}, in \cite{BOS22} well posedness for stochastic Allen-Cahn type equations with $p$-Laplacian as well as the random separation property are studied. Others are more interested in questions of existence and regularity of solution for stochastic Cahn-Hilliard/Allen-Cahn problems \cite{AKM} and \cite{DPGS23}. The study of degenerate Kolmogorov equations and questions of ergodicity for stochastic Allen-Cahn equations with logarithmic potential have been considered in \cite{B21,SZ23}. Other authors have studied a stochastic Allen-Cahn-Navier-Stokes system with inertial effects and multiplicative noise of jump type in a bounded domain \cite{DPGS22,M23}. Let us mention that, in these last contributions, the sub-differential operator is replaced by smooth nonlinearities (possibly with prescribed growth conditions), as double-well potentials.\\

Furthermore, the study of stochastic partial differential inclusions in a rather general situation was carried out in \cite{BarbuRascanu,BensoussanRascanu,Rascanu}, as well as questions concerning transition semigroup and invariant measures in \cite{BarbuDaPrato}, or even obstacle problems with Lewy-{S}tampacchia's inequality for a stochastic {$T$}-monotone obstacle problem (see \cite{TV20}). According to \cite{BNSZ22}, the convergence analysis of numerical schemes for stochastic PDEs of parabolic type has been a very fashionable subject in recent decades and for this reason, an extensive literature on this topic is available (see \cite{ACQS20}, \cite{DP09} and \cite{OPW20} for a general overview). In the past the use of finite-element methods was the preferred technique for the spatial discretization of parabolic evolution equations (see \cite{Prohl}, \cite{BHL21} for a state of the art on this subject), and this is particularly true for the numerical approximation of stochastic Allen-Cahn type equations without constraint (in the chronological order let us mention the contributions \cite{FP03,QW19,BG20,ABNP21,LQ21,BGJK23,BP24}).\\

The numerical analysis of differential inclusions was first carried out on multivalued differential equations in 
\cite{V89,DL92,A94,M95}, and new studies on the subject have continued to be published ever since \cite{LV98,BS02, G03,BR07,BR10,R14,MT16}. In parallel, stochastic differential inclusions were studied in the early 2000's from a numerical point of view. Firstly, results of convergence analysis of time-discretization schemes have been derived in \cite{P00,B03,LN04, WZ13}, and secondly, convergence rate as long as error estimates have been investigated respectively in \cite{Z18} and \cite{EKKL21}. More recently, the time-space discretization of deterministic elliptic and parabolic partial differential inclusions was performed by combining Euler scheme with finite-element methods in \cite{R11,BR13,BER18}. The most challenging task in the numerical analysis of (stochastic or deterministic) partial differential inclusions is the proper formulation of the multivalued term. Often, the idea was not to determine approximate solutions of the inclusion itself but of approximate, single-valued equations (see, \textit{e.g.,} \cite{Sch86}, \cite{NP06}, \cite{LN04}). In addition to the spatial and temporal discretization parameter, an additional parameter for the regularisation of the multivalued term then appears. To obtain convergence of the scheme, one needs assumptions on the coupling of all discretization parameters.\\
 
To the best of our knowledge, the numerical analysis of stochastic partial differential inclusions is still an open topic. Our aim is then to fill the gap in the literature by addressing the first convergence analysis of a time and space discretization scheme for our stochastic Allen-Cahn problem with constraint (\ref{equation}). Let us precise that the main originality of our approach consists in the use of a finite-volume method for the spatial discretization instead of a finite-element one.

\subsection{Concept of solution and main result}
Following our previous work \cite{BBBLV}, we are interested here in the following concept of solution for Problem~\eqref{equation}:
\begin{defi}\label{solution} Any pair of stochastic processes $(u,\psi)\in \left(L^2_{\mathcal{P}_T}\big(\Omega\times(0,T);L^2(\Lambda)\big)\right)^2$ with $u$ belonging additionally to 
\begin{align*}
 L^2(\Omega;\mathscr{C}([0,T];L^2(\Lambda)))\cap L^2_{\mathcal{P}_T}\big(\Omega\times(0,T);H^1(\Lambda)\big),
\end{align*}
is a solution to Problem \eqref{equation} if almost everywhere in $(0,T)\times\Lambda$ and $\mathbb{P}$-a.s in $\Omega$, 
\[0\leq u \leq 1\text{ and }\psi\in \partial I_{[0,1]}(u),\]
and if the pair $(u,\psi)$ satisfies
\begin{align*}
u(t)=u_0+\int_0^t \big(\Delta u(s)-\psi(s)+\beta(u(s))+f(s)\big)\,ds+\int_0^t g(u(s))\,dW(s),
\end{align*}
in $L^2(\Lambda)$ and $\mathds{P}$-a.s in $\Omega$, where $\Delta$ denotes the Laplace operator on $H^1(\Lambda)$ associated with the formal Neumann boundary condition.
\end{defi}
\begin{remark}\label{240527_01}
\textit{A priori}, we have the predictability of $u$ with values in $L^2(\Lambda)$. It is a direct consequence of, \textit{e.g.,} \cite[Corollary 1.1.8]{HNVW16} that we may \textit{a posteriori} conclude that $u$ belongs to $ L^2_{\mathcal{P}_T}\big(\Omega\times(0,T);H^1(\Lambda)\big)$.
\end{remark}
Existence and uniqueness of a pair $(u,\psi)$ solution of Problem~\eqref{equation} in the sense of Definition \ref{solution} has been proved in \cite{BBBLV} for a given initial condition $u_0$ in $H^1(\Lambda)$ and under Assumptions $\mathscr{A}_2$, $\mathscr{A}_3$ and $\mathscr{A}_4$. To do so, we used a regularization procedure on the maximal monotone operator $\partial I_{[0,1]}$ by considering the following family of approximating problems depending on a parameter $\epsilon>0$:
\begin{align}\label{eqeps}
\begin{aligned}
d\ups+(\pe(\ups)-\Delta \ups)\,dt &=g(\ups)\,dW(t)+(\beta(\ups)+f)\,dt, &&\text{ in }\Omega\times(0,T)\times\Lambda;\\
\ups(0,\cdot)&=u_0, &&\text{ in } \Omega\times\Lambda;\\
\nabla \ups\cdot \mathbf{n}&=0, &&\text{ on }\Omega\times(0,T)\times\partial\Lambda;
\end{aligned}
\end{align}
where $\pe:\R\rightarrow \R$ denotes the Moreau-Yosida approximation of $\partial I_{[0,1]}$ (see, \textit{e.g.,} \cite{barbu76,brezis73}), defined for all $v\in\R$ by
\begin{eqnarray}\label{SPPR}
\pe(v)=-\frac{(v)^-}{\epsilon}+\frac{(v-1)^{+}}{\epsilon}=
\left\{\begin{array}{clll}
\di\frac{v}{\epsilon}&\text{if}&v\leq 0\\
0&\text{if}&v\in[0,1]\\
\di\frac{v-1}{\epsilon}&\text{if}&v\geq 1.
\end{array}\right.
\end{eqnarray}
Firstly, we proved in \cite{BBBLV}, for fixed $\epsilon>0$, existence and uniqueness of a solution $\ups$ for Problem (\ref{eqeps}) in the sense of Definition \ref{dups} below:
\begin{defi}\label{dups} A stochastic process $\ups\in \ L^2_{\mathcal{P}_T}\big(\Omega\times(0,T);H^1(\Lambda)\big)$ element of
\[
L^{\infty}\left(0,T;L^2(\Omega;H^1(\Lambda))\right) \cap L^2\left(\Omega;\mathscr{C}\left(0,T;L^2(\Lambda)\right)\right)\]
and such that $\displaystyle\partial_t\Big(\ups-\int_0^\cdot g(\ups)\,dW\Big)$ and $\Delta\ups$ belong to $L^2(\Omega;L^2( 0,T;L^2(\Lambda)))$, is a solution to the Problem (\ref{eqeps}) if almost everywhere in $(0,T)$ and $\mathbb{P}$-almost surely in $\Omega$, the following variational formulation holds for any $v\in H^1(\Lambda)$
\begin{align*}
\int_{\Lambda} \partial_t\left(\ups-\int_0^\cdot g(\ups) \,dW(s)\right)v \,dx+\int_\Lambda\nabla \ups.\nabla v \,dx+\int_\Lambda\pe(\ups) v \,dx=\int_\Lambda\big(\beta(\ups)+f\big)v \,dx,
\end{align*}
with $\displaystyle \mathbb{P}\text{-a.s in $\Omega$, }u_0=\lim_{t\rightarrow 0}u_\epsilon(\cdot,t)\text{ in }L^2(\Lambda).$
\end{defi}
Secondly, the analysis of the sequences $(\ups)_{\epsilon>0}$ and $(\pe(\ups))_{\epsilon>0}$ allowed us (mainly thanks to monotonicity tools) to pass to the limit with respect to the approximating parameter $\epsilon>0$. We finally proved existence of a solution $(u,\psi)$ of Problem (\ref{equation}) in the sense of Definition \ref{solution}, as a weak limit of a subsequence of the pair $(\ups,\pe(\ups))_{\epsilon>0}$. Then, we finished our study by proving the uniqueness of such a solution $(u,\psi)$.\\

\noindent The objective of the present paper is to propose a time and space approximation of the unique solution $(u,\psi)$ of Problem~\eqref{equation} in the sense of Definition \ref{solution}. To do so, our idea consists in discretizing, for a given $\epsilon>0$, Problem (\ref{eqeps}) by the way of finite-volume methods. Our main goal is to show that the resulting finite-volume approximation, depending on $\epsilon$ and on the time and space parameters (denoted respectively by $N$ and $h$ in the sequel), can be bounded independently of these three parameters, with the idea of making them tend towards zero simultaneously. The aim of the game is then to find a relationship between $\epsilon$ and $N$ that allows us to bound our finite-volume approximation and to pass to the limit in the numerical scheme, leading us to our main result stated hereafter. Thanks to the implicit time discretization of the equation's deterministic part, it seems natural that we may avoid an additional constraint linking $h$ to the other discretization parameters.

\begin{teo} \label{mainresult} 
Assume that hypotheses $\mathscr{A}_1$ to $\mathscr{A}_4$ hold and let $(u,\psi)$ be the unique solution of Problem \eqref{equation} in the sense of Definition~\ref{solution}. Let $(\Tau_m)_{m\in \mathbb{N}}$ be a sequence of admissible finite-volume meshes of $\Lambda$ in the sense of Definition \ref{defmesh} such that the mesh size $h_m$ tends to $0$, let $(N_m)_{m\in \mathbb{N}}\subset \mathbb{N}^{\star}$ be a sequence of positive integers which tends to infinity and let $(\epsilon_m)_{m\in\mathbb{N}}\subset \mathbb{R}_+^\star$ be another sequence such that $\lim_{m\rightarrow+\infty} \epsilon_m=0$. For a fixed $m\in\mathbb{N}$, let $u^r_{h_m,N_m}$ and $u^l_{h_m,N_m}$ be respectively the right and left in time finite-volume approximations defined by \eqref{eq:notation_wh}-\eqref{eq:def_u0}-\eqref{equationapprox} with $\Tau =\Tau_m$, $N=N_m$ and $\epsilon=\epsilon_m$. If there exists $\theta>0$ such that for any $m\in \mathbb{N}$, $\frac{T}{N_m}=\mathcal{O}((\epsilon_m)^{2+\theta})$, then the sequences $(\uhnrm)_{m\in \mathbb{N}}$ and $(\psi_{\epsilon_m}(\uhnrm))_{m\in \mathbb{N}}$ converge towards $u$ and $\psi$, respectively strongly and weakly in $L^2(\Omega;L^2(0,T;L^2(\Lambda)))$. Moreover, the convergence of $(\uhnrm))_{m\in \mathbb{N}}$ towards $u$ also holds strongly in $L^{p}(0,T; L^2(\Omega;L^2(\Lambda)))$ for any finite $p\geq 1$.
\end{teo}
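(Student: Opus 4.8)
The plan is to follow the three-step programme announced in the introduction: establish a priori estimates on the scheme \eqref{equationapprox} that are uniform in the triple $(\epsilon,\dt,h)$ with $\dt=T/N$; extract limits and identify them with a solution of Problem \eqref{equation} in the sense of Definition \ref{solution}; and then invoke uniqueness of that solution to promote subsequential convergence to convergence of the full sequence and to strengthen the topology.

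First, the a priori estimates. The basic energy estimate is obtained by multiplying the cell equation \eqref{equationapprox} by $\dt\,u^{n+1}_K m_K$, summing over the control volumes $K$ and over $n$, performing the discrete integration by parts on the diffusion term (which produces a discrete $L^2(0,T;H^1(\Lambda))$-seminorm controlled uniformly, thanks to Definition \ref{defmesh}), treating the time increments with the identity $2a(a-b)=a^2-b^2+(a-b)^2$, and taking expectations: the martingale part vanishes by the $\mathcal{F}_{t_n}$-measurability of the explicit coefficient $g(u^n)$, the It\^o term is estimated by the It\^o isometry using that $g$ is bounded and Lipschitz ($\mathscr{A}_2$), $\beta$ is handled by $\mathscr{A}_3$ and $f$ by $\mathscr{A}_4$, and a discrete Gronwall inequality closes the estimate. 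This yields, uniformly in $(\epsilon,\dt,h)$, bounds for $\uhnr$ and $\uhnl$ in $L^\infty(0,T;L^2(\Omega;L^2(\Lambda)))\cap L^2(\Omega;L^2(0,T;H^1(\Lambda)))$, together with $\sum_n\E\|u^{n+1}-u^n\|_{L^2(\Lambda)}^2\le C$ and hence $\|\uhnr-\uhnl\|_{L^2(\Omega;L^2(0,T;L^2(\Lambda)))}\to 0$ as $\dt\to 0$. The crucial and delicate second estimate is a uniform bound for the penalization term: testing the scheme by $\dt\,\pe(u^{n+1}_K)m_K$, using the convexity of the primitive $\Psi_\epsilon$ of $\pe$ for the time-difference term (with $\Psi_\epsilon(u^0)=0$, since $0\le u^0_K\le 1$ by $\mathscr{A}_1$) and the monotonicity of $\pe$ for the diffusion term, one is led to control a term of the form $\E\sum_n\int_\Lambda(\pe(u^{n+1})-\pe(u^n))\,g(u^n)\,\Delta W^n$, which fails to be centered precisely because $\pe(u^{n+1})$ anticipates $\Delta W^n$ through the scheme. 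Estimating it by exploiting the $1/\epsilon$-Lipschitz continuity of $\pe$ and the structure of the scheme produces a contribution in which the quantity $\dt/\epsilon^2$ appears as prefactor, and it is exactly the hypothesis $\dt=\mathcal{O}(\epsilon^{2+\theta})$, which gives $\dt/\epsilon^2=\mathcal{O}(\epsilon^{\theta})\to 0$, that makes this term absorbable and delivers $\|\pe(\uhnr)\|_{L^2(\Omega;L^2(0,T;L^2(\Lambda)))}\le C$ uniformly. We expect this estimate, and the precise bookkeeping fixing the exponent $2+\theta$, to be the main obstacle of the whole argument.

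Second, compactness and identification. From the uniform estimates one extracts a subsequence along which $\uhnr\rightharpoonup u$ in $L^2(\Omega;L^2(0,T;H^1(\Lambda)))$, with the reconstructed discrete gradients converging weakly to $\nabla u$ (the usual finite-volume argument based on Definition \ref{defmesh}), $\pe(\uhnr)\rightharpoonup\psi$ in $L^2(\Omega;L^2(0,T;L^2(\Lambda)))$, and, via a compactness argument combining the uniform discrete $H^1$-bound (controlling space translates) with an estimate on time translates obtained from the scheme, $\uhnr\to u$ strongly in $L^2(\Omega;L^2(0,T;L^2(\Lambda)))$; moreover $\uhnl\to u$ in the same space. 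Since $\epsilon\,\pe(\uhnr)=(\uhnr-1)^+-(\uhnr)^-$ is bounded in $L^2$ while $\epsilon_m\to 0$, strong convergence of $\uhnr$ forces $(u-1)^+-(u)^-=0$, i.e.\ $0\le u\le 1$ a.e. One then passes to the limit in the time-integrated (mild) form of the discrete equation tested against smooth functions: the diffusion term by the weak convergence of the discrete gradients, the penalization term by $\pe(\uhnr)\rightharpoonup\psi$, the $\beta(\uhnl)$ term by strong convergence and $\mathscr{A}_3$, the It\^o term by $g(\uhnl)\to g(u)$ strongly (using $\mathscr{A}_2$) together with the It\^o isometry, and the discrete time-derivative term by a summation by parts, while $u^0_{h_m}\to u_0$ in $L^2(\Lambda)$; this identifies $(u,\psi)$ as satisfying the integral formulation of Definition \ref{solution}, the required regularity of $u$ following from weak lower semicontinuity and from the continuity in time of the integral representation. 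Finally, $\psi\in\partial I_{[0,1]}(u)$ is obtained by a Minty-type argument: from the monotonicity inequality $(\pe(\uhnr)-\xi)(\uhnr-\epsilon\pe(\uhnr)-\zeta)\ge 0$ (valid because $\pe(\uhnr)\in\partial I_{[0,1]}(\uhnr-\epsilon\pe(\uhnr))$), for every pair $\zeta\in[0,1]$, $\xi\in\partial I_{[0,1]}(\zeta)$, one passes to the limit using $\uhnr\to u$ strongly, $\pe(\uhnr)\rightharpoonup\psi$, and $\epsilon_m\pe(\uhnr)\to 0$ strongly, obtaining $\E\int_0^T\int_\Lambda(\psi-\xi)(u-\zeta)\ge 0$ for all such $(\zeta,\xi)$, whence the inclusion by maximal monotonicity of $\partial I_{[0,1]}$.

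Third, the conclusion. Since Problem \eqref{equation} has a unique solution in the sense of Definition \ref{solution}, every subsequence admits a further subsequence converging to the same limit $(u,\psi)$, so the full sequences $(\uhnrm)$ and $(\pe(\uhnrm))$ converge to $u$ (strongly) and to $\psi$ (weakly) in $L^2(\Omega;L^2(0,T;L^2(\Lambda)))$. For the $L^p$-in-time statement, the uniform bound on $\uhnrm$ in $L^\infty(0,T;L^2(\Omega;L^2(\Lambda)))$ from the basic estimate, together with the membership of $u$ in that space, shows that $t\mapsto\|\uhnrm(t)-u(t)\|_{L^2(\Omega;L^2(\Lambda))}$ is bounded uniformly in $m$ and $t$; combined with $\int_0^T\|\uhnrm(t)-u(t)\|_{L^2(\Omega;L^2(\Lambda))}^2\,dt\to 0$, an elementary interpolation (H\"older for $1\le p\le 2$, and the uniform bound together with the $L^1(0,T)$-convergence of the squared norm for $p>2$) yields $\int_0^T\|\uhnrm(t)-u(t)\|_{L^2(\Omega;L^2(\Lambda))}^p\,dt\to 0$ for every finite $p\ge 1$.
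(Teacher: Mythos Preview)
Your outline of the a priori estimates is essentially correct and matches the paper: the basic $L^\infty_tL^2_{\omega,x}\cap L^2_{\omega,t}H^1_x$ bound comes from testing with $u^{n+1}_K$, and the uniform $L^2$ bound on $\pe(\uhnr)$ from testing with $\pe(u^{n+1}_K)$, using convexity of the primitive and monotonicity of $\pe$. One remark: the noise contribution in the second estimate is more delicate than you indicate. A direct $1/\epsilon$-Lipschitz bound would indeed produce a prefactor $\dt/\epsilon^2$, but this only yields $\dt=\mathcal{O}(\epsilon^2)$; the strict $+\theta$ in the hypothesis comes from the paper's finer treatment via the identity $g\cdot\pe=0$, the mean-value theorem, and higher moments $\E[(W^{n+1}-W^n)^{2p}]=C_p\dt^p$, which leads to a factor $\dt^{p-1}/\epsilon^{2p}$ to be made small for some $p$.

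There is, however, a genuine gap in your second step. You write that strong convergence of $\uhnr$ in $L^2(\Omega;L^2(0,T;L^2(\Lambda)))$ follows ``via a compactness argument combining the uniform discrete $H^1$-bound with an estimate on time translates''. This is the standard deterministic Aubin--Lions route, but it does \emph{not} carry over to the stochastic setting as stated: the discrete $H^1$ and time-translate bounds give compactness only in $L^2((0,T)\times\Lambda)$ for fixed $\omega$, and there is no compact embedding into $L^2(\Omega;\cdot)$. Without an additional device (Skorokhod representation followed by a martingale identification argument, or a Gy\"ongy--Krylov type lemma), you cannot conclude strong convergence in $L^2(\Omega;L^2(0,T;L^2(\Lambda)))$ from these estimates alone. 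Since your subsequent identification of $g_u$, $\beta_u$, and the Minty argument for $\psi$ all rest on this strong convergence, the whole identification step is unjustified.

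The paper avoids compactness entirely and proceeds by a monotonicity/energy method. One first extracts only \emph{weak} limits $u$, $\psi$, $g_u$, $\beta_u$, passes to the limit in the scheme tested against smooth functions to obtain the limit equation with these unidentified nonlinearities, and then applies the It\^o energy equality to the limit process. In parallel, one derives from the scheme a discrete energy inequality weighted by $e^{-cs}$ (with $c$ chosen large depending on $L_g,L_\beta$), whose $\limsup$ is compared with the continuous energy; this, together with a lower-semicontinuity estimate on the discrete gradient seminorm and an explicit computation of the limit of $\int e^{-cs}\E[\pe(\uhnr)\uhnr]$ in terms of the weak limits $\psi_1,\psi_2$ of $-\tfrac{(\uhnr)^-}{\epsilon}$ and $\tfrac{(\uhnr-1)^+}{\epsilon}$, yields simultaneously $g_u=g(u)$, $\psi\in\partial I_{[0,1]}(u)$ (via the sign structure $\psi_2-\psi u=(1-u)\psi_2-\psi_1 u\ge 0$), and $\limsup\int e^{-ct}\E\|\uhnr(t)\|^2\le\int e^{-ct}\E\|u(t)\|^2$, which upgrades weak to strong convergence. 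Your final step ($L^p(0,T)$ convergence via the $L^\infty_tL^2_{\omega,x}$ bound and interpolation) is fine and is essentially what the paper does.
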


\subsection{Outline}

This contribution is organized as follows. In Section \ref{sectiontwo}, the discretization framework is introduced: choice of the time step, definition of admissible finite-volume meshes of $\Lambda$, related notations and employed discrete norms. Then in Section \ref{sectiontwobis}, the semi-implicit TPFA scheme for the discretization of the regularized Problem (\ref{eqeps}) and its associated discrete solutions are defined, and the well-posedness of such a scheme is investigated. In Section \ref{estimates}, a clever relation between the regularization and the time and space discretization parameters (denoted $\epsilon, \dt$ and $h$, respectively), allows us to derive stability estimates satisfied by the discrete solutions, independently of these three parameters. Section \ref{ConvFVscheme} is then dedicated to the convergence analysis of our scheme by combining arguments we developed in \cite{BSZ23} for the passage to the limit with respect to $\dt$ and $h$, with the ones used in \cite{BBBLV} to pass to the limit with respect to $\epsilon$.

\section{Discretization framework}\label{sectiontwo}

Let us start this section by some general notations, then Subsections \ref{mesh}, \ref{discretenotation}, \ref{dnadg} contain all the definitions and notations related to temporal and spatial discretizations. Let us mention that they are the same as in our previous papers \cite{BNSZ22,BNSZ23,BSZ23}, but for a matter of self-containedness we choose to repeat them identically.

\subsection{General notations}

\begin{itemize}
\item[$\bullet$] The integral over $\Omega$ with respect to the probability measure $\mathds{P}$ is denoted by $\E[\cdot]$, and is called the expectation.
\item[$\bullet$] For any $x,y$ in $\R^d$, the euclidean norm of $x$ is denoted by $|x|$, and the associated scalar product of $x$ and $y$ by $x\cdot y$.
\item[$\bullet$] The $d$-dimensional Lebesgue measure of $\Lambda$ is denoted by $|\Lambda|$, by overusing the euclidean norm notation.
\item[$\bullet$] For $q\in\{1,d\}$, the $L^\infty(\R^q)$ norm is denoted by $||\cdot||_\infty$.
\end{itemize}

\subsection{Uniform time step and admissible finite-volume meshes}\label{mesh}
With the aim of proposing a time and space approximation of the variational solution of Problem (\ref{eqeps}), a choice for the temporal and spatial discretizations must be made. The temporal one is achieved using a uniform subdivision: setting $N\in \mathbb{N}^{\star}$, the fixed time step is defined by $\dt=\frac{T}{N}$ and the interval $[0,T]$ is decomposed in $0=t_0<t_1<...<t_N=T$ equidistantly with $\tn=n \dt$ for all $n\in \{0, ..., N-1\}$. For the spatial one, following \cite[Definition 9.1]{EymardGallouetHerbinBook}, we consider admissible finite-volume meshes as defined hereafter: 
\begin{defi} (Admissible finite-volume mesh)\label{defmesh} 
An admissible finite-volume mesh of $\Lambda$, denoted by $\Tau$, is given by a family of \enquote{control volumes}, which are open polygonal convex subsets of $\Lambda$, a family of subsets of $\overline{\Lambda}$ contained in hyperplanes of $\R^d$, denoted by $\mathcal{E}$ (these are the edges for $d=2$ or sides for $d=3$ of the control volumes), with strictly positive $(d-1)$-dimensional Lebesgue measure, and a family of points of $\Lambda$ denoted by $\mathcal{P}$ satisfying the following properties\footnotemark[1]\footnotetext[1]{In fact, we shall denote, somewhat incorrectly, by $\mathcal{T}$ the family of control volumes.}
\begin{itemize}
\item $\overline{\Lambda}=\bigcup_{K\in\Tau}\overline{K}$.
\item For any $K\in\mathcal{T}$, there exists a subset $\mathcal{E}_K$ of $\mathcal{E}$ such that $\partial K=\overline{K}\setminus K=\bigcup_{\sigma\in \mathcal{E}_K}\overline{\sigma}$ and $\mathcal{E}=\bigcup_{K\in\mathcal{T}}\mathcal{E}_K$. $\mathcal{E}_K$ is called the set of edges of $K$ for $d=2$ and sides for $d=3$, respectively.
\item For any $K,L\in\Tau$, with $K\neq L$ then either the $(d-1)$ Lebesgue measure of $\overline{K}\cap \overline{L}$ is $0$ or $\overline{K}\cap \overline{L}=\overline{\sigma}$ for some $\sigma\in\mathcal{E}$, which will then be denoted by $K|L$ or $L|K$.
\item The family $\mathcal{P}=(x_K)_{K\in\mathcal{T}}$ is such that $x_K\in \overline{K}$ for all $K\in\mathcal{T}$ and, if $K,L\in\mathcal{T}$ are two neighbouring control volumes, it is assumed that $x_K\neq x_L$, and that the straight line between $x_K$ and $x_L$ is orthogonal to $\sigma=K|L$.
\item For any $\sigma\in\mathcal{E}$ such that $\sigma\subset\partial\Lambda$, let $K$ be the control volume such that $\sigma \in\mathcal{E}_K$. If $x_K\notin\sigma$, the straight line going through $x_K$ and orthogonal to $\sigma$ has a nonempty intersection with $\sigma$.
\end{itemize}
\end{defi}
\pagebreak
\begin{figure}[htbp!]
\centering
\begin{tikzpicture}[scale=2]

  \clip (-1.2,-0.6) rectangle (1.8,1.3);

  \node[rectangle,fill] (A) at (-1,0.6) {};
  \node[rectangle,fill] (B) at (0,1.2) {};
  \node[rectangle,fill] (C) at (0,-0.2) {};
  \node[rectangle,fill] (D) at (1.5,0.3) {};

  \centre[above right]{(-0.6,0.5)}{xK}{$x_K$};
  \centre[above left]{(0.9,0.5)}{xL}{$x_L$};
  
  \draw[thick] (B)--(C) node [pos=0.7,right] {$\sigma=$\small{$K|L$}};

  \draw[thin,opacity=0.5] (A) -- (B) -- (C) -- (A) ;
  \draw[thin,opacity=0.5] (D) -- (B) -- (C) -- (D);

  \draw[dashed] (xK) -- (xL);
  
  \coordinate (KK) at ($(xK)!0.65!-90:(xL)$);
  \coordinate (LL) at ($(xL)!0.65!90:(xK)$);

  \draw[dotted,thin] (xK) -- (KK);
  \draw[dotted,thin] (xL) -- (LL);

  \draw[|<->|] (KK) -- (LL) node [midway,fill=white,sloped] {$\dkl$};
 
  \coordinate (KAB) at ($(A)!(xK)!(B)$);
  \coordinate (KAC) at ($(A)!(xK)!(C)$);

  \coordinate (LDB) at ($(D)!(xL)!(B)$);
  \coordinate (LDC) at ($(D)!(xL)!(C)$);

  \draw[dashed] (xK) -- ($(xK)!3!(KAB)$);
  \draw[dashed] (xK) -- ($(xK)!3!(KAC)$);

  \draw[dashed] (xL) -- ($(xL)!3!(LDB)$);
  \draw[dashed] (xL) -- ($(xL)!3!(LDC)$);

  \begin{scope}[on background layer]   
    \draw (0,0.5) rectangle ++ (0.1,-0.1);
  \end{scope}
  
  \coordinate (nkl) at ($(B)!0.35!(C)$);
  \draw[->,>=latex] (nkl) -- ($(nkl)!0.3cm!90:(C)$) node[above] {$\mathbf{n}_{K,\sigma}$};
\end{tikzpicture}
\caption{Notations of the mesh $\mathcal T$ associated with $\Lambda\subset\mathbb{R}^2$\label{fig:notation_mesh}}
\end{figure}
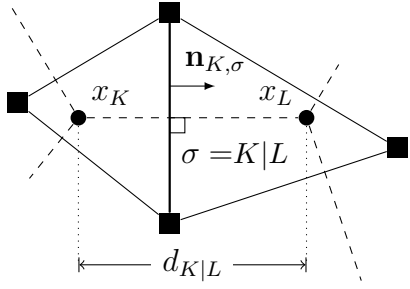
For a given admissible finite-volume mesh $\Tau$ of $\Lambda$, the following associated notations will be used in the rest of the paper.\\

\textbf{Notations.}
\begin{itemize}

\item The mesh size is denoted by $h=\operatorname{size}(\Tau)=\sup\{\operatorname{diam}(K): K\in\Tau\}$.

\item The number of control volumes $K\in\Tau$ is denoted by $d_h\in\mathbb{N}$, where $h=\operatorname{size}(\Tau)$.

\item The sets of interior and exterior interfaces are respectively denoted by\\
$\mathcal{E}_{\operatorname{int}}:=\{\sigma\in\mathcal{E}:\sigma\nsubseteq \partial\Lambda\}$ and $\mathcal{E}_{\operatorname{ext}}:=\{\sigma\in\mathcal{E}:\sigma\subseteq \partial\Lambda\}$.

\item For any $K\in\Tau$, the $d$-dimensional Lebesgue measure of $K$ is denoted by $m_K$.

\item For any $K\in\Tau$, the unit normal vector to $\partial K$ outward to $K$ is denoted by $\mathbf{n}_K$.

\item For any $K\in\Tau$ and any $\sigma \in \mathcal{E}_K$, the unit vector on $\sigma$ pointing out of $K$ is denoted by $\mathbf{n}_{K,\sigma}$.

\item For any $\sigma\in\edgesint$, the $(d-1)$-dimensional Lebesgue measure of $\sigma$ is denoted by $m_\sigma$.

\item For any neighboring control volumes $K,L\in\Tau$, the euclidean distance between $x_K$ and $x_L$ is denoted by $d_{K|L}$.

\item The maximum of edges incident to any vertex of the mesh is denoted by $\mathcal N$.

\item For any $K\in \Tau$ and any $\sigma\in \mathcal{E}_K$, the Euclidean distance between $x_K$ and $\sigma$ is denoted by $d(x_K,\sigma)$.
\end{itemize}

The regularity of the mesh $\Tau$ is measured by the following positive number 
\begin{align*}\label{mrp}
\reg=\max\left(\mathcal N,\max_{\scriptscriptstyle K \in\Tau \atop \sigma\in\mathcal{E}_K} \frac{\operatorname{diam}(K)}{d(x_K,\sigma)}\right).
\end{align*}
As in the deterministic setting, it is assumed that $\reg$ is uniformly bounded by a constant not depending on the mesh size $h$, which is one of the key point to prove the convergence of our finite-volume scheme. Indeed, the introduction of the number $\reg$ allows us particularly to derive the following inequality:

\begin{equation}\label{hoverdkl}
\forall K,L\in \Tau, \frac{h}{\dkl}\leq \reg,
\end{equation}
and such a uniform control of the ratio in the left-hand side of (\ref{hoverdkl}) will be essential in the proof of Lemma \ref{keylemma}.

\subsection{Discrete unknowns and piecewise constant functions }\label{discretenotation}
From here to the end of Section \ref{estimates}, let $N$ be a positive integer, $\dt=\frac{T}{N}$ and $\Tau$ be an admissible finite-volume mesh of $\Lambda$ in the sense of Definition \ref{defmesh} with a mesh size $h>0$. The ideology of a finite-volume method to approximate the variational solution of Problem \eqref{eqeps} is to associate to each control volume $K\in\Tau$ and time $t_n\in \{0, ..., t_N\}$ a discrete unknown value denoted by $\unk\in \mathbb{R}$, expected to be an approximation of $u_\epsilon(t_n,x_K)$.\\
 
For a given vector $(w_K^n)_{K\in\Tau}\in\re^{d_h}$, we introduce in what follows various associated functions. Firstly, we define the piecewise constant function in space $w_h^n:\Lambda\rightarrow \mathbb{R}$ by
\[
w_h^n(x):=\sum_{K\in\Tau} w^n_K \mathds{1}_K(x),\ \forall x\in \Lambda.
\]
Using the fact that the mesh $\Tau$ is fixed, the continuous mapping defined from $ \mathbb{R}^{d_h}$ to $L^2(\Lambda)$ by
\[(w^n_K)_{{K\in\Tau}} 
\mapsto \sum_{K\in\Tau}\mathds{1}_K w^n_K, \]
allows us to consider the space $\re^{d_h}$ as a finite-dimensional subspace of $L^2(\Lambda)$ and to do the following natural identification between the function and the vector
\[
w^n_h\equiv(w^n_K)_{K\in\Tau}\in \mathbb{R}^{d_h}.
\]
Secondly, the knowledge for any $n \in\{0,\ldots,N\}$ of the function $w_h^n$ enables us to define the following right and left piecewise constant functions in time and space denoted respectively by $\whnr$ and $\whnl$
by
\begin{equation}\label{eq:notation_wh}
\begin{aligned}
\whnr(t,x):=&\sum_{n=0}^{N-1} w_h^{n+1}(x)\mathds{1}_{[t_n,t_{n+1})}(t)\text{ if }t\in[0,T)
\text{ and } \whnr(T,x):=w_h^N(x),\\
\whnl(t,x):=&\sum_{n=0}^{N-1} w_h^n(x)\mathds{1}_{[t_n,t_{n+1})}(t) \text{ if }t\in(0,T]\text{ and }
\whnl(0,x):=w_h^0(x).
\end{aligned}
\end{equation}

Since $\Tau$ and $N$ are fixed, reasoning as for the piecewise constant function in space above, the continuity of the mapping defined from $\mathbb{R}^{d_h\times N}$ to $L^2(0,T;L^2(\Lambda))$ by
\[(w_K^n)_{\substack{K\in\Tau \\ n\in\{0,\ldots,N-1\}}}\mapsto\sum_{\substack{K\in\Tau \\ n\in\{0,\ldots,N-1\}}}\mathds{1}_K\mathds{1}_{[t_n,t_{n+1})}w_K^n,\]
allows us to consider the space $\mathbb{R}^{d_h\times N}$ as a finite-dimensional subspace of $L^2(0,T;L^2(\Lambda))$ and to do naturally the identifications 
\begin{align*}
\whnl&\equiv(w_K^n)_{\substack{K\in\Tau \\ n\in\{0,\ldots,N-1\}}}\in \mathbb{R}^{d_h\times N},\\
\whnr&\equiv(w_K^{n+1})_{\substack{K\in\Tau \\ n\in\{0,\ldots,N-1\}}}\in \mathbb{R}^{d_h\times N}.
\end{align*}
\begin{remark}
In the following, when a time and space function $\phi:[0,T]\times \Lambda\rightarrow \mathbb{R}$ will be considered for fixed $x\in \Lambda$, the space variable will be omitted in the notations and $\phi(x)$ will be written instead of $\phi(\cdot,x)$. An analogous notation will apply for fixed $t\in [0,T]$, \textit{i.e.}, we will write $\phi(t)$ for $\phi(t,\cdot)$. 
\end{remark}

\subsection{Discrete norms and weak gradient}\label{dnadg}

For the remainder of this subsection, let us set $n\in\{0, ..., N-1\}$, consider an arbitrary vector $(w^n_K)_{K\in\Tau}\in \mathbb{R}^{d_h}$ and identify it with the piecewise constant function in space $w^n_h\equiv(w^n_K)_{K\in\Tau}$. In the following, we introduce the definitions of the discrete $L^2(\Lambda)$-norm, the weak gradient and the discrete $H^1(\Lambda)$-semi-norm for $w^n_h$.

\begin{defi}[Discrete $L^2(\Lambda)$-norm]The discrete $L^2(\Lambda)$-norm of $w^n_h \in\re^{d_h}$ is defined by
\[
||w^n_h||_{L^2(\Lambda)}=\left(\sum_{K\in \Tau}m_K |w^n_K|^2\right)^\frac12.
\]
\end{defi}

\begin{defi}[Weak gradient]
Let $e_h$ be the number of elements in $\mathcal{E}$. The weak gradient operator $\nabla^h : \re^{d_h}\rightarrow (\re^d)^{e_h}$ maps any scalar fields $w^n_h\in\re^{d_h}$ into vector fields $\nabla^h w^n_h=(\nabla_\sigma^h w^n_h)_{\sigma\in\edges}\in (\re^d)^{e_h}$, where for any $\sigma\in\edges$, $\nabla_\sigma^h w^n_h\in \R^d$ is defined by
\[
\nabla_\sigma^h w^n_h :=
\left\{
\begin{aligned}
d\frac{w^n_L-w^n_K}{\dkl} \mathbf{n}_{K,\sigma}, \quad
&\text{ if }\sigma=K|L\in\edgesint ; \\
\qquad 0, \qquad\qquad &\text{ if } \sigma\in\edgesext.
\end{aligned}
\right.
\]
\end{defi}

\begin{defi}[Discrete $H^1(\Lambda)$-semi-norm]
The discrete $H^1(\Lambda)$-semi-norm of $w^n_h \in\re^{d_h}$ is defined by
 \[
|w^n_h|_{1,h}:=\left(\sum_{\sigma=K|L\in\edgesint}\frac{m_\sigma}{\dkl}|w^n_K-w^n_L|^2\right)^\halbe.
 \]
\end{defi}

\begin{remark}\label{remarkforuhnrboundiii} 
We have the following relation between the discrete $(L^2(\Lambda))^d$-norm of $\nabla^h w^n_h$ and the discrete $H^1(\Lambda)$-semi-norm of $w^n_h$:
\begin{align}\label{linkgradsnh1}
\|\nabla^h w^n_{h}\|_{(L^2(\Lambda))^d}^2
=\sum_{\sigma=K|L\in\edgesint}\frac{d_{K|L}m_\sigma}{d}\left|d\frac{w^{n}_K-w^{n}_L}{d_{K|L}}\right|^2=d|w^n_{h}|_{1,h}^2.
\end{align}
\end{remark}
We end this subsection by recalling a classical trick of sum reordering, which will be used several times in the rest of the paper.
\begin{remark}[Discrete partial integration]\label{discrpartint}
For any $\widetilde w^n_h\equiv(\widetilde{w}^n_K)_{K\in\Tau}\in \mathbb{R}^{d_h}$, the following rule of "discrete partial integration" holds:
\begin{align}\label{PInt}
\sum_{K\in\Tau}\sum_{\sigma=K|L\in\edges_K\cap\edgesint}\frac{m_\sigma}{\dkl}(w^n_K-w^n_L)\widetilde w^n_K
=\sum_{\sigma=K|L\in\edgesint}\frac{m_\sigma}{\dkl}(w^n_K-w^n_L)(\widetilde w^n_K-\widetilde w^n_L).
\end{align}
\end{remark}

Now, we have all the necessary definitions and notations to present the finite-volume scheme studied in this paper. This is the aim of the next section.

\section{The semi-implicit TPFA scheme for Problem (\ref{eqeps})}\label{sectiontwobis}
By the discretization of the initial condition $u_0$ of Problem \eqref{equation} over each control volume:
\begin{align}
\label{eq:def_u0}
u_K^0:=\frac{1}{m_K}\int_K u_0(x)\,dx, \quad \forall K\in \Tau,
\end{align}
we are firstly allowed to define the random vector $u_h^0\equiv (u^0_K)_{K\in\Tau} \in \re^{d_h}$.
Secondly, starting from this given initial $\mathcal{F}_0$-measurable random vector $u_h^0\in\re^{d_h}$, and fixing a parameter $\epsilon>0$, we construct our semi-implicit TPFA scheme as follows:\\
\quad\\
For any $n \in \{0,\dots,N-1\}$, knowing $u_h^n\equiv (u^{n}_K)_{K\in\Tau} \in \re^{d_h}$, we search for $u_h^{n+1}\equiv(u^{n+1}_K)_{K\in\Tau}\in\re^{d_h}$, 
solution of the following equations, $\mathbb{P}$-a.s in $\Omega$:
\begin{align}\label{equationapprox}
\begin{split}
&\frac{m_K}{\dt}(u_K^{n+1}-\unk) +\sum_{\sigma=K|L\in\edgesint\cap\edges_K}\frac{m_\sigma}{\dkl}(u_K^{n+1}-\unpl)+m_K\pe(\unpk)\\
&=\frac{m_K}{\dt}g(\unk)(W^{n+1}-W^n)+m_K\beta(\unpk)+m_K\fnk,\quad \forall K\in \Tau,
\end{split}
\end{align}
where $W^{n+1}-W^n$ denotes the increments of the Brownian motion between $t_{n+1}$ and $t_n$:
\[
W^{n+1}-W^n=W(t_{n+1})-W(t_n)\text{ for }n\in\{0,\dots,N-1\},
\]
and $\fnk$ is defined by
\begin{eqnarray}\label{deffnk}
\fnk=\frac{1}{\dt\, m_K}\int_{\tn}^{\tnp}\int_K f(x,t)\,dx\,dt.
\end{eqnarray}
\begin{remark} Although for any $n\in \{1, ..., N\}$ and any $K\in\Tau$, the discrete unknowns $u^n_K$ (and then the discrete solution $u_h^n$) depend on $\epsilon$, we omit this dependency from the notation for the sake of clarity.
\end{remark}
\begin{prop}[Well-posedness of the scheme]
\label{210609_prop1}
 Let $\Tau$ be an admissible finite-volume mesh of $\Lambda$ in the sense of Definition \ref{defmesh} with a mesh size $h$, let $N$ be a positive integer and let $\epsilon\in\R_+^\star$ be a given parameter. Then, under Assumptions $\mathscr{A}_1$ to $\mathscr{A}_4$, there exists a unique solution $(u_h^n)_{1\le n \le N} \in (\re^{d_h})^N$ to Problem~\eqref{equationapprox} associated with the initial vector $u^0_h$ defined by~\eqref{eq:def_u0}. Moreover, for any $n\in \{0,\ldots,N\}$, $u_h^n$ is a $\mathcal{F}_{t_n}$-measurable random vector.
\end{prop}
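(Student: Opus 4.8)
The statement is a well-posedness result for the nonlinear algebraic system \eqref{equationapprox}, solved inductively in $n$. The plan is to fix $n$, assume $u_h^n$ is known and $\mathcal{F}_{t_n}$-measurable, and solve for $u_h^{n+1}$. First I would rewrite \eqref{equationapprox} as a fixed-point/zero-finding problem in $\R^{d_h}$: define a map $F_n:\R^{d_h}\to\R^{d_h}$ whose $K$-th component is the difference between the left- and right-hand sides of \eqref{equationapprox}, so that $u_h^{n+1}$ is a solution iff $F_n(u_h^{n+1})=0$. Note that the stochastic term $\frac{m_K}{\Delta t}g(\unk)(W^{n+1}-W^n)$ and the source $m_K\fnk$ are \emph{data} at this stage (they depend only on $u_h^n$, $f$, and the Brownian increment, all known), so $F_n$ is, for $\mathbb{P}$-a.e.\ fixed $\omega$, a genuine deterministic map on $\R^{d_h}$.

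\textbf{Existence.} The key structural observation is that $F_n$ is the sum of a linear strongly monotone part and a monotone Lipschitz perturbation. Writing $v=u_h^{n+1}$, the term $\frac{m_K}{\Delta t}v_K$ gives a diagonal positive-definite contribution; the diffusion term $\sum_{\sigma=K|L}\frac{m_\sigma}{\dkl}(v_K-v_L)$ is, by the discrete partial integration rule of Remark~\ref{discrpartint}, the gradient of the convex quadratic $\frac12|v_h|_{1,h}^2$ up to reindexing, hence monotone; the Moreau–Yosida term $v\mapsto \pe(v_K)$ is monotone (nondecreasing) by \eqref{SPPR}; and the reaction term $-m_K\beta(v_K)$ is Lipschitz with constant controlled by $L_\beta$. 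Thus $\langle F_n(v)-F_n(w),v-w\rangle \geq (\frac{m_{\min}}{\Delta t}-m_{\max}L_\beta)|v-w|^2$ on $\R^{d_h}$; this is where a smallness condition on $\Delta t$ (relative to $L_\beta$ and the mesh) would normally be invoked — but one can avoid it entirely by instead observing that $\langle F_n(v),v\rangle\to+\infty$ as $|v|\to\infty$ (the quadratic and diagonal terms dominate the at-most-linearly-growing $\beta$ and $\pe$ terms, since $\pe$ satisfies $\pe(r)r\geq 0$), so that $F_n$ is coercive; since $F_n$ is continuous (each of $\pe,\beta,g$ is continuous), a standard corollary of Brouwer's fixed point theorem (e.g.\ \cite[Lemma~4.3, Ch.~II]{Lions-quelques} type argument, or the topological degree) yields existence of a zero. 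Alternatively, since $\beta$ may be moved to the left with a sign that could break monotonicity, the cleanest route is: $F_n = A + B$ where $A$ is continuous, strongly monotone, coercive (the diagonal$+$diffusion$+\pe$ part) and $B(v)_K = -m_K\beta(v_K)$ is Lipschitz; then $F_n(v)=y$ is equivalent to the fixed-point equation $v = A^{-1}(y - B(v))$, and one checks this is contractive in the $\|\cdot\|_{L^2(\Lambda)}$ discrete norm provided $\Delta t$ is small — or, again, simply use coercivity of $F_n$ which survives the Lipschitz perturbation because the quadratic growth beats linear growth, giving existence with no step-size restriction.

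\textbf{Uniqueness.} For uniqueness I would \emph{not} rely on a smallness condition. Suppose $v,w$ both solve $F_n(\cdot)=0$ with the same data. Subtract the two systems, multiply the $K$-th equation by $(v_K-w_K)$, and sum over $K\in\Tau$. The diagonal term gives $\frac{1}{\Delta t}\|v_h-w_h\|_{L^2(\Lambda)}^2$; the diffusion term gives $|v_h-w_h|_{1,h}^2\geq 0$ after discrete partial integration; the Moreau–Yosida term gives $\sum_K m_K(\pe(v_K)-\pe(w_K))(v_K-w_K)\geq 0$ by monotonicity of $\pe$; and the reaction term contributes $-\sum_K m_K(\beta(v_K)-\beta(w_K))(v_K-w_K)$, bounded below by $-L_\beta\|v_h-w_h\|_{L^2(\Lambda)}^2$. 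The data terms cancel. This yields $(\frac{1}{\Delta t}-L_\beta)\|v_h-w_h\|_{L^2(\Lambda)}^2 \leq 0$, which forces $v_h=w_h$ \emph{only} under $\Delta t< 1/L_\beta$. To remove this, I would instead exploit that under the hypothesis $\Delta t=\mathcal{O}(\epsilon^{2+\theta})$ — wait, that is not available here since this proposition is stated without it; so the honest statement is that either one imposes $\Delta t L_\beta<1$ (harmless, since the main theorem sends $\Delta t\to0$ anyway and this can be folded into the admissibility of the mesh/time-step family), or one notes $\beta$ Lipschitz plus $\pe$ monotone still gives strong monotonicity of $v\mapsto \frac{1}{\Delta t}v + (\text{diffusion}) + \pe(v) - \beta(v)$ once $\Delta t$ is small; in either case uniqueness follows from the displayed energy inequality. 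The main obstacle, then, is purely bookkeeping: one must present the existence argument so that it does not secretly require more than what uniqueness requires, and handle the Lipschitz-but-not-monotone reaction term $\beta$ cleanly — the coercivity route for existence (quadratic beats linear) is robust, and a single small-$\Delta t$ hypothesis (or its absorption into the coercivity constant) settles uniqueness.

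\textbf{Measurability.} Finally, the $\mathcal{F}_{t_n}$-measurability of $u_h^n$ is proved by induction: $u_h^0$ is $\mathcal{F}_0$-measurable by $\mathscr{A}_1$ and \eqref{eq:def_u0}; assuming $u_h^n$ is $\mathcal{F}_{t_n}$-measurable, the data of the system for $u_h^{n+1}$ are built from $u_h^n$, from $\fnk$ (deterministic, or $\mathcal{F}_{t_{n+1}}$-adapted via $\mathscr{A}_4$ — here it is an average of $f$, which is predictable, so it is $\mathcal{F}_{t_{n+1}}$-measurable), and from the increment $W^{n+1}-W^n$, which is $\mathcal{F}_{t_{n+1}}$-measurable. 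Since the solution map data$\,\mapsto u_h^{n+1}$ is continuous — this follows from the implicit function theorem applied to the strongly monotone $C^0$ map $F_n$, or more elementarily from the contraction estimate, which shows $u_h^{n+1}$ depends Lipschitz-continuously on the data — the composition is $\mathcal{F}_{t_{n+1}}$-measurable. (One should remark that $g$ has support in $[0,1]$ by $\mathscr{A}_2$, so the stochastic term is automatically bounded, but this is not needed for well-posedness, only later for estimates.) This closes the induction and the proof.
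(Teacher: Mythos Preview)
The paper's proof is a one-line citation of a well-posedness result from \cite{BNSZ23}, applied with source term $u\mapsto \beta(u)-\pe(u)+f$; your approach is instead a self-contained monotonicity/coercivity argument, which is reasonable and is presumably what the cited result does under the hood. Your main line---strong monotonicity of $F_n$ in the weighted $L^2$-norm under $\Delta t L_\beta<1$, giving existence, uniqueness, and Lipschitz dependence on the data, hence $\mathcal{F}_{t_{n+1}}$-measurability of $u_h^{n+1}$ by induction---is correct and matches how such schemes are handled.

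There is, however, a slip in your attempt to \emph{avoid} the step-size restriction for existence. You claim $\langle F_n(v),v\rangle\to+\infty$ because ``the quadratic and diagonal terms dominate the at-most-linearly-growing $\beta$ terms''. But in $\langle F_n(v),v\rangle$ the reaction contributes $-\sum_K m_K\beta(v_K)v_K$, which is of order $L_\beta\|v_h\|_{L^2(\Lambda)}^2$---quadratic in $v$, not linear---and competes directly with $\frac{1}{\Delta t}\|v_h\|_{L^2(\Lambda)}^2$. A one-cell counterexample with $\beta(r)=L_\beta r$ and $v\in[0,1]$ (so $\pe(v)=0$) shows that when $\Delta t L_\beta=1$ the homogeneous problem has a continuum of solutions; so uniqueness genuinely requires $\Delta t L_\beta<1$, since the $\pe$ contribution to $\langle F_n(v)-F_n(w),v-w\rangle$ vanishes whenever both arguments lie in $[0,1]$. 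One can rescue \emph{existence} alone under the milder condition $\frac{1}{\Delta t}+\frac{c}{\epsilon}>L_\beta$ by using $\pe(r)r\gtrsim r^2/\epsilon$ for $|r|$ large, but you did not articulate this, and it does not help with uniqueness. Since the paper itself imposes exactly this kind of restriction in the next result (Proposition~\ref{bounds} assumes $N$ ``large enough depending on $L_\beta$''), the clean fix is to state $\Delta t L_\beta<1$ once and use strong monotonicity for both existence and uniqueness; your measurability argument then goes through unchanged.
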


\begin{proof} It is an adaptation of the main result of \cite{BNSZ23} in the particular case where the convection term is equal to zero and the source term is equal to $u\mapsto\beta(u)-\pe(u)$.
\end{proof}
The right and left finite-volume approximations $\uhnr$ and $\uhnl$ defined by \eqref{eq:notation_wh} to approximate the solution $u_{\epsilon}$ of Problem (\ref{eqeps}) are then built from the discrete solution $(u_h^n)_{1\le n \le N} \in (\re^{d_h})^N$ given by Proposition \ref{210609_prop1}.

\section{Stability estimates}\label{estimates}

In this section, several stability estimates, satisfied by the discrete solution $(u_h^n)_{1 \le n \le N} \in (\re^{d_h})^N$ given by Proposition \ref{210609_prop1}, and also by the associated left and right finite-volume approximations $(\uhnl)_{h,N}$ and $(\uhnr)_{h,N}$ defined by \eqref{eq:notation_wh}, will be derived. Let us start by bounding the discrete initial data: 
\begin{lem}
\label{bound_u0}Under Assumption $\mathscr{A}_1$, the discrete initial data $u_h^0 \in \re^{d_h}$ associated to $u_0$ and defined by~\eqref{eq:def_u0} satisfies the following inequality:
\begin{equation*}
\erwb\|u_h^0\|_{L^2(\Lambda)}^2\erwe \leq \erwb\|u_0\|^2_{L^2(\Lambda)}\erwe.
\end{equation*}
\end{lem}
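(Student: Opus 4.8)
The plan is to exploit the fact that the discrete initial data $u_K^0$ is nothing but the average of $u_0$ over the control volume $K$, so that the inequality is a direct consequence of Jensen's inequality applied cellwise, followed by summation and taking expectations. Concretely, I would proceed as follows.

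First, fix $\omega\in\Omega$ (outside a $\mathbb{P}$-null set) and $K\in\Tau$. By definition \eqref{eq:def_u0}, $u_K^0=\frac{1}{m_K}\int_K u_0(x)\,dx$, which is the mean value of $u_0(\cdot)$ over $K$ with respect to the normalized Lebesgue measure $\frac{1}{m_K}\,dx$ on $K$. Applying Jensen's inequality to the convex function $r\mapsto r^2$ with this probability measure yields
\[
|u_K^0|^2=\left|\frac{1}{m_K}\int_K u_0(x)\,dx\right|^2\leq \frac{1}{m_K}\int_K |u_0(x)|^2\,dx.
\]
Multiplying by $m_K$ gives $m_K|u_K^0|^2\leq \int_K |u_0(x)|^2\,dx$.

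Next, I would sum this inequality over all $K\in\Tau$ and use that $\{K\}_{K\in\Tau}$ is (up to a Lebesgue-null set) a partition of $\Lambda$, so that $\sum_{K\in\Tau}\int_K |u_0(x)|^2\,dx=\int_\Lambda |u_0(x)|^2\,dx$. Recalling the definition of the discrete $L^2(\Lambda)$-norm, this reads $\|u_h^0\|_{L^2(\Lambda)}^2=\sum_{K\in\Tau}m_K|u_K^0|^2\leq \|u_0\|_{L^2(\Lambda)}^2$ $\mathbb{P}$-a.s. in $\Omega$. Finally, taking the expectation $\erw[\cdot]$ on both sides, which preserves the inequality, gives $\erwb\|u_h^0\|_{L^2(\Lambda)}^2\erwe\leq \erwb\|u_0\|_{L^2(\Lambda)}^2\erwe$, as claimed. (The right-hand side is finite by Assumption $\mathscr{A}_1$.)

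There is essentially no obstacle here: the proof is routine, the only points requiring a word of justification being the applicability of Jensen's inequality on each cell (which needs only $u_0(\omega,\cdot)\in L^2(\Lambda)\subset L^1(\Lambda)$ on the bounded domain $\Lambda$, guaranteed by $\mathscr{A}_1$) and the measurability in $\omega$ of the quantities involved, so that the final expectation makes sense — this follows from the fact that $u_0$ is $\mathcal{F}_0$-measurable and the integration against the fixed deterministic weight $\mathds{1}_K/m_K$ preserves measurability.
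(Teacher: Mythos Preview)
Your proof is correct and follows essentially the same approach as the paper, which simply notes that the inequality is a direct consequence of the definition of $u_h^0$ and the Cauchy--Schwarz inequality. Your use of Jensen's inequality for the convex map $r\mapsto r^2$ on each cell is equivalent to applying Cauchy--Schwarz to $\int_K u_0(x)\cdot 1\,dx$, so the two arguments are the same up to naming.
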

\begin{proof} It is a direct consequence of the definition of $u_h^0$ and Cauchy-Schwarz inequality.
\end{proof}
This first lemma allows us to obtain the following first bounds on the discrete solutions:

\begin{prop}[Bounds on the discrete solutions]\label{bounds}
There exists a constant $K_0>0$, depending only on $u_0$, $L_g$, $L_{\beta}$, $f$ and $T$ such that for any $\epsilon>0$, any $N\in\mathbb{N}^{\star}$ large enough (depending on $L_{\beta}$) and any $h\in \R^\star_+$
\begin{align*}
&\erwb \|u_h^n\|_{L^2(\Lambda)}^2 \erwe+\sum_{k=0}^{n-1}\erwb\|u_h^{k+1}-u_h^k\|_{L^2(\Lambda)}^2\erwe+\dt \sum_{k=0}^{n-1}\erww{|u_h^{k+1}|_{1,h}^2}\leq K_0,\; \forall n\in \{1,\ldots,N\}.
\end{align*}
\end{prop}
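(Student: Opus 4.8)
\textbf{Proof strategy for Proposition \ref{bounds}.}

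The plan is to test the scheme \eqref{equationapprox} against $\Delta t\, u_K^{n+1}$, sum over $K\in\Tau$, and then sum the telescoping terms over the time levels $k=0,\ldots,n-1$, finally taking expectations. The algebraic identity $a(a-b)=\frac12(a^2-b^2+(a-b)^2)$ applied to the discrete time derivative term produces $\frac12\bigl(\|u_h^{k+1}\|_{L^2(\Lambda)}^2-\|u_h^k\|_{L^2(\Lambda)}^2+\|u_h^{k+1}-u_h^k\|_{L^2(\Lambda)}^2\bigr)$, which after summation gives exactly the first two quantities on the left-hand side. The diffusion term, after the discrete partial integration of Remark \ref{discrpartint}, becomes $\Delta t\,|u_h^{k+1}|_{1,h}^2$, yielding the third term; note this term has a favourable sign, so it can stay on the left. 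The term involving $\pe$ has the right sign as well since $\pe$ is monotone with $\pe(0)=\pe(1)=0$, hence $\pe(u_K^{k+1})u_K^{k+1}\geq 0$ and it can simply be discarded (bounded below by $0$). The terms with $\beta$ and $f$ are controlled by the Lipschitz/linear-growth bound on $\beta$ (using $\beta(0)=0$), Young's inequality, and the definition \eqref{deffnk} of $\fnk$ together with Jensen; these generate a term $C\Delta t\sum_{k}\erww{\|u_h^{k+1}\|_{L^2(\Lambda)}^2}$ plus $\erww{\|f\|^2}$, where the $\beta$-contribution is absorbed into the left-hand side provided $N$ is large enough that $C\Delta t<1$ (this is the ``$N$ large enough depending on $L_\beta$'' hypothesis).

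The genuinely stochastic step is the treatment of the noise term $m_K g(\unk)(W^{k+1}-W^k)$ tested against $\Delta t\, u_K^{k+1}$. Here one splits $u_K^{k+1}=u_K^k+(u_K^{k+1}-u_K^k)$. The contribution of $u_K^k$ is a martingale increment: since $u_h^k$ and $g(u_h^k)$ are $\mathcal{F}_{t_k}$-measurable (Proposition \ref{210609_prop1}) while $W^{k+1}-W^k$ is independent of $\mathcal{F}_{t_k}$ with mean zero, its expectation vanishes. The contribution of the increment $u_K^{k+1}-u_K^k$ is estimated by Cauchy--Schwarz and Young: $\erww{\sum_K m_K g(\unk)(W^{k+1}-W^k)(u_K^{k+1}-u_K^k)}\leq \frac14\erww{\|u_h^{k+1}-u_h^k\|_{L^2(\Lambda)}^2}+\erww{\|g(u_h^k)\|_{L^2(\Lambda)}^2}\,\erww{|W^{k+1}-W^k|^2}$, using independence to factor the last expectation, and $\erww{|W^{k+1}-W^k|^2}=\Delta t$. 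The $\frac14\|u_h^{k+1}-u_h^k\|^2$ piece is absorbed into the $\frac12\|u_h^{k+1}-u_h^k\|^2$ on the left, and using $\|g(u_h^k)\|_{L^2(\Lambda)}^2\le 2L_g^2\|u_h^k\|_{L^2(\Lambda)}^2+2|\Lambda|\,\|g(0)\|_\infty^2$ (or just $|\Lambda|\|g\|_\infty^2$ since $g$ has compact support) this produces another $C\Delta t\sum_k\erww{\|u_h^k\|_{L^2(\Lambda)}^2}$ term.

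Collecting everything, one arrives at an inequality of the form
\begin{align*}
\erww{\|u_h^n\|_{L^2(\Lambda)}^2}+\tfrac14\sum_{k=0}^{n-1}\erww{\|u_h^{k+1}-u_h^k\|_{L^2(\Lambda)}^2}+\Delta t\sum_{k=0}^{n-1}\erww{|u_h^{k+1}|_{1,h}^2}\leq C_1+C_2\,\Delta t\sum_{k=0}^{n}\erww{\|u_h^k\|_{L^2(\Lambda)}^2},
\end{align*}
where $C_1$ depends only on $\erww{\|u_0\|_{L^2(\Lambda)}^2}$ (via Lemma \ref{bound_u0}), $L_g$, $|\Lambda|$, $\|g\|_\infty$, $\erww{\|f\|^2}$ and $T$, and $C_2$ depends on $L_g,L_\beta$. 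The term with $k=n$ on the right is absorbed into the left for $\Delta t$ small (i.e. $N$ large), and then the discrete Gronwall lemma applied to $a_n:=\erww{\|u_h^n\|_{L^2(\Lambda)}^2}$ gives $a_n\le C_1 e^{C_2 T}$ uniformly in $n$; re-inserting this bound into the displayed inequality bounds the two sums as well, which yields the claim with $K_0:=C_1 e^{C_2 T}(1+C_2 T)$ or similar. The main obstacle, and the only place where care is genuinely needed, is the bookkeeping of the noise term: one must be sure to expand $u_K^{k+1}$ around the $\mathcal{F}_{t_k}$-measurable $u_K^k$ before taking expectations (otherwise the martingale trick fails), and one must keep the constant in front of $\|u_h^{k+1}-u_h^k\|^2$ strictly below $\frac12$ so that it is absorbable; everything else is routine application of Young's inequality and discrete Gronwall.
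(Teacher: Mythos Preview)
Your proposal is correct and follows essentially the same approach as the paper's own proof: test \eqref{equationapprox} against $u_K^{k+1}$, use the identity $a(a-b)=\tfrac12(a^2-b^2+(a-b)^2)$, discard the $\pe$ term by monotonicity, handle the diffusion via the discrete partial integration \eqref{PInt}, split the noise term around the $\mathcal{F}_{t_k}$-measurable $u_K^k$ so that the martingale part vanishes and the remainder is controlled by Young plus $\E[(W^{k+1}-W^k)^2]=\Delta t$, absorb the $\tfrac14\|u_h^{k+1}-u_h^k\|^2$ into the left, and finish with discrete Gronwall. The only cosmetic differences are that the paper uses $|g(v)|\le L_g|v|$ directly (since $\operatorname{supp} g\subset[0,1]$ and $g$ is continuous forces $g(0)=0$), and it absorbs the implicit $\beta$-contribution by writing $(1-\Delta t(2L_\beta+1))\ge\tfrac14$ rather than isolating the $k=n$ summand; neither changes the argument.
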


\begin{proof}
Set $\epsilon>0$, $N\in\mathbb{N}^{\star}$, $h\in \mathbb{R}_+^\star$ and fix $n\in \{1,\ldots,N\}$. For any $k\in \{0,\ldots,n-1\}$, we multiply the numerical scheme \eqref{equationapprox} with $\ukpk$, take the expectation, and sum over $K\in\Tau$ to obtain thanks to \eqref{PInt}
\begin{align}\label{implicitscheme}
\begin{split}
&\sum_{K\in\Tau}\frac{m_K}{\dt}\erwb(\ukpk-\ukk)\ukpk\erwe
+ \sum_{\sigma=K|L\in\edgesint}\frac{m_\sigma}{\dkl}\erwb|\ukpk-\ukpl|^2\erwe\\
&+\sum_{K\in\Tau}m_K\,\erwb \pe(\ukpk)\ukpk\erwe\\
=\,&\sum_{K\in\Tau}\frac{m_K}{\dt}\erwb g(\ukk)\ukpk\left(W^{k+1}-W^k\right)\erwe+\sum_{K\in\Tau}m_K\,\erwb \big(\beta(\ukpk)+\fkk\big)\ukpk\erwe.
\end{split}
\end{align}
We consider the terms of \eqref{implicitscheme} separately. Firstly note that
\begin{align}\label{term1}
\sum_{K\in\Tau}\frac{m_K}{\dt}\erwb(\ukpk-\ukk)\ukpk\erwe=\halbe\sum_{K\in\Tau}\frac{m_K}{\dt}\erwb|\ukpk|^2-|\ukk|^2+|\ukpk-\ukk|^2\erwe.
\end{align}
Secondly, since $\pe$ is monotone with $\pe(0)=0$, one gets that
\begin{eqnarray}\label{term2}
\sum_{K\in\Tau}m_K\,\erwb \pe(\ukpk)\ukpk\erwe&\geq& 0.
\end{eqnarray}
Thirdly, since $\ukk$ and $\left(W^{k+1}-W^k\right)$ are independent one obtains
\begin{eqnarray*}
\sum_{K\in\Tau}\frac{m_K}{\dt}\erwb g(\ukk)\ukk\left(W^{k+1}-W^k\right)\erwe=0,
\end{eqnarray*}
and so by applying Young's inequality and using the It\^{o} isometry one arrives at
\begin{align}\label{term3}
\begin{split}
&\sum_{K\in\Tau}\frac{m_K}{\dt}\erwb g(\ukk)\ukpk\left(W^{k+1}-W^k\right)\erwe\\
=\,&\sum_{K\in\Tau}\frac{m_K}{\dt}\erwb g(\ukk)(\ukpk-\ukk)\left(W^{k+1}-W^k\right)\erwe\\
\leq\,&\sum_{K\in\Tau}\frac{m_K}{\dt}\erwb|g(\ukk)\left(W^{k+1}-W^k\right)|^2\erwe+\frac{1}{4}\sum_{K\in\Tau}\frac{m_K}{\dt}\erwb|\ukpk-\ukk|^2\erwe\\
\leq\, &\dt L_g^2\sum_{K\in\Tau}\frac{m_K}{\dt}\erwb|\ukk|^2\erwe+\frac{1}{4}\sum_{K\in\Tau}\frac{m_K}{\dt}\erwb|\ukpk-\ukk|^2\erwe.
\end{split}
\end{align}
Fourthly, using the Lipschitz property of $\beta$ with $\beta(0)=0$, the following holds
\begin{eqnarray}\label{term4}
\sum_{K\in\Tau}m_K\,\erwb \beta(\ukpk)\ukpk\erwe&\leq& L_{\beta}\sum_{K\in\Tau}m_K\,\erwb |\ukpk|^2\erwe.
\end{eqnarray}
Fifthly, 
\begin{align}\label{term5}
\hspace*{-0.5cm}\sum_{K\in\Tau}m_K\,\erwb \fkk\ukpk\erwe
\leq& \frac{1}{2}\sum_{K\in\Tau}m_K\,\erwb |\fkk|^2\erwe+ \frac{1}{2}\sum_{K\in\Tau}m_K\,\erwb |\ukpk|^2\erwe\nonumber\\
\leq&\frac{1}{2\dt}\sum_{K\in\Tau} \erwb \int_{t_k}^{t_{k+1}}\int_K|f(x,t)|^2\,dx\,dt\erwe+ \frac{1}{2}\sum_{K\in\Tau}m_K\,\erwb |\ukpk|^2\erwe.
\end{align}
\noindent Combining \eqref{term1}-\eqref{term2}-\eqref{term3}-\eqref{term4} and \eqref{term5} and multiplying the obtained inequality with $2\dt$, one gets
\begin{eqnarray*}
&&\sum_{K\in\Tau}m_K\,\erwb |\ukpk|^2-|\ukk|^2+|\ukpk-\ukk|^2\erwe+ 2\dt\sum_{\sigma=K|L\in\edgesint}\frac{m_\sigma}{\dkl}\erwb|\ukpk-\ukpl|^2\erwe\\
&\leq&2\dt L_g^2\sum_{K\in\Tau}m_K\,\erwb|\ukk|^2\erwe+\frac{1}{2}\sum_{K\in\Tau}m_K\,\erwb|\ukpk-\ukk|^2\erwe\\
&&+\dt (2L_{\beta}+1)\sum_{K\in\Tau}m_K\,\erwb |\ukpk|^2\erwe+\sum_{K\in\Tau} \erwb \int_{t_k}^{t_{k+1}}\int_K|f(x,t)|^2\,dx\,dt\erwe.
\end{eqnarray*}
Then, 
\begin{align*}
&\big(1-\dt(2 L_{\beta}+1)\big)\sum_{K\in\Tau}m_K\,\erwb |\ukpk|^2-|\ukk|^2\erwe
+\frac{1}{2}\sum_{K\in\Tau}m_K\,\erwb|\ukpk-\ukk|^2\erwe\\
&+ 2\dt\sum_{\sigma=K|L\in\edgesint}\frac{m_\sigma}{\dkl}\erwb|\ukpk-\ukpl|^2\erwe\\
\leq& \dt 
(2{L_g}^2+2L_{\beta}+1)\sum_{K\in\Tau}m_K\,\erwb (\ukk)^2\erwe+\sum_{K\in\Tau} \erwb \int_{t_k}^{t_{k+1}}\int_K|f(x,t)|^2\,dx\,dt\erwe.
\end{align*}
For $\dt$ small enough so that $1-\dt(2 L_{\beta}+1)\geq \frac{1}{4}$, after summing over $k\in\{0,\dots,n-1\}$, one arrives at
\begin{align}\label{lem1beschr}
\begin{split}
&\frac{1}{4}\erwb\|u_h^n\|_{L^2(\Lambda)}^2-\|u_h^0\|_{L^2(\Lambda)}^2\erwe+\frac{1}{2}\sum_{k=0}^{n-1}\erww{\|u_h^{k+1}-u_h^k\|_{L^2(\Lambda)}^2}+ 2\dt\sum_{k=0}^{n-1}\erwb|u_h^{k+1}|_{1,h}^2\erwe\\
&\leq \dt (2{L_g}^2+2L_{\beta}+1) \sum_{k=0}^{n-1}\erwb\|u_h^k\|_{L^2(\Lambda)}^2\erwe+\sum_{k=0}^{n-1}\sum_{K\in\Tau} \erwb \int_{t_k}^{t_{k+1}}\int_K|f(x,t)|^2\,dx\,dt\erwe.
\end{split}
\end{align}
Then, it follows that
\begin{align*}
&\erwb\|u_h^n\|_{L^2(\Lambda)}^2\erwe\\
&\leq\erwb\|u_h^0\|_{L^2(\Lambda)}^2\erwe+4\dt(2L_g^2+2L_{\beta}+1)\sum_{k=0}^{n-1}\erwb\|u_h^k\|_{L^2(\Lambda)}^2\erwe+4||f||^2_{L^2\left(\Omega;L^2(0,T;L^2(\Lambda))\right)}.
\end{align*}
Applying the discrete Gronwall lemma yields 
\begin{align}\label{uhnbound}
\erwb\|u_h^n\|_{L^2(\Lambda)}^2\erwe\leq\left(\erwb\|u_h^0\|_{L^2(\Lambda)}^2\erwe+4||f||^2_{L^2\left(\Omega;L^2(0,T;L^2(\Lambda))\right)}\right)e^{4T(2L_g^2+2L_{\beta}+1)}.
\end{align}
From \eqref{uhnbound} and Lemma~\ref{bound_u0} we may conclude that there exists a constant $\Upsilon>0$ such that
\begin{align}\label{uhnboundbis}
\sup_{n\in\{1,\dots,N\}}\erww{\|u_h^n\|_{L^2(\Lambda)}^2}\leq \Upsilon.
\end{align}
Thanks to \eqref{uhnboundbis} one gets that for all $n\in\{1,\ldots N\}$
\begin{align}\label{210819_02}
\dt\sum_{k=0}^{n-1}\erww{\|g(u_h^k)\|_{L^2(\Lambda)}^2}\leq L_g^2\dt\sum_{k=0}^{n-1}\erww{\|u_h^k\|_{L^2(\Lambda)}^2}\leq L_g^2T\Upsilon.
\end{align}
From \eqref{lem1beschr}, Lemma~\ref{bound_u0} and \eqref{uhnboundbis} it now follows that for all $n\in \{1,\ldots, N\}$
\begin{align*}
\begin{split}
&\erwb\|u_h^n\|_{L^2(\Lambda)}^2\erwe+2\sum_{k=0}^{n-1}\erww{\|u_h^{k+1}-u_h^k\|_{L^2(\Lambda)}^2}+ 8\dt\sum_{k=0}^{n-1}\erwb|u_h^{k+1}|_{1,h}^2\erwe\\
&\leq \erwb\|u_0\|_{L^2(\Lambda)}^2\erwe+4\Upsilon T(2L_g^2+2L_{\beta}+1)+4||f||^2_{L^2\left(\Omega;L^2(0,T;L^2(\Lambda))\right)}.
\end{split}
\end{align*}
\end{proof}
We are now interested in the bounds on the right and left finite-volume approximations defined by~\eqref{eq:notation_wh}.

\begin{lem}\label{210611_lem01}
The sequences $(\uhnr)_{\epsilon,h,N}$ and $(\uhnl)_{\epsilon,h,N}$ are bounded in $L^\infty(0,T;L^2(\Omega;L^2(\Lambda)))$, independently of the regularization and discretization parameters $\epsilon\in \mathbb{R}_+^\star$, $N\in\mathbb{N}^{\star}$ and $h\in \mathbb{R}_+^\star$. Additionally, $(\uhnl)_{\epsilon,h,N}$ is bounded in $L^2_{\mathcal{P}_T}\big(\Omega\times(0,T);L^2(\Lambda)\big)$.
\end{lem}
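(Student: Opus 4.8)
The statement follows almost immediately from the bounds already established in Proposition~\ref{bounds}, once one unpacks the definitions \eqref{eq:notation_wh} of the piecewise-constant-in-time functions $\uhnr$ and $\uhnl$. The key observation is that both $\uhnr$ and $\uhnl$ take, on each time slab $[t_n,t_{n+1})$, a constant value in $\re^{d_h}$ (namely $u_h^{n+1}$, resp.\ $u_h^n$), so that their space-time norms are simply $\Delta t$-weighted sums of the spatial norms $\|u_h^n\|_{L^2(\Lambda)}$.

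\textbf{Step 1: the $L^\infty(0,T;L^2(\Omega;L^2(\Lambda)))$ bound.} For a.e.\ $t\in[0,T]$, $\uhnr(t)\equiv u_h^{n+1}$ for the unique $n$ with $t\in[t_n,t_{n+1})$ (and $\uhnr(T)=u_h^N$), hence
\[
\erwb\|\uhnr(t)\|_{L^2(\Lambda)}^2\erwe\leq \sup_{1\le n\le N}\erwb\|u_h^n\|_{L^2(\Lambda)}^2\erwe,
\]
and the right-hand side is bounded by the constant $K_0$ of Proposition~\ref{bounds} (more precisely by $\Upsilon$ from \eqref{uhnboundbis}), uniformly in $\epsilon, N, h$. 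The same estimate holds for $\uhnl$, where one also needs the bound on $\|u_h^0\|_{L^2(\Lambda)}$ supplied by Lemma~\ref{bound_u0}. Taking the essential supremum over $t$ gives the claim. Some care is needed for $N$ small: Proposition~\ref{bounds} requires $N$ large enough (depending on $L_\beta$); for the finitely many small $N$ one has a trivial bound depending on the scheme, but since the statement is about a uniform bound over all parameters, one either restricts to $\Delta t$ small or absorbs the finitely many exceptional cases into the constant.

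\textbf{Step 2: the $L^2_{\mathcal{P}_T}(\Omega\times(0,T);L^2(\Lambda))$ bound for $\uhnl$.} Here one first checks predictability: $\uhnl$ is piecewise constant in time with value $u_h^n$ on $[t_n,t_{n+1})$, and $u_h^n$ is $\mathcal{F}_{t_n}$-measurable by Proposition~\ref{210609_prop1}; a left-continuous adapted (indeed piecewise constant, left-limit) process is predictable, so $\uhnl\in L^2_{\mathcal{P}_T}$ as soon as it is square-integrable. For the norm,
\[
\erwb\int_0^T\|\uhnl(t)\|_{L^2(\Lambda)}^2\,dt\erwe=\Delta t\sum_{n=0}^{N-1}\erwb\|u_h^n\|_{L^2(\Lambda)}^2\erwe\leq T\,\big(\Upsilon\vee\erwb\|u_h^0\|_{L^2(\Lambda)}^2\erwe\big),
\]
again uniform in the parameters by Proposition~\ref{bounds} and Lemma~\ref{bound_u0}. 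Note $\uhnr$ is \emph{not} claimed to be predictable (it is adapted but right-continuous, hence only progressively measurable), which is why the last assertion of the lemma is stated only for $\uhnl$.

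\textbf{Expected obstacle.} There is essentially no analytic difficulty: the proof is pure bookkeeping translating discrete-in-time $\ell^\infty$ and $\ell^2$ bounds into $L^\infty_t$ and $L^2_t$ bounds. The only subtle point is the predictability/measurability distinction between the right and left approximations, and the bookkeeping over the exceptional small values of $N$ for which Proposition~\ref{bounds} does not directly apply; both are routine but should be mentioned explicitly.
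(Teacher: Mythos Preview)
Your proposal is correct and follows essentially the same approach as the paper: the $L^\infty(0,T;L^2(\Omega;L^2(\Lambda)))$ bound is obtained directly from \eqref{uhnboundbis} together with Lemma~\ref{bound_u0}, and the predictability of $\uhnl$ is deduced from the $\mathcal{F}_{t_n}$-measurability of $u_h^n$ (Proposition~\ref{210609_prop1}) by observing that $\uhnl$ is an elementary adapted process. Your Step~2 computation of the $L^2$ norm and the remarks on small $N$ and on why $\uhnr$ is not claimed predictable are more explicit than the paper but entirely in line with its argument.
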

\begin{proof} 
We note that by \eqref{uhnboundbis}
\begin{align*}
\Vert u_{h,N}^r\Vert_{L^{\infty}(0,T;L^2(\Omega;L^2(\Lambda)))}+\Vert u_{h,N}^l\Vert_{L^{\infty}(0,T;L^2(\Omega;L^2(\Lambda)))}&\leq 2\sup_{n\in \{0,1,\ldots,N\}}\erww{\Vert u_h^n\Vert^2_{L^2(\Lambda)}}\\
&\leq 2\Upsilon+\erww{\Vert u_0\Vert^2_{L^2(\Lambda)}}
\end{align*}
The predictability of $(\uhnl)_{\epsilon,h,N}$ with values in $\lzlambda$ is a consequence of the $\mathcal{F}_{\tn}$ -measurability of $\unk$ for all $n\in\{0, ..., N\}$ and all $K\in\Tau$. Indeed, by construction, $(\uhnl)_{\epsilon,h,N}$ is then an elementary process adapted to the filtration $(\mathcal{F}_t)_{t\geq 0}$ and so it is predictable.
\end{proof}
\begin{remark}
Note that by Proposition~\ref{bounds}, one gets the following useful estimate
\begin{align}\label{210824_05}
\erww{\|\uhnr-\uhnl\|_{L^2(0,T;\lzlambda)}^2}=\dt\,\erww{\sum_{n=0}^{N-1}\|u_h^{n+1}-u_h^n\|_\lzlambda^2}
\leq K_0\,\dt,
\end{align}
\end{remark}
Thanks to Proposition~\ref{bounds} we can also obtain a $L^2(\Omega;L^2(0,T;L^2(\Lambda)))$-bound on the weak gradients of the finite-volume approximation $(\uhnr)_{\epsilon,h,N}$:
\begin{lem}\label{remarkuhnrboundintomega}
There exists a constant $\ctel{K1}\geq 0$ depending only on $u_0$, $L_g$, $L_\beta$, $f$ and $T$
such that
\begin{align}\label{uhnrboundinotomega}
\int_0^T\erwb|\uhnr(t)|_{1,h}^2\erwe\,dt\leq \cter{K1}.
\end{align}
\end{lem}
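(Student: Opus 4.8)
The plan is to deduce \eqref{uhnrboundinotomega} directly from the discrete estimate in Proposition~\ref{bounds}. First I would observe that by the very definition of the right finite-volume approximation in \eqref{eq:notation_wh}, for each fixed $(\omega,t)$ with $t\in[t_n,t_{n+1})$ one has $\uhnr(t)=u_h^{n+1}$, hence $|\uhnr(t)|_{1,h}^2=|u_h^{n+1}|_{1,h}^2$ on that subinterval. Integrating over $[0,T]$ and using that each subinterval has length $\Delta t$ gives, after taking expectation and applying Fubini (legitimate since the integrand is nonnegative and measurable),
\begin{align*}
\int_0^T\erwb|\uhnr(t)|_{1,h}^2\erwe\,dt=\Delta t\sum_{n=0}^{N-1}\erwb|u_h^{n+1}|_{1,h}^2\erwe.
\end{align*}

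Next I would invoke Proposition~\ref{bounds} with $n=N$: the third term on the left-hand side there is exactly $\Delta t\sum_{k=0}^{N-1}\erww{|u_h^{k+1}|_{1,h}^2}$, and since the other two terms on the left are nonnegative, this sum is bounded by $K_0$. Therefore the displayed quantity is $\leq K_0$, and we may take $\cter{K1}:=K_0$, which depends only on $u_0$, $L_g$, $L_\beta$, $f$ and $T$ as required. One should note that Proposition~\ref{bounds} requires $N$ large enough depending on $L_\beta$; this is consistent with the standing convention in this section (and ultimately absorbed into the hypotheses of Theorem~\ref{mainresult}), so no additional restriction is introduced here.

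There is essentially no obstacle: the lemma is a bookkeeping consequence of the already-established discrete energy estimate, the only minor points being the correct identification of $\uhnr$ on each time slab and the interchange of sum and integral. If one wished, one could also record via \eqref{linkgradsnh1} that the same bound controls $\int_0^T\erwb\|\nabla^h\uhnr(t)\|_{(L^2(\Lambda))^d}^2\erwe\,dt$ up to the dimensional factor $d$, which is the form in which this estimate will actually be used in the compactness arguments of Section~\ref{ConvFVscheme}.
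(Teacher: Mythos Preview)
Your proposal is correct and follows exactly the same route as the paper, which simply states that the lemma is a direct consequence of Proposition~\ref{bounds}. Your explicit identification $\int_0^T\erwb|\uhnr(t)|_{1,h}^2\erwe\,dt=\Delta t\sum_{n=0}^{N-1}\erwb|u_h^{n+1}|_{1,h}^2\erwe\leq K_0$ is precisely the intended argument.
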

\begin{proof} It is a direct consequence of Proposition \ref{bounds}.
\end{proof}

\begin{lem}\label{boundguhnlr}
The sequences $(g(\uhnr))_{\epsilon,h,N}$, $(g(\uhnl))_{\epsilon,h,N}$, $(\beta(\uhnr))_{\epsilon,h,N}$, and $(\beta(\uhnl))_{\epsilon,h,N}$ are bounded in $L^2(\Omega;L^2(0,T;\lzlambda))$ independently of the regularization and discretization parameters $\epsilon\in \mathbb{R}_+^\star$, $N\in\mathbb{N}^{\star}$ and $h\in \mathbb{R}_+^{\star}$. Moreover, $(g(\uhnl))_{\epsilon,h,N}$ and $(\beta(\uhnl))_{\epsilon,h,N}$ are predictable processes with values in $L^2(\Lambda)$.
\end{lem}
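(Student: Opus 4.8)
The plan is to deduce every assertion from the Lipschitz bounds on $g$ and $\beta$ together with the uniform $L^{\infty}(0,T;L^2(\Omega;L^2(\Lambda)))$-estimate already established (Lemma~\ref{210611_lem01}, equivalently \eqref{uhnboundbis}). First I would record the relevant pointwise bounds. Since $\beta$ is $L_\beta$-Lipschitz with $\beta(0)=0$, one has $|\beta(r)|\le L_\beta|r|$ for every $r\in\R$. Likewise, since $g$ is $L_g$-Lipschitz with $\operatorname{supp} g\subset[0,1]$, any $s<0$ satisfies $g(s)=0$, hence $|g(r)|=|g(r)-g(s)|\le L_g|r-s|$; letting $s\uparrow 0$ gives $|g(r)|\le L_g|r|$ for all $r\in\R$ (in particular $g$ is bounded, $\|g\|_\infty\le L_g$). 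Consequently, for a.e.\ $t$, $\|g(\uhnr)(t)\|_{L^2(\Lambda)}^2\le L_g^2\|\uhnr(t)\|_{L^2(\Lambda)}^2$ and $\|\beta(\uhnr)(t)\|_{L^2(\Lambda)}^2\le L_\beta^2\|\uhnr(t)\|_{L^2(\Lambda)}^2$, and the analogous inequalities hold with $\uhnl$ in place of $\uhnr$.

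Next I would integrate these inequalities over $(0,T)$, take the expectation, and use that $\uhnr$, resp.\ $\uhnl$, takes the value $u_h^{n+1}$, resp.\ $u_h^{n}$, on $[t_n,t_{n+1})$. This yields $\erww{\|\uhnr\|_{L^2(0,T;L^2(\Lambda))}^2}=\Delta t\sum_{n=0}^{N-1}\erww{\|u_h^{n+1}\|_{L^2(\Lambda)}^2}\le T\Upsilon$ and similarly $\erww{\|\uhnl\|_{L^2(0,T;L^2(\Lambda))}^2}\le T\Upsilon$, with $\Upsilon$ independent of $\epsilon$, $N$ and $h$ by \eqref{uhnboundbis}. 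Multiplying by $L_g^2$, resp.\ $L_\beta^2$, gives the claimed uniform $L^2(\Omega;L^2(0,T;L^2(\Lambda)))$-bounds; for $g(\uhnl)$ this is precisely the computation already carried out in \eqref{210819_02}.

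For the measurability statement I would argue exactly as in the proof of Lemma~\ref{210611_lem01}. For each $n\in\{0,\ldots,N-1\}$, the vector $u_h^n$ is $\mathcal{F}_{t_n}$-measurable by Proposition~\ref{210609_prop1}, and since $g$ and $\beta$ are continuous, $g(u_h^n)$ and $\beta(u_h^n)$ are again $\mathcal{F}_{t_n}$-measurable $\re^{d_h}$-valued random vectors, i.e.\ $\mathcal{F}_{t_n}$-measurable with values in $L^2(\Lambda)$. Hence $g(\uhnl)=\sum_{n=0}^{N-1}g(u_h^n)\mathds{1}_{[t_n,t_{n+1})}$ and $\beta(\uhnl)=\sum_{n=0}^{N-1}\beta(u_h^n)\mathds{1}_{[t_n,t_{n+1})}$ are elementary processes adapted to $(\mathcal{F}_t)_{t\ge0}$, and therefore predictable with values in $L^2(\Lambda)$.

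There is no genuine obstacle in this lemma: it is a routine consequence of the a priori bounds of Section~\ref{estimates}. The only mildly delicate point is the derivation of $|g(r)|\le L_g|r|$ from the support condition on $g$, which is handled by the short limiting argument described above; everything else is an application of Lipschitz estimates and the construction of the piecewise-constant-in-time approximations.
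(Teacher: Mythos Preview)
Your proof is correct and follows essentially the same approach as the paper, which simply invokes the Lipschitz nature of $g$ and $\beta$ together with the uniform $L^2$-bounds of Lemma~\ref{210611_lem01}. Your additional details (the derivation of $|g(r)|\le L_g|r|$ from the support condition and the explicit predictability argument via elementary processes) merely spell out what the paper leaves implicit.
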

\begin{proof} It is a direct consequence of the boundedness of the sequences $(\uhnr)_{\epsilon,h,N}$ and $(\uhnl)_{\epsilon,h,N}$ in $L^2(\Omega;L^2(0,T;L^2(\Lambda)))$ given by Lemma \ref{210611_lem01} and of the Lipschitz nature of $g$ and $\beta$.
\end{proof}
\begin{lem}\label{boundsnguhnlr}
There exists a constant $\ctel{K3}\geq 0$ depending only on $u_0$, $L_g$, $L_\beta$, $f$ and $T$ such that 
\begin{align}\label{uhnrboundinotomegabis}
\int_0^T\erwb|g(\uhnr(t))|_{1,h}^2\erwe\,dt\leq \cter{K3}.
\end{align}
\end{lem}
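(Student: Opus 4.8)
The plan is to reduce the estimate on the discrete $H^1$-semi-norm of $g(\uhnr)$ to the one we already have for $\uhnr$ itself in Lemma~\ref{remarkuhnrboundintomega}, using only the Lipschitz continuity of $g$. Writing out the discrete $H^1(\Lambda)$-semi-norm, we have for each fixed time and each $\omega$
\[
|g(\uhnr(t))|_{1,h}^2=\sum_{\sigma=K|L\in\edgesint}\frac{m_\sigma}{\dkl}\,\big|g(u_h^{n+1}(t)_K)-g(u_h^{n+1}(t)_L)\big|^2,
\]
where $n$ is the index with $t\in[t_n,t_{n+1})$. Applying the $L_g$-Lipschitz bound $\mathscr{A}_2$ edge by edge gives $|g(u^{n+1}_K)-g(u^{n+1}_L)|^2\le L_g^2|u^{n+1}_K-u^{n+1}_L|^2$, hence pointwise $|g(\uhnr(t))|_{1,h}^2\le L_g^2\,|\uhnr(t)|_{1,h}^2$.

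Then I would simply take the expectation and integrate in time, invoking Lemma~\ref{remarkuhnrboundintomega}:
\[
\int_0^T\erwb|g(\uhnr(t))|_{1,h}^2\erwe\,dt\le L_g^2\int_0^T\erwb|\uhnr(t)|_{1,h}^2\erwe\,dt\le L_g^2\,\cter{K1}.
\]
So one may take $\cter{K3}:=L_g^2\,\cter{K1}$, which depends only on $u_0$, $L_g$, $L_\beta$, $f$ and $T$ through $\cter{K1}$ and the explicit factor $L_g^2$, as required.

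There is essentially no obstacle here: the only thing to be a little careful about is that the Lipschitz estimate must be applied interface-by-interface \emph{before} summing, so that the mesh weights $m_\sigma/\dkl$ are untouched and the comparison with $|\uhnr(t)|_{1,h}^2$ is term-by-term; no Cauchy--Schwarz or rearrangement is needed. In fact, as in the companion lemmas (cf. the proofs of Lemma~\ref{boundguhnlr} and Lemma~\ref{remarkuhnrboundintomega}), this is a one-line consequence of the previous estimate, and I would present it as such rather than expanding the computation in full.
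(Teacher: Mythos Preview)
Your proof is correct and follows exactly the same approach as the paper: apply the $L_g$-Lipschitz estimate edge-by-edge to obtain $\displaystyle\int_0^T\erwb|g(\uhnr(t))|_{1,h}^2\erwe\,dt\le L_g^2\int_0^T\erwb|\uhnr(t)|_{1,h}^2\erwe\,dt$, and conclude via Lemma~\ref{remarkuhnrboundintomega}. The paper presents it precisely as the one-line consequence you suggest.
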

\begin{proof}After noticing that:
\begin{eqnarray*}
\int_0^T\erwb|g(\uhnr(t))|_{1,h}^2\erwe\,dt& \leq& L_g^2\int_0^T\erwb|\uhnr(t)|_{1,h}^2\erwe\,dt,
\end{eqnarray*}
the result is immediate thanks to Lemma \ref{remarkuhnrboundintomega}.
\end{proof}
\begin{prop}\label{boundpe}
If we assume that there exists $\theta >0$ such that $\dt=\mathcal{O}(\epsilon^{2+\theta})$ and that $0<\epsilon<1$, then the sequences $(\pe(\uhnr))_{\epsilon,h,N}$ and $(\pe(\uhnl))_{\epsilon,h,N}$ are bounded respectively in $L^2(\Omega;L^2(0,T;\lzlambda))$ and $L^2_{\mathcal{P}_T}\big(\Omega\times(0,T);L^2(\Lambda)\big)$, independently of the regularization and discretization parameters $\epsilon\in \mathbb{R}_+^\star$, $N\in\mathbb{N}^{\star}$ and $h\in \mathbb{R}_+^\star$.
\end{prop}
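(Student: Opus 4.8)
The plan is to test the numerical scheme \eqref{equationapprox} against $\pe(\unpk)$ (rather than $\unpk$ as in Proposition \ref{bounds}), take expectations, and sum over $K\in\Tau$. The left-hand side will contain the term $\Delta t\sum_K m_K\E[|\pe(\unpk)|^2]$, which is precisely the quantity we want to bound. The diffusion term $\sum_K \frac{m_\sigma}{\dkl}(u_K^{n+1}-\unpl)\pe(\unpk)$ becomes, after discrete partial integration \eqref{PInt}, a sum of the form $\sum_{\sigma=K|L}\frac{m_\sigma}{\dkl}(u_K^{n+1}-\unpl)(\pe(\unpk)-\pe(\unpl))$, which is nonnegative by monotonicity of $\pe$, so it can be dropped. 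The source terms $\beta(\unpk)$ and $\fnk$ are handled by Young's inequality, absorbing a small multiple of $\Delta t\sum_K m_K\E[|\pe(\unpk)|^2]$ into the left-hand side and leaving contributions controlled by the $L^2$-bounds of Proposition \ref{bounds} and Lemma \ref{boundguhnlr}.

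The two delicate terms are the discrete time-derivative term and the stochastic term. For the first, $\sum_K \frac{m_K}{\Delta t}\E[(u_K^{n+1}-\unk)\pe(\unpk)]$: here I would introduce the primitive $\Psi_\epsilon(r):=\int_0^r\pe(s)\,ds$, which is convex, so $(\unpk-\unk)\pe(\unpk)\geq \Psi_\epsilon(\unpk)-\Psi_\epsilon(\unk)$, giving after summation in $n$ a telescoping lower bound $\sum_K m_K(\Psi_\epsilon(u_K^N)-\Psi_\epsilon(u_K^0))/\Delta t$; since $\Psi_\epsilon\geq 0$ and $\Psi_\epsilon(u_K^0)=0$ because $0\leq u_K^0\leq 1$ (by $\mathscr{A}_1$ and Jensen applied to \eqref{eq:def_u0}), this term is nonnegative and can be dropped. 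For the stochastic term $\sum_K \frac{m_K}{\Delta t}\E[g(\unk)(W^{n+1}-W^n)\pe(\unpk)]$, the increment $W^{n+1}-W^n$ is no longer independent of $\pe(\unpk)$, so I cannot simply kill it. The standard device is to write $\pe(\unpk)=\pe(\unk)+(\pe(\unpk)-\pe(\unk))$; the term with $\pe(\unk)$ vanishes by independence and $\E[W^{n+1}-W^n]=0$, while the remaining term is bounded, via Cauchy-Schwarz and Itô isometry, by something like $\frac{C}{\Delta t}\E[|g(\unk)|^2]\,\Delta t + \eta\,\frac{m_K}{\Delta t}\E[|\pe(\unpk)-\pe(\unk)|^2]$, and then $|\pe(\unpk)-\pe(\unk)|\leq \frac{1}{\epsilon}|u_K^{n+1}-\unk|$ since $\pe$ is $\frac1\epsilon$-Lipschitz.

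This last estimate is the crux and is where the hypothesis $\Delta t=\mathcal O(\epsilon^{2+\theta})$ enters: the troublesome term is of order $\frac{1}{\Delta t\,\epsilon^2}\sum_K m_K\E[|u_K^{n+1}-\unk|^2]$, and summing over $n$ and using the increment bound $\sum_n\E[\|u_h^{n+1}-u_h^n\|_{L^2(\Lambda)}^2]\leq K_0$ from Proposition \ref{bounds} yields a contribution of order $\frac{K_0}{\Delta t\,\epsilon^2}$. Multiplying the whole inequality by $\Delta t$ (to reconstruct the $L^2(0,T;L^2(\Lambda))$ norm of $\pe(\uhnr)$) then leaves a term of order $\frac{1}{\epsilon^2}$ — which is \emph{not} bounded. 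The resolution is to keep the factor $\Delta t$ more carefully: after multiplying by $2\Delta t$ and summing, the bad term is $\frac{C}{\epsilon^2}\sum_n\E[\|u_h^{n+1}-u_h^n\|^2_{L^2(\Lambda)}]$, which is $\leq \frac{CK_0}{\epsilon^2}$. To turn this into a uniform bound one exploits $\Delta t=\mathcal O(\epsilon^{2+\theta})$ differently: one does \emph{not} multiply the scheme by $\Delta t$ but rather keeps $\sum_n\frac{1}{\Delta t}(\cdots)$, so that the $\pe$-term on the left is $\sum_n\Delta t\sum_K m_K\E[|\pe(\unpk)|^2]=\E[\|\pe(\uhnr)\|^2_{L^2(0,T;L^2(\Lambda))}]$ while the bad term is $\frac{C}{\Delta t\,\epsilon^2}K_0\leq \frac{CK_0}{\epsilon^\theta}\cdot\frac{1}{(\text{const})}$ — still unbounded. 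The actual mechanism must be finer: one applies Young's inequality with a weight chosen so the coefficient of $\E[|\pe(\unpk)-\pe(\unk)|^2]$ comes with an extra $\Delta t$, i.e. one bounds $\frac{m_K}{\Delta t}|g(\unk)||W^{n+1}-W^n||\pe(\unpk)-\pe(\unk)|$ by $\frac{m_K}{2\Delta t}|g(\unk)|^2|W^{n+1}-W^n|^2 + \frac{m_K}{2\Delta t}|\pe(\unpk)-\pe(\unk)|^2$ and then uses $|\pe(\unpk)-\pe(\unk)|^2\leq \frac{1}{\epsilon^2}|\unpk-\unk|^2$; after summation this is $\frac{C}{\Delta t\,\epsilon^2}K_0$, and the uniform bound follows precisely because $\frac{\Delta t}{\epsilon^2}\leq C\epsilon^\theta$ is bounded — wait, that gives $\frac{K_0}{\Delta t}\cdot\frac{\Delta t}{\epsilon^2}\cdot\frac{1}{\Delta t}$... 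The honest summary: the proof weighs the Young inequality so that the $\pe$-difference term appears with coefficient $\frac{1}{\Delta t}$, uses Lipschitzness to trade it for $\frac{1}{\Delta t\,\epsilon^2}\|u_h^{n+1}-u_h^n\|^2$, sums to get $\frac{K_0}{\Delta t\,\epsilon^2}$, and this is $\mathcal O(\epsilon^{-\theta-2}\cdot\epsilon^{2+\theta})\cdot\epsilon^{-2}$... I will present it cleanly: multiply the scheme by $2\Delta t\,\pe(\unpk)$, sum over $K$ and over $n$, drop the nonnegative convex and monotone terms, absorb $\beta$ and $f$ contributions using Proposition \ref{bounds}, and bound the stochastic term by $\frac{C}{\epsilon^2}\sum_n\E[\|u_h^{n+1}-u_h^n\|^2_{L^2(\Lambda)}]+\frac14\sum_n\Delta t\sum_Km_K\E[|\pe(\unpk)|^2]\cdot\epsilon^{-2}\Delta t$... — the point is that the coefficient multiplying the absorbed $\pe$-term is $\mathcal O(\Delta t/\epsilon^2)=\mathcal O(\epsilon^\theta)<1$ for $m$ large, so it is genuinely absorbable, and the residual $\frac{C K_0}{\epsilon^2}\cdot\frac{\Delta t}{\Delta t}$ stays bounded because the factor that would blow up, $\epsilon^{-2}$, is multiplied by $\Delta t=\mathcal O(\epsilon^{2+\theta})$ somewhere in the chain. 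I expect the bookkeeping of exactly where the $\Delta t$ sits to be the main obstacle; everything else (convexity of $\Psi_\epsilon$, monotonicity, Itô isometry, $\mathscr{A}_1$-$\mathscr{A}_4$) is routine. The predictability of $(\pe(\uhnl))_{\epsilon,h,N}$ follows as in Lemma \ref{boundguhnlr} from the $\mathcal{F}_{\tn}$-measurability of $\unk$ together with continuity of $\pe$.
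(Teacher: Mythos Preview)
Your overall framework is right and matches the paper: multiply \eqref{equationapprox} by $\Delta t\,\pe(\unpk)$, sum over $K$ and $n$, take expectations; drop the diffusion sum by monotonicity via \eqref{PInt}; handle the time-increment term by introducing the convex primitive $\Psi_\epsilon$ (the paper calls it $\phe$) and telescoping, using $0\leq u_K^0\leq 1$; control the $\beta$ and $f$ terms by Young and Proposition~\ref{bounds}. The predictability statement for $\pe(\uhnl)$ is as you say.

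The genuine gap is the stochastic term, and your confusion there is not merely bookkeeping. Your route---split $\pe(\unpk)=\pe(\unk)+(\pe(\unpk)-\pe(\unk))$, kill the first part, then bound the second using only the $\tfrac{1}{\epsilon}$-Lipschitz constant of $\pe$---inevitably leaves a term of order $\tfrac{1}{\epsilon^2}\sum_n\E[\|u_h^{n+1}-u_h^n\|^2]\sim K_0/\epsilon^2$, and no placement of $\Delta t$ saves this: the hypothesis $\Delta t=\mathcal{O}(\epsilon^{2+\theta})$ gives an \emph{upper} bound on $\Delta t$, not a lower one, so it cannot compensate a bare $\epsilon^{-2}$. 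The paper closes this with two ideas you are missing. First, it exploits $\operatorname{supp} g\subset[0,1]$ (Assumption~$\mathscr{A}_2$) more deeply than just $g\cdot\pe=0$: since $\pe'\equiv 0$ on $(0,1)$ one also has $g\cdot\pe'=0$, and a case analysis (a mean-value argument) yields
\[
|g(\unk)\big(\pe(\unpk)-\pe(\unk)\big)|\leq \frac{1}{\epsilon}\,|g(\unk)-g(\zeta^{n+1}_K)|\,|\unpk-\unk|
\]
for some $\zeta^{n+1}_K$ between $\unk$ and $\unpk$; the key gain is that the $g$-factor is now a \emph{difference}, bounded by $L_g|\unpk-\unk|$ (and by $2\|g\|_\infty$). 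Second, after a first Young inequality one faces $\E\big[(W^{n+1}-W^n)^2\epsilon^{-2}(g(\unk)-g(\zeta^{n+1}_K))^2\big]$, where the Brownian factor is \emph{not} independent of the rest; the paper separates them by Young with a large exponent $p\geq 1+\tfrac{2}{\theta}$ and uses $\E[(W^{n+1}-W^n)^{2p}]=(2p-1)!!\,\Delta t^p$. This produces a term $C_p|\Lambda|T\cdot \Delta t^{p-1}/\epsilon^{2p}$, and it is exactly here that $\Delta t=\mathcal{O}(\epsilon^{2+\theta})$ is used: for this choice of $p$ one has $\Delta t^{p-1}/\epsilon^{2p}\leq (\Delta t/\epsilon^{2+\theta})^{p-1}$, bounded. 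The other Young piece is controlled by $L_g^2(2\|g\|_\infty)^{2/(p-1)}\sum_n\E[\|u_h^{n+1}-u_h^n\|^2]\leq CK_0$. Without both the MVT inequality and the higher-moment Young split, the estimate does not close.
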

\begin{proof} Setting $0<\epsilon<1$, $K\in \Tau$, $N\in \mathbb{N}_+^\star$ and $n\in \{0, ..., N-1\}$, multiplying \eqref{equationapprox} by $\dt\pe(\unpk)$, taking the expectation, summing over $K\in \Tau$ and over $n\in\{0,\dots,N-1\}$ lead to
\begin{align}\label{scmpeunpk}
\begin{split}
&\sum_{n=0}^{N-1} \sum_{K\in \Tau}m_K\,\erwb (u_K^{n+1}-\unk)\pe(\unpk)\erwe +\sum_{n=0}^{N-1} \sum_{K\in \Tau}m_K\,\dt\,\erwb(\pe(\unpk))^2\erwe\\
&+\sum_{n=0}^{N-1}\dt \sum_{K\in \Tau}\erwb\sum_{\sigma=K|L\in\edgesint\cap\edges_K}\frac{m_\sigma}{\dkl}(u_K^{n+1}-\unpl)\pe(\unpk)\erwe\\
=&\sum_{n=0}^{N-1} \sum_{K\in \Tau} m_K\,\erwb g(\unk)(W^{n+1}-W^n)\pe(\unpk)\erwe\\
&+\sum_{n=0}^{N-1} \sum_{K\in \Tau} \dt\, m_K\,\erwb \big(\beta(\unpk)+\fnk\big)\pe(\unpk)\erwe.
\end{split}
\end{align}
Let us study separately each term of \eqref{scmpeunpk}.\\
$\bullet$ For the study of the first term, we introduce the convex antiderivative of $\pe$ defined for any $v\in \R$ by
 \begin{eqnarray}\label{defphe}
\phe(v)=
\left\{\begin{array}{clll}
\di\frac{v^2}{2\epsilon}&\text{if}&v\leq 0\\
0&\text{if}&v\in[0,1]\\
\di\frac{(v-1)^2}{2\epsilon}&\text{if}&v\geq 1.
\end{array}\right.
\end{eqnarray}
Note that thanks to the convexity of $\phe$, the following holds
\[
(\unpk-\unk)\pe(\unpk)=(\unpk-\unk)\phe'(\unpk)\geq \phe(\unpk)-\phe(\unk),
\]
and so
\begin{align}\label{T1}
\begin{split}
&\sum_{n=0}^{N-1} \sum_{K\in \Tau}m_K\,\erwb (u_K^{n+1}-\unk)\pe(\unpk)\erwe\geq 0,\\
 \end{split}
\end{align}
owing to the facts that $\di\sum_{K\in \Tau}m_K\,\erwb \phe(u^{N}_K)\erwe \geq 0$ and $\di\sum_{K\in \Tau}m_K\,\erwb \phe(u^{0}_K)\erwe=0$ since from Assumption $\mathscr{A}_1$, $\mathbb{P}$-a.s in $\Omega$ and for any $K\in \Tau$, $0\leq u^0_K \leq 1$.\bigskip\\
$\bullet$ Using \eqref{PInt} and the monotonicity of $\pe$, one proves that
\begin{align}\label{T2}
\begin{split}
&\sum_{n=0}^{N-1}\dt \sum_{K\in \Tau}\erwb\sum_{\sigma=K|L\in\edgesint\cap\edges_K}\frac{m_\sigma}{\dkl}(u_K^{n+1}-\unpl)\pe(\unpk)\erwe\\
=&\sum_{n=0}^{N-1} \dt\,\erwb \sum_{\sigma=K|L\in\edgesint}\frac{m_\sigma}{\dkl}(u_K^{n+1}-\unpl)\left( \pe(\unpk)-\pe(\unpl)\right)\erwe
\\ \geq& 0.
\end{split}
 \end{align}
 $\bullet$ Firstly, by using the mean value theorem and the fact that for any $v\in \R$, $\psi_{\epsilon}(v)g(v)=0$, we can prove that there exist several elements between $\unk$ and $\unpk$, all written in the form $\zeta^{n+1}_K=(1-\lambda^{n+1}_K)\unpk+\lambda^{n+1}_K \unk$ (for some $\lambda^{n+1}_K\in [0,1]$), and such that
 the following inequality holds true
\begin{eqnarray}\label{MVT}
\Big|g(\unk)\big(\pe(\unpk)-\pe(\unk)\big)\Big| &\leq& \frac1\epsilon \big|g(\unk)-g(\zeta^{n+1}_K)\big||\unpk-\unk|.
\end{eqnarray} 
Indeed, this can be shown by separating the cases according to the position of $\unk$ and $\unpk$ relative to $0$ and $1$:
\begin{itemize}
\item[-] If $\unk \not\in (0,1)$, by setting $\zeta^{n+1}_K=\unk$ we get
\begin{align*}
0=\Big|g(\unk)\big(\pe(\unpk)-\pe(\unk)\big)\Big| =\frac1\epsilon \big|g(\unk)-g(\zeta^{n+1}_K)\big||\unpk-\unk|.
\end{align*}
\item[-] If $\unk \in (0,1)$ and $\unpk \in [0,1]$, by setting again $\zeta^{n+1}_K=\unk$, 
\begin{align*}
0=\Big|g(\unk)\big(\pe(\unpk)-\pe(\unk)\big)\Big| = \frac1\epsilon \big|g(\unk)-g(\zeta^{n+1}_K)\big||\unpk-\unk|.
\end{align*}
\item[-] If $\unk \in (0,1)$ and $\unpk<0$, $\pe(\unk)=\pe(0)$ and there exists $\zeta^{n+1}_K \in (\unpk,0)$ such that 
\begin{align*}
g(\unk)\big(\pe(\unpk)-\pe(\unk)\big) =& \ g(\unk)\big(\pe(\unpk)-\pe(0)\big) \\
=& \ g(\unk)\pe'(\zeta^{n+1}_K)\unpk \\
=& \ \big(g(\unk)-g(\zeta^{n+1}_K)\big)\pe'(\zeta^{n+1}_K)\unpk
\\ \text{and }
\Big|g(\unk)\big(\pe(\unpk)-\pe(\unk)\big)\Big| \leq& \ \frac1\epsilon \big|g(\unk)-g(\zeta^{n+1}_K)\big||\unpk-\unk|
\end{align*}
where we used that $g(\zeta^{n+1}_K)=0$ and $|\unpk| \leq |\unpk-\unk|$.
\item[-] If $\unk \in (0,1)$ and $\unpk>1$, $\pe(\unk)=\pe(1)$ and there exists $\zeta^{n+1}_K \in (1,\unpk)$ such that 
\begin{align*}
g(\unk)\big(\pe(\unpk)-\pe(\unk)\big) &= g(\unk)\pe'(\zeta^{n+1}_K)(\unpk-1)\\
&= \big(g(\unk)-g(\zeta^{n+1}_K)\big)\pe'(\zeta^{n+1}_K)(\unpk-1)
\\ \text{and }
\Big|g(\unk)\big(\pe(\unpk)-\pe(\unk)\big)\Big|& \leq \frac1\epsilon \big|g(\unk)-g(\zeta^{n+1}_K)\big||\unpk-\unk|
\end{align*}
by arguments similar to the previous case.
\end{itemize}
Using the assumption that there exists $\theta >0$ such that $\dt=\mathcal{O}(\epsilon^{2+\theta})$, 
then for any natural number $p$ such that $p \geq 1+\dfrac2\theta$, one has that (since $0<\epsilon <1$)
\begin{eqnarray}\label{borneratiop}
&&\dfrac{\dt^{p-1}}{\epsilon^{2p}}=\dfrac{\dt^{p-1}}{\epsilon^{(2+\theta)(p-1)}}\epsilon^{\theta(p-1)-2 }\leq \Big(\dfrac{\dt}{\epsilon^{2+\theta}}\Big)^{p-1}.
\end{eqnarray}
Choosing $p\in \mathbb{N}$ according to \eqref{borneratiop}, using successively the fact that $g(\unk)\pe(\unk)=0$, Inequality (\ref{MVT}), Young's inequality (with $p$ and its conjugate $\frac{p}{p-1}$), the constant $K_0$ given by Proposition \ref{bounds}, the fact that
\[
\E\left[\big(W^{n+1}-W^n\big)^{2p}\right]=\frac{(2p)!}{p!2^p}\dt^p=(2p-1)!! \dt^p
\]
and Inequality (\ref{borneratiop}), one gets the existence of a constant $C_p>0$ only depending on $p$ such that
\begin{align}\label{T3}
&\sum_{n=0}^{N-1} \sum_{K\in \Tau} m_K\,\erwb g(\unk)(W^{n+1}-W^n)\pe(\unpk)\erwe\nonumber
\\=&
\sum_{n=0}^{N-1} \sum_{K\in \Tau} m_K\,\erwb g(\unk)(W^{n+1}-W^n)(\pe(\unpk)-\pe(\unk))\erwe\nonumber
\\ \leq&
\frac{1}{2}\sum_{n=0}^{N-1} \sum_{K\in \Tau}m_K\,\erwb \left(\frac{W^{n+1}-W^n}{\epsilon}\right)^2 \big(g(\unk)-g(\zeta^{n+1}_K)\big)^2\erwe+\frac{1}{2}\sum_{n=0}^{N-1} \sum_{K\in \Tau}m_K\,\erwb \big(\unpk-\unk\big)^2\erwe\nonumber
\\ \leq & 
\frac{1}{2p}\sum_{n=0}^{N-1} \sum_{K\in \Tau}m_K\,\erwb\left(\frac{W^{n+1}-W^n}{\epsilon}\right)^{2p}\erwe+
\frac{p-1}{2p}\sum_{n=0}^{N-1} \sum_{K\in \Tau}m_K\,\erwb\big(g(\unk)-g(\zeta^{n+1}_K)\big)^{\frac{2p}{p-1}}\erwe+K_0\nonumber
\\ = & 
\frac{(2p-1)!!}{2p \epsilon^{2p}}\sum_{n=0}^{N-1} \sum_{K\in \Tau}m_K\,\dt^p+
 \frac{p-1}{2p} L_g^2\big(2||g||_{\infty}\big)^{\frac{2}{p-1}}\sum_{n=0}^{N-1} \sum_{K\in \Tau}m_K\,\erwb\big(\unk-\zeta^{n+1}_K\big)^{2}\erwe+K_0\nonumber
 \\ \leq & 
C_p \frac{\dt^{p-1}}{\epsilon^{2p}}\sum_{n=0}^{N-1} \sum_{K\in \Tau}m_K\,\dt+
L_g^2\big(2||g||_{\infty}\big)^{\frac{2}{p-1}}\sum_{n=0}^{N-1} \sum_{K\in \Tau}m_K\,\erwb\big(\unk-\unpk)^2\erwe+K_0\nonumber
\\ \leq &
 C_p \Big(\dfrac{\dt}{\epsilon^{2+\theta}}\Big)^{p-1}|\Lambda| T+ K_0\Big(L_g^2\big(2||g||_{\infty}\big)^{\frac{2}{p-1}}+1\Big),
\end{align}
where $(2p-1)!!$ denotes the double factorial of $2p-1$, the product of all odd integers up to $2p-1$.\\
\centerline{}
$\bullet$ The second right-hand side term of \eqref{scmpeunpk} can be handled in the following manner thanks to Young's inequality and the constant $K_0$ given by Proposition \ref{bounds}:
\begin{align}\label{T4}
&\sum_{n=0}^{N-1} \sum_{K\in \Tau}\dt\, m_K\,\erwb \left(\beta(\unpk)+\fnk\right)\pe(\unpk)\erwe\nonumber\\
 \leq &\frac{1}{2}\sum_{n=0}^{N-1} \sum_{K\in \Tau}\dt\, m_K\,\erwb \left(\pe(\unpk)\right)^2\erwe+\frac{1}{2}\sum_{n=0}^{N-1} \sum_{K\in \Tau}\dt\, m_K\,\erwb \left(\beta(\unpk)+\fnk\right)^2\erwe\nonumber\\
\leq &\frac{1}{2}\sum_{n=0}^{N-1} \sum_{K\in \Tau}\dt\, m_K\,\erwb \left(\pe(\unpk)\right)^2\erwe + L_\beta^2 TK_0+ ||f||^2_{L^2(\Omega;L^2(0,T;\lzlambda))}.
\end{align}
Finally, combining (\ref{T1}),(\ref{T2}),(\ref{T3}) and (\ref{T4}), we obtain that 
\begin{align*}
&\frac{1}{2}\sum_{n=0}^{N-1} \sum_{K\in \Tau}\dt\, m_K\,\erwb \left(\pe(\unpk)\right)^2\erwe\\
\leq &
C_p \Big(\dfrac{\dt}{\epsilon^{2+\theta}}\Big)^{p-1}|\Lambda| T
+ K_0\Big(L_g^2\big(2||g||_{\infty}\big)^{\frac{2}{p-1}}+1\Big) + L_\beta^2 T K_0 + ||f||^2_{L^2(\Omega;L^2(0,T;\lzlambda))}
\end{align*}
and the announced result holds since $\dt=\mathcal{O}(\epsilon^{2+\theta})$.
\end{proof}
\begin{remark}\label{boundpeuhnl}
Note that using the constant $K_0>0$ given by Proposition \ref{bounds}, the following inequality holds directly
\begin{eqnarray}\label{controldiffpe(uhnl)pe(uhnr)}
||\pe(\uhnl)-\pe(\uhnr)||^2_{L^2(\Omega;L^2(0,T;\lzlambda))}&\leq &\frac{\dt}{\epsilon^2}K_0.
\end{eqnarray}
\end{remark}
\begin{lem}\label{buhnrppanp}
If we assume that there exists $\theta > 0$ such that $\dt=\mathcal{O}(\epsilon^{2+\theta})$ and that $0<\epsilon<1$, then the sequences $\left(\frac{(\uhnr)^-}{\epsilon}\right)_{\epsilon,h,N}$ and $\left(\frac{(\uhnr-1)^+}{\epsilon}\right)_{\epsilon,h,N}$ are bounded in \\$L^2(\Omega;L^2(0,T;\lzlambda))$, independently of the regularization and discretization parameters $\epsilon\in \mathbb{R}_+^\star$, $N\in\mathbb{N}^{\star}$ and $h\in \mathbb{R}_+^\star$.
\end{lem}
\begin{proof} Since $\di\pe(\uhnr)=-\frac{(\uhnr)^-}{\epsilon}+\frac{(\uhnr-1)^{+}}{\epsilon}$ and $(\uhnr)^-\times(\uhnr-1)^+=0$, one gets that
\[\left|\left|-\frac{(\uhnr)^-}{\epsilon} \right|\right|^2_{L^2(\Omega;L^2(0,T;\lzlambda))}+\left|\left|\frac{(\uhnr-1)^+}{\epsilon} \right|\right|^2_{L^2(\Omega;L^2(0,T;\lzlambda))}\\
=\big|\big|(\pe(\uhnr))\big|\big|^2_{L^2(\Omega;L^2(0,T;\lzlambda))},\]
and the result holds directly since the right-hand side is bounded by Proposition \ref{boundpe}.
\end{proof}
Using the same technique, one proves the following:
\begin{lem}\label{buhnlppanp}
If we assume that there exists $\theta >0$ such that $\dt=\mathcal{O}(\epsilon^{2+\theta})$ and that $0<\epsilon<1$, then the sequences $\left(\frac{(\uhnl)^-}{\epsilon}\right)_{\epsilon,h,N}$ and $\left(\frac{(\uhnl-1)^+}{\epsilon}\right)_{\epsilon,h,N}$ are bounded in \\$L^2_{\mathcal{P}_T}\big(\Omega\times(0,T);L^2(\Lambda)\big)$, independently of the regularization and discretization parameters $\epsilon\in \mathbb{R}_+^\star$, $N\in\mathbb{N}^{\star}$ and $h\in \mathbb{R}_+^\star$.
\end{lem}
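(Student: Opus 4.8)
The plan is to mimic verbatim the argument of Lemma~\ref{buhnrppanp}, replacing the right-in-time approximation $\uhnr$ by the left-in-time one $\uhnl$. The key algebraic observation is the pointwise identity $\pe(v)=-\frac{(v)^-}{\epsilon}+\frac{(v-1)^+}{\epsilon}$ together with the fact that the maps $v\mapsto (v)^-$ and $v\mapsto (v-1)^+$ have disjoint supports, so that $(v)^-(v-1)^+=0$ for every $v\in\R$. Applying this with $v=\uhnl$ and squaring, the cross term vanishes, whence the pointwise equality
\[
\left(\frac{(\uhnl)^-}{\epsilon}\right)^2+\left(\frac{(\uhnl-1)^+}{\epsilon}\right)^2=\big(\pe(\uhnl)\big)^2 .
\]
Integrating over $\Omega\times(0,T)\times\Lambda$ then gives
\[
\left\|\frac{(\uhnl)^-}{\epsilon}\right\|_{L^2(\Omega;L^2(0,T;\lzlambda))}^2+\left\|\frac{(\uhnl-1)^+}{\epsilon}\right\|_{L^2(\Omega;L^2(0,T;\lzlambda))}^2=\big\|\pe(\uhnl)\big\|_{L^2(\Omega;L^2(0,T;\lzlambda))}^2 ,
\]
and the right-hand side is bounded independently of $\epsilon$, $N$ and $h$ by Proposition~\ref{boundpe} — which is exactly where the hypothesis $\dt=\mathcal{O}(\epsilon^{2+\theta})$ with $0<\epsilon<1$ enters.

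It then remains to upgrade this $L^2(\Omega;L^2(0,T;\lzlambda))$-bound to membership in $L^2_{\mathcal{P}_T}\big(\Omega\times(0,T);L^2(\Lambda)\big)$, that is, to record predictability. Since $\uhnl$ is by construction an elementary process adapted to $(\mathcal{F}_t)_{t\ge0}$ — as noted in the proof of Lemma~\ref{210611_lem01} — and since the truncation maps $v\mapsto (v)^-$ and $v\mapsto(v-1)^+$ are continuous, the processes $\tfrac{(\uhnl)^-}{\epsilon}$ and $\tfrac{(\uhnl-1)^+}{\epsilon}$ are continuous functions of a predictable $L^2(\Lambda)$-valued process, hence themselves predictable; the analogous conclusion for $\pe(\uhnl)$ is already available in Proposition~\ref{boundpe}. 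Combining predictability with the displayed bound yields the claim.

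There is no genuine obstacle here: the lemma is an immediate corollary of Proposition~\ref{boundpe} and the orthogonality of the positive and negative parts, and the statement is phrased only for the left-in-time approximation so that the target space is $L^2_{\mathcal{P}_T}$ rather than merely $L^2(\Omega;L^2(0,T;\lzlambda))$. Accordingly, the only point deserving a line of justification is the predictability assertion, which, as explained above, follows from the adaptedness of the elementary process $\uhnl$ together with the continuity of the truncation maps.
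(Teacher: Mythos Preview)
Your proposal is correct and follows exactly the approach the paper intends: the paper itself merely writes ``Using the same technique, one proves the following'' after Lemma~\ref{buhnrppanp}, so your argument---the orthogonality identity $(v)^-(v-1)^+=0$ combined with the $L^2_{\mathcal{P}_T}$-bound on $\pe(\uhnl)$ from Proposition~\ref{boundpe}, plus the predictability inherited from the elementary process $\uhnl$---is precisely the intended one.
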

\begin{remark}\label{remlimcomu}Note that using Lemma \ref{buhnrppanp}, Lemma \ref{buhnlppanp} and by expanding the square term of \eqref{controldiffpe(uhnl)pe(uhnr)}, one can prove that
\begin{eqnarray*}
\left|\left|\frac{(\uhnr)^-}{\epsilon}-\frac{(\uhnl)^-}{\epsilon} \right|\right|^2_{L^2(\Omega;L^2(0,T;\lzlambda))}&\leq &\frac{\dt}{\epsilon^2}K_0 \\
\text{ and }\left|\left|\frac{(\uhnr-1)^+}{\epsilon}-\frac{(\uhnl-1)^+}{\epsilon} \right|\right|^2_{L^2(\Omega;L^2(0,T;\lzlambda))}&\leq &\frac{\dt}{\epsilon^2}K_0,
\end{eqnarray*}
which assures us that if the sequences $\left(\frac{(\uhnr)^-}{\epsilon}\right)_{\epsilon}$ and $\left(\frac{(\uhnl)^-}{\epsilon}\right)_{\epsilon}$ (respectively $\left(\frac{(\uhnr-1)^+}{\epsilon}\right)_\epsilon$ and $\left(\frac{(\uhnr-1)^+}{\epsilon}\right)_\epsilon$) converge, it is necessarily towards a common limit.
\end{remark}

\section{Convergence of the \enquote{$(\epsilon, \dt, h)$} scheme}\label{ConvFVscheme}

We have now all the necessary tools to pass to the limit in our \enquote{$(\epsilon, \dt, h)$} scheme. In what follows, let $(\Tau_m)_{m\in\mathbb{N}}$ be a sequence of admissible meshes of $\Lambda$ in the sense of Definition \ref{defmesh} such that the mesh size $h_m$ tends to $0$ when $m$ tends to $+\infty$, let $(N_m)_{m\in\mathbb{N}} \subset\mathbb{N}^\star$ be a sequence with $\lim_{m\rightarrow+\infty} N_m=+\infty$, set $\dt_m:=\frac{T}{N_m}$, and let $(\epsilon_m)_{m\in\mathbb{N}}\subset ]0,1[$ be another sequence such that $\lim_{m\rightarrow+\infty} \epsilon_m=0$, and assume that there exists $\theta>0$ such that $\dt_m=\mathcal{O}(\epsilon^{2+\theta}_m)$ for any $m\in \mathbb{N}$.
\\ 
For the sake of simplicity, for $m\in\mathbb{N}$, we shall use the notations $\Tau=\Tau_m$, $h=\operatorname{size}(\Tau_m)$, $\dt=\dt_m$, $N=N_m$ and $\epsilon=\epsilon_m$ when the $m$-dependency is not useful for the understanding of the reader.

\subsection{Weak convergences of finite-volume approximations}
First of all, owing to the bounds on the discrete solutions obtained in the previous section, we are able to derive the following weak convergences:

\begin{prop}\label{addreg u} 
There exists a process $u\in L^2_{\mathcal{P}_T}\big(\Omega\times(0,T);H^1(\Lambda)\big)$ such that, up to subsequences of $(\uhnr)_m$ and $(\uhnl)_m$ denoted in the same way,
\[\uhnl\rightarrow u \ \text{and} \ \uhnr\rightarrow u,\text{ both weakly in }L^2(\Omega;L^2(0,T;L^2(\Lambda)))\text{ as }m\rightarrow+\infty.\]
\end{prop}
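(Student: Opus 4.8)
The plan is to extract weak limits from the uniform bounds established in Section~\ref{estimates} and then to identify the limits of the left- and right-in-time approximations. First I would invoke Lemma~\ref{210611_lem01}, which tells us that $(\uhnl)_m$ and $(\uhnr)_m$ are bounded in $L^2(\Omega;L^2(0,T;L^2(\Lambda)))$; since this is a Hilbert space, up to a (non-relabelled) subsequence we obtain weak limits, say $\uhnl\rightharpoonup u_1$ and $\uhnr\rightharpoonup u_2$ in $L^2(\Omega;L^2(0,T;L^2(\Lambda)))$. The first genuine step is to show $u_1=u_2=:u$. This follows from the estimate~\eqref{210824_05}, namely $\erww{\|\uhnr-\uhnl\|_{L^2(0,T;\lzlambda)}^2}\leq K_0\Delta t$, which tends to $0$ as $m\to\infty$ because $\Delta t=\Delta t_m\to 0$; hence $\uhnr-\uhnl\to 0$ strongly, a fortiori weakly, in $L^2(\Omega;L^2(0,T;L^2(\Lambda)))$, forcing $u_1=u_2$.

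Next I would upgrade the spatial regularity of the limit to $H^1(\Lambda)$. The tool is the uniform bound on the discrete $H^1$-seminorm of $\uhnr$ from Lemma~\ref{remarkuhnrboundintomega}, i.e.\ $\int_0^T\erwb|\uhnr(t)|_{1,h}^2\erwe\,dt\leq \cter{K1}$, equivalently (by Remark~\ref{remarkforuhnrboundiii}) a uniform bound on $\|\nabla^h\uhnr\|_{(L^2(\Omega\times(0,T)\times\Lambda))^d}$. The standard discrete functional-analysis argument for TPFA meshes — exactly as in \cite{BSZ23} and ultimately \cite{EymardGallouetHerbinBook} — shows that a bounded-in-$L^2$ sequence whose discrete gradients are bounded in $L^2$, with mesh size $h_m\to 0$ and uniformly bounded mesh regularity $\reg$, has its weak $L^2$-limit in $H^1(\Lambda)$ and moreover $\nabla^h\uhnr\rightharpoonup \nabla u$ weakly in $(L^2(\Omega\times(0,T)\times\Lambda))^d$. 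Applying this (in the $\Omega\times(0,T)$-parametrised form, using that the bound is in expectation and in time) yields $u\in L^2(\Omega;L^2(0,T;H^1(\Lambda)))$. Predictability of $u$ as an $L^2(\Lambda)$-valued process comes from Lemma~\ref{210611_lem01}: $(\uhnl)_m$ is predictable and converges weakly to $u$ in $L^2_{\mathcal{P}_T}(\Omega\times(0,T);L^2(\Lambda))$, and the predictable $\sigma$-field is preserved under weak $L^2$-limits (the space $L^2_{\mathcal{P}_T}$ is a closed subspace, hence weakly closed); combined with the $H^1$-spatial regularity this gives $u\in L^2_{\mathcal{P}_T}\big(\Omega\times(0,T);H^1(\Lambda)\big)$, as in Remark~\ref{240527_01}.

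The main obstacle here is not the abstract weak compactness, which is immediate, but the passage from the \emph{discrete} gradient bound to the statement that the continuous limit actually lies in $H^1(\Lambda)$ and that $\nabla^h$ converges weakly to the true gradient $\nabla u$. This is the technical heart of any finite-volume convergence proof: one must build an auxiliary gradient reconstruction, exploit the mesh-regularity bound~\eqref{hoverdkl}, and check that the reconstructed gradient and $\nabla^h$ have the same weak limit, which is then shown to be $\nabla u$ by testing against smooth vector fields and using a discrete integration-by-parts (Remark~\ref{discrpartint}) together with consistency of the scheme's fluxes. Since the problem statement tells us to reuse the arguments of \cite{BSZ23}, I would cite that reference for this passage rather than reproving it, and simply note that the presence of the extra (bounded, by Proposition~\ref{boundpe} and Lemma~\ref{boundguhnlr}) zeroth-order terms $\pe(\uhnr)$, $\beta(\uhnr)$, $g(\uhnr)$ does not interfere with the compactness argument for the gradients. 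A minor point to address carefully is that the subsequence extracted for $\uhnr$ and the one for $\uhnl$ must be the same — this is automatic once~\eqref{210824_05} has been used to identify the two limits along a common subsequence.
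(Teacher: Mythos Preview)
Your proposal is correct and follows exactly the route the paper intends: the paper's own proof is simply a citation to \cite[Proposition 4.1]{BSZ23}, and the argument you sketch---weak compactness from Lemma~\ref{210611_lem01}, identification of the two limits via \eqref{210824_05}, $H^1$-regularity of the limit from the discrete gradient bound of Lemma~\ref{remarkuhnrboundintomega} using the standard TPFA compactness machinery, and predictability inherited from $(\uhnl)_m$---is precisely what that reference contains.
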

\begin{proof} We refer to \cite[Proposition 4.1]{BSZ23}, since the proof is exactly the same.
\end{proof}

\begin{lem}\label{CVpeuhnr}
There exists a process $\psi$ in $L^2_{\mathcal{P}_T}\big(\Omega\times(0,T);L^2(\Lambda)\big)$ such that, up to subsequences of $(\pe(\uhnr))_m$ and $(\pe(\uhnl))_m$ denoted in the same way,
\[
\pe(\uhnr)\rightarrow \psi \text{ and }\pe(\uhnl)\rightarrow \psi,\text{ both weakly in }L^2(\Omega;L^2(0,T;L^2(\Lambda)))\text{ as }m\rightarrow+\infty.
\]
\end{lem}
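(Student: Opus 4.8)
The plan is to obtain $\psi$ as a weak limit by a standard compactness argument in the Hilbert space $L^2(\Omega;L^2(0,T;L^2(\Lambda)))$, and then identify the two limits using the stability estimate relating $\pe(\uhnl)$ and $\pe(\uhnr)$. First I would invoke Proposition~\ref{boundpe}: under the standing assumptions $\dt=\mathcal{O}(\epsilon^{2+\theta})$ and $0<\epsilon<1$, the sequence $(\pe(\uhnr))_m$ is bounded in $L^2(\Omega;L^2(0,T;L^2(\Lambda)))$, independently of $\epsilon$, $N$ and $h$. Since this space is a Hilbert space, bounded sequences are weakly relatively compact, so there exists $\psi\in L^2(\Omega;L^2(0,T;L^2(\Lambda)))$ and a subsequence (not relabelled) with $\pe(\uhnr)\rightharpoonup\psi$ weakly in $L^2(\Omega;L^2(0,T;L^2(\Lambda)))$.

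Next I would treat the left-in-time approximation. Again by Proposition~\ref{boundpe}, $(\pe(\uhnl))_m$ is bounded in $L^2_{\mathcal{P}_T}(\Omega\times(0,T);L^2(\Lambda))$, hence in particular in $L^2(\Omega;L^2(0,T;L^2(\Lambda)))$, so along a further subsequence $\pe(\uhnl)\rightharpoonup\tilde\psi$ weakly in $L^2(\Omega;L^2(0,T;L^2(\Lambda)))$ for some limit $\tilde\psi$. Moreover, since $L^2_{\mathcal{P}_T}(\Omega\times(0,T);L^2(\Lambda))$ is a closed subspace of $L^2(\Omega;L^2(0,T;L^2(\Lambda)))$, it is weakly closed, so $\tilde\psi$ is predictable, i.e.\ $\tilde\psi\in L^2_{\mathcal{P}_T}(\Omega\times(0,T);L^2(\Lambda))$.

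It then remains to show $\tilde\psi=\psi$, so that both subsequences converge weakly to the same limit $\psi$, which additionally inherits predictability. For this I would use the estimate from Remark~\ref{boundpeuhnl}, namely
\[
\|\pe(\uhnl)-\pe(\uhnr)\|^2_{L^2(\Omega;L^2(0,T;\lzlambda))}\leq\frac{\dt}{\epsilon^2}K_0.
\]
Because $\dt=\mathcal{O}(\epsilon^{2+\theta})$ with $\theta>0$, the right-hand side is bounded by $K_0\,\mathcal{O}(\epsilon^{\theta})\to 0$ as $m\to\infty$, so $\pe(\uhnl)-\pe(\uhnr)\to 0$ strongly in $L^2(\Omega;L^2(0,T;L^2(\Lambda)))$. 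Subtracting, for any test element $\varphi$ in that Hilbert space, $\langle\pe(\uhnl)-\pe(\uhnr),\varphi\rangle\to\langle\tilde\psi-\psi,\varphi\rangle$ and also $\to 0$; hence $\tilde\psi=\psi$. Finally, a Urysohn-type argument shows the convergence holds for the full (already extracted) sequence: every subsequence of $(\pe(\uhnr))_m$ and $(\pe(\uhnl))_m$ has a further subsequence converging weakly to the same $\psi$, so the whole sequences converge. The only mild subtlety — not really an obstacle — is bookkeeping the successive subsequence extractions and making sure the extraction is compatible with the one used for $(\uhnr)_m,(\uhnl)_m$ in Proposition~\ref{addreg u}; this is handled by performing all extractions jointly. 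The predictability of $\psi$ comes for free from the weak closedness of $L^2_{\mathcal{P}_T}$ together with the identification $\psi=\tilde\psi$.
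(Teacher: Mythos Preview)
Your proof is correct and follows essentially the same approach as the paper: boundedness from Proposition~\ref{boundpe} gives weak subsequential limits, Remark~\ref{boundpeuhnl} together with $\dt=\mathcal{O}(\epsilon^{2+\theta})$ forces the two limits to coincide, and predictability of $\psi$ is inherited from $(\pe(\uhnl))_m$ via weak closedness of $L^2_{\mathcal{P}_T}$. The Urysohn-type remark at the end is unnecessary since the lemma only claims convergence up to subsequences, but it does no harm.
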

\begin{proof}This is a direct consequence of Proposition \ref{boundpe} and Remark \ref{boundpeuhnl}. Let us mention that the predictability property of $\psi$ with values in $L^2(\Lambda)$ is inherited from $(\pe(\uhnl))_m$ at the limit.
\end{proof}
\begin{lem}\label{CVnpapp}There exist not relabeled subsequences of $\left(-\frac{(\uhnr)^-}{\epsilon}\right)_{m}$ and $\left(\frac{(\uhnr-1)^+}{\epsilon}\right)_{m}$, and $\psi_1,\psi_2$ in $L^2_{\mathcal{P}_T}\big(\Omega\times(0,T);L^2(\Lambda)\big)$ such that 
\[-\frac{(\uhnr)^-}{\epsilon}\rightarrow \psi_1\text{ and }\frac{(\uhnr-1)^+}{\epsilon}\rightarrow \psi_2,\text{ both weakly in }L^2(\Omega;L^2(0,T;\lzlambda))\text{ as }m\rightarrow+\infty.\]
\end{lem}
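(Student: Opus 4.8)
The plan is to extract weakly convergent subsequences directly from the boundedness already established, and then to identify the limits using the previously obtained convergences. By Lemma~\ref{buhnrppanp}, the sequences $\left(-\frac{(\uhnr)^-}{\epsilon}\right)_m$ and $\left(\frac{(\uhnr-1)^+}{\epsilon}\right)_m$ are bounded in the Hilbert space $L^2(\Omega;L^2(0,T;\lzlambda))$, independently of $m$. Hence, by the Banach--Alaoglu theorem (weak compactness of bounded sets in a Hilbert space), one can extract non-relabeled subsequences converging weakly in $L^2(\Omega;L^2(0,T;\lzlambda))$ to some limits, which we call $\psi_1$ and $\psi_2$ respectively.

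It remains only to justify that these limits inherit the predictability, i.e.\ that $\psi_1,\psi_2$ actually belong to $L^2_{\mathcal{P}_T}\big(\Omega\times(0,T);L^2(\Lambda)\big)$ and not merely to $L^2(\Omega;L^2(0,T;\lzlambda))$. Here I would use the same argument as in the proof of Lemma~\ref{CVpeuhnr}: although the approximations involve the right-in-time functions $\uhnr$, by Remark~\ref{remlimcomu} the differences $\left\|\frac{(\uhnr)^-}{\epsilon}-\frac{(\uhnl)^-}{\epsilon}\right\|^2_{L^2(\Omega;L^2(0,T;\lzlambda))}$ and $\left\|\frac{(\uhnr-1)^+}{\epsilon}-\frac{(\uhnl-1)^+}{\epsilon}\right\|^2_{L^2(\Omega;L^2(0,T;\lzlambda))}$ are each bounded by $\frac{\dt}{\epsilon^2}K_0$, which tends to $0$ since $\dt=\mathcal{O}(\epsilon^{2+\theta})$. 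Consequently the corresponding left-in-time sequences $\left(-\frac{(\uhnl)^-}{\epsilon}\right)_m$ and $\left(\frac{(\uhnl-1)^+}{\epsilon}\right)_m$ converge weakly to the \emph{same} limits $\psi_1$ and $\psi_2$; since these left-in-time approximations are, by Lemma~\ref{buhnlppanp}, predictable processes with values in $L^2(\Lambda)$, and since the set of predictable elements of $L^2(\Omega;L^2(0,T;\lzlambda))$ is a closed subspace (hence weakly closed), the limits $\psi_1,\psi_2$ are predictable.

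I do not expect any genuine obstacle here: the statement is a soft compactness argument combined with a passage-to-the-limit identity already available from Remark~\ref{remlimcomu}. The only point requiring a little care — and the closest thing to a ``hard part'' — is the bookkeeping of successive subsequence extractions, so that the same subsequence index $m$ simultaneously realizes the weak convergences of $\uhnr$, $\uhnl$, $\pe(\uhnr)$, $\pe(\uhnl)$, and now of the positive and negative parts; this is handled in the usual way by a finite iterated diagonal extraction, noting that all the objects live over the same probability and time-space measure. One should also remark, as in Remark~\ref{remlimcomu}, that $\psi_1+\psi_2=\psi$ and $\psi_1\le 0\le\psi_2$ with $\psi_1\psi_2=0$, which will be needed later when identifying $\psi$ with an element of $\partial I_{[0,1]}(u)$, but that identification is not part of the present statement.
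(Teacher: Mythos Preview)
Your proof is correct and follows exactly the line the paper takes: the paper's proof is a one-line citation of Lemma~\ref{buhnrppanp}, Lemma~\ref{buhnlppanp} and Remark~\ref{remlimcomu}, and you have simply unpacked that citation explicitly (boundedness $\Rightarrow$ weak compactness, then predictability inherited from the left-in-time approximations via the vanishing of the $r$--$l$ difference). Your additional remarks on $\psi_1+\psi_2=\psi$ and the signs are exactly the content of Remark~\ref{cvtpsi2} and are indeed used later rather than here.
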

\begin{proof}
This is a direct consequence of Lemma \ref{buhnrppanp}, Lemma \ref{buhnlppanp} and Remark \ref{remlimcomu}.
\end{proof}
\begin{lem}\label{CVSppapn} Both strongly in $L^2(\Omega;L^2(0,T;\lzlambda))$, the following convergences hold:
\[
(\uhnr)^-\rightarrow 0 \text{ and }(\uhnr-1)^+\rightarrow 0 \text{ as }m\rightarrow+\infty.
\]
\end{lem}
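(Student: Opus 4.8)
The plan is to show that the two nonnegative quantities $(\uhnr)^-$ and $(\uhnr-1)^+$ tend to $0$ strongly in $L^2(\Omega;L^2(0,T;\lzlambda))$ by exploiting the fact that, by Proposition \ref{boundpe} and Lemma \ref{buhnrppanp}, the rescaled quantities $\frac{(\uhnr)^-}{\epsilon}$ and $\frac{(\uhnr-1)^+}{\epsilon}$ are bounded in that same space \emph{uniformly} in $\epsilon$, while the prefactor $\epsilon=\epsilon_m$ tends to $0$.

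More precisely, first I would write, for each $m$,
\[
\big\|(\uhnrm)^-\big\|_{L^2(\Omega;L^2(0,T;\lzlambda))}
=\epsilon_m\left\|\frac{(\uhnrm)^-}{\epsilon_m}\right\|_{L^2(\Omega;L^2(0,T;\lzlambda))},
\]
and similarly for $(\uhnrm-1)^+$. By Lemma \ref{buhnrppanp} there is a constant $C$ independent of $m$ such that the $L^2(\Omega;L^2(0,T;\lzlambda))$-norm of $\frac{(\uhnrm)^-}{\epsilon_m}$ is bounded by $C$ (and likewise for $\frac{(\uhnrm-1)^+}{\epsilon_m}$). Hence
\[
\big\|(\uhnrm)^-\big\|_{L^2(\Omega;L^2(0,T;\lzlambda))}\leq C\,\epsilon_m
\quad\text{and}\quad
\big\|(\uhnrm-1)^+\big\|_{L^2(\Omega;L^2(0,T;\lzlambda))}\leq C\,\epsilon_m,
\]
and since $\epsilon_m\to 0$ as $m\to+\infty$ the right-hand sides tend to $0$, which is exactly the claimed strong convergence to $0$. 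Note that the hypotheses of Lemma \ref{buhnrppanp} (namely $0<\epsilon_m<1$ and $\Delta t_m=\mathcal{O}(\epsilon_m^{2+\theta})$ for some $\theta>0$) are in force throughout this section, so the uniform bound $C$ is indeed available.

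There is essentially no obstacle here: the statement is a one-line consequence of the uniform $\epsilon$-independent bound on the Moreau--Yosida penalization terms divided by $\epsilon$ (equivalently, of Proposition \ref{boundpe}), combined with $\epsilon_m\to0$. The only point worth a word is that the identity $\pe(\uhnr)=-\frac{(\uhnr)^-}{\epsilon}+\frac{(\uhnr-1)^{+}}{\epsilon}$ together with $(\uhnr)^-\cdot(\uhnr-1)^+=0$ is what guarantees that each of the two pieces separately (not just their difference) is controlled, which is precisely the content of Lemma \ref{buhnrppanp} that we are invoking. One could alternatively phrase the argument via the weak limits $\psi_1,\psi_2$ from Lemma \ref{CVnpapp}, but the direct norm estimate above is the cleanest route and immediately yields the strong (not merely weak) convergence.
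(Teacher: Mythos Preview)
Your proof is correct and follows essentially the same route as the paper: invoke the uniform bound of Lemma~\ref{buhnrppanp} on $\frac{(\uhnr)^-}{\epsilon}$ and $\frac{(\uhnr-1)^+}{\epsilon}$, multiply by $\epsilon_m\to 0$, and conclude. The paper's version is just the squared form of your estimate, $\|(\uhnr)^-\|^2+\|(\uhnr-1)^+\|^2\leq M\epsilon^2$, but the argument is identical.
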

\begin{proof} By Lemma \ref{buhnrppanp}, we have the existence of a constant $M>0$ independent of the regularization and discretization parameters $\epsilon\in \mathbb{R}_+^\star$, $N\in\mathbb{N}^{\star}$ and $h\in \mathbb{R}_+^\star$ such that 
\[
||(\uhnr)^-||^2_{L^2(\Omega;L^2(0,T;\lzlambda))}+||(\uhnr-1)^+||^2_{L^2(\Omega;L^2(0,T;\lzlambda))}\leq M\epsilon^2,
\]
and the announced result holds.
\end{proof}
\begin{remark}\label{cvtpsi2}[Additional informations about $\psi_1$ and $\psi_2$] \quad\\
Firstly, note that since $-\frac{(\uhnr)^-}{\epsilon}\leq 0$ and $\frac{(\uhnr-1)^+}{\epsilon}\geq 0$, then $\psi_1\leq 0$ and $\psi_2 \geq 0$ a.e. in $\Omega\times(0,T)\times\Lambda$.\\ 
Secondly, using the fact that $\pe(\uhnr)=-\frac{(\uhnr)^-}{\epsilon}+\frac{(\uhnr-1)^{+}}{\epsilon}$, one gets owing to Lemmas \ref{CVpeuhnr} and \ref{CVnpapp} that $\psi=\psi_1+\psi_2$.\\
Thirdly, since
\[
\pe(\uhnr)\uhnr=\frac{(\uhnr)^-}{\epsilon}\times (\uhnr)^-+\frac{(\uhnr-1)^+}{\epsilon}\times \left((\uhnr-1)^++1\right),
\]
one obtains thanks to Lemmas \ref{CVnpapp} and \ref{CVSppapn} that
\[
\erwb \int_0^T\int_{\Lambda}\pe(\uhnr(t,x))\uhnr(t,x)\,dx\,dt \erwe\rightarrow \erwb \int_0^T\int_{\Lambda}\psi_2(t,x)\,dx\,dt \erwe,
\]
as $m\rightarrow+\infty$.
\end{remark}

\begin{lem}\label{CVguhnl} There exists a process $g_u$ in $L^2_{\mathcal{P}_T}\big(\Omega\times(0,T); H^1(\Lambda)\big)$ such that, up to subsequences of $(g(\uhnr))_m$ and $(g(\uhnl))_m$ denoted in the same way,
\[
g(\uhnr)\rightarrow g_u \text{ and } g(\uhnl)\rightarrow g_u,\text{ both weakly in }L^2(\Omega;L^2(0,T;L^2(\Lambda)))\text{ as }m\rightarrow+\infty.
\]
\end{lem}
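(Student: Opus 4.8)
The plan is to obtain the limit process $g_u$ from a standard weak-compactness argument, exactly mirroring the proofs of Proposition~\ref{addreg u} and Lemma~\ref{CVpeuhnr}. First I would invoke Lemma~\ref{boundguhnlr}, which tells us that the sequences $(g(\uhnr))_{m}$ and $(g(\uhnl))_{m}$ are bounded in $L^2(\Omega;L^2(0,T;L^2(\Lambda)))$, and Lemma~\ref{boundsnguhnlr}, which bounds $\int_0^T\erwb|g(\uhnr(t))|_{1,h}^2\erwe\,dt$ uniformly in $\epsilon,h,N$. Since $L^2(\Omega;L^2(0,T;L^2(\Lambda)))$ is a Hilbert space, bounded sequences admit weakly convergent subsequences; extracting a common (not relabeled) subsequence, there exist $g_u,\tilde g_u\in L^2(\Omega;L^2(0,T;L^2(\Lambda)))$ with $g(\uhnr)\rightharpoonup g_u$ and $g(\uhnl)\rightharpoonup \tilde g_u$ weakly.

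Next I would show $g_u=\tilde g_u$. This follows from the estimate $\|g(\uhnr)-g(\uhnl)\|_{L^2(\Omega;L^2(0,T;\lzlambda))}^2\leq L_g^2\|\uhnr-\uhnl\|_{L^2(\Omega;L^2(0,T;\lzlambda))}^2\leq L_g^2 K_0\Delta t$ using the Lipschitz property of $g$ in Assumption~$\mathscr{A}_2$ together with~\eqref{210824_05}; since $\Delta t=\Delta t_m\to 0$, the difference tends to $0$ strongly, hence the two weak limits coincide. The predictability of $g_u$ with values in $L^2(\Lambda)$ is inherited at the limit from $(g(\uhnl))_m$, which by Lemma~\ref{boundguhnlr} is a sequence of predictable processes (weak limits of predictable processes remain predictable, the predictable processes forming a closed subspace of $L^2_{\mathcal{P}_T}$).

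The one genuinely new point compared with the earlier lemmas is the claim that $g_u$ belongs to the \emph{better} space $L^2_{\mathcal{P}_T}\big(\Omega\times(0,T);H^1(\Lambda)\big)$ rather than merely $L^2(\Omega;L^2(0,T;L^2(\Lambda)))$. Here I would argue as in Proposition~\ref{addreg u}: the uniform bound on the discrete $H^1$-semi-norms from Lemma~\ref{boundsnguhnlr}, combined with~\eqref{linkgradsnh1} relating $\|\nabla^h w^n_h\|_{(L^2(\Lambda))^d}$ to $|w^n_h|_{1,h}$, gives a uniform bound on the discrete weak gradients $\nabla^h g(\uhnr)$ in $L^2(\Omega;L^2(0,T;(L^2(\Lambda))^d))$; one then extracts a further subsequence so that $\nabla^h g(\uhnr)$ converges weakly to some vector field, identifies that field with $\nabla g_u$ in the distributional sense using the mesh-regularity hypothesis on $\reg$ (the ratio control~\eqref{hoverdkl} ensuring the discrete gradients do not degenerate), and concludes $g_u\in L^2_{\mathcal{P}_T}(\Omega\times(0,T);H^1(\Lambda))$. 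The main obstacle is precisely this gradient identification: one must verify that the piecewise-constant reconstructions and their discrete gradients are compatible in the limit, which is where the admissibility of the meshes and the uniform bound on $\reg$ enter crucially; however, since this is carried out verbatim in \cite{BSZ23} for the solution itself, the cleanest route is simply to cite that argument and note that it applies unchanged to $g(\uhnr)$ in place of $\uhnr$.
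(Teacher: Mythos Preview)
Your proposal is correct and follows essentially the same approach as the paper: the paper's proof simply invokes Lemma~\ref{boundguhnlr}, Lemma~\ref{boundsnguhnlr} and~\eqref{210824_05} and refers to \cite{BSZ23} for the detailed argument, which is exactly the weak-compactness plus discrete-gradient identification you outline. Your write-up is in fact more explicit than the paper's, but the ingredients and the logic are the same.
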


\begin{proof} This is mainly due to Lemma \ref{boundguhnlr}, Lemma \ref{boundsnguhnlr} and \eqref{210824_05}. A detailed proof can be found in \cite{BSZ23}.
\end{proof}

\begin{lem}\label{CVbetauhnl} There exists a process $\beta_u$ in $L^2_{\mathcal{P}_T}\big(\Omega\times(0,T); L^2(\Lambda)\big)$ such that, up to subsequences of $(\beta(\uhnr))_m$ and $(\beta(\uhnl))_m$ denoted in the same way,
\[
\beta(\uhnr)\rightarrow \beta_u \text{ and } \beta(\uhnl)\rightarrow \beta_u,\text{ both weakly in }L^2(\Omega;L^2(0,T;L^2(\Lambda)))\text{ as }m\rightarrow+\infty.
\]
\end{lem}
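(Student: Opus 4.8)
The plan is to argue exactly as for Lemma \ref{CVguhnl}, relying on weak compactness in a Hilbert space together with the $L_\beta$-Lipschitz control of $\beta$, and then to upgrade the predictability of the limit.

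First I would invoke Lemma \ref{boundguhnlr}: the sequence $(\beta(\uhnl))_m$ is bounded in the Hilbert space $L^2(\Omega;L^2(0,T;L^2(\Lambda)))$, so one extracts a (not relabeled) subsequence converging weakly in that space to some limit $\beta_u$. Since, by the same lemma, each $\beta(\uhnl)$ is a predictable process with values in $L^2(\Lambda)$, and the subspace $L^2_{\mathcal{P}_T}\big(\Omega\times(0,T);L^2(\Lambda)\big)$ is strongly closed — hence, being convex, weakly closed by Mazur's lemma — in $L^2\big(\Omega\times(0,T);L^2(\Lambda)\big)$, the limit $\beta_u$ is predictable, i.e. $\beta_u\in L^2_{\mathcal{P}_T}\big(\Omega\times(0,T);L^2(\Lambda)\big)$.

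Next I would transfer the convergence to $\beta(\uhnr)$. Using Assumption $\mathscr{A}_3$ and estimate \eqref{210824_05},
\[
\|\beta(\uhnr)-\beta(\uhnl)\|^2_{L^2(\Omega;L^2(0,T;\lzlambda))}\leq L_\beta^2\,\erww{\|\uhnr-\uhnl\|_{L^2(0,T;\lzlambda)}^2}\leq L_\beta^2 K_0\Delta t,
\]
which tends to $0$ as $m\to+\infty$; hence $\beta(\uhnr)-\beta(\uhnl)\to 0$ strongly, a fortiori weakly, in $L^2(\Omega;L^2(0,T;L^2(\Lambda)))$. Combining this with the weak convergence of $\beta(\uhnl)$ yields $\beta(\uhnr)\rightarrow\beta_u$ weakly in the same space along the same subsequence, which is the claim.

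I do not expect any genuine obstacle here; the only point requiring a little care is the stability of predictability under weak limits, which is handled by the convexity/closedness argument above. As for Lemma \ref{CVguhnl}, one may alternatively refer to \cite{BSZ23}, where the analogous passage to the limit is carried out in detail.
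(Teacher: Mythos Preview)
Your argument is correct and follows essentially the same route as the paper, which simply records the result as a direct consequence of Lemma~\ref{boundguhnlr} and estimate~\eqref{210824_05}. Your explicit justification of the predictability of the limit via the weak closedness of $L^2_{\mathcal{P}_T}$ is a welcome clarification but does not depart from the paper's approach.
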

\begin{proof} This is a direct consequence of Lemma \ref{boundguhnlr} and \eqref{210824_05}.
\end{proof}
\begin{lem}\label{CVfhnl}
The sequence $(f^l_{h,N})_m$ defined by $(\ref{eq:notation_wh})$ and $(\ref{deffnk})$ converges strongly towards $f$ in $L^2(\Omega;L^2(0,T;L^2(\Lambda)))$ as $m\rightarrow +\infty$.
\end{lem}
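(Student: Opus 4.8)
The plan is to reduce the convergence of $(f^l_{h,N})_m$ to two standard facts: the strong convergence in $L^2(\Lambda)$ of piecewise-constant (cell-average) interpolants on admissible meshes whose size tends to $0$, and the contraction property of the averaging operators in $L^2$. First I would recall that, by definition \eqref{deffnk}, for each $n$ the value $f^n_K$ is the mean of $f$ over the space-time cell $(t_n,t_{n+1})\times K$, so that $f^l_{h,N}$ is exactly the $L^2\big(\Omega\times(0,T);L^2(\Lambda)\big)$-orthogonal projection (in the $dx\,dt$ variables) of $f$ onto the finite-dimensional subspace of functions that are constant on each such cell. In particular the operator $f\mapsto f^l_{h,N}$ is linear and $1$-Lipschitz on $L^2(\Omega;L^2(0,T;L^2(\Lambda)))$, uniformly in $m$.

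Next I would establish convergence on a dense subset. Since $\mathscr{C}\big(\overline{\Omega\times[0,T]\times\Lambda}\big)$-type smooth functions, or more precisely $L^2(\Omega)\otimes \mathscr{C}([0,T]\times\overline\Lambda)$, are dense in $L^2(\Omega;L^2(0,T;L^2(\Lambda)))$, it suffices to prove the convergence for $f$ of the form $f(\omega,t,x)=\eta(\omega)\varphi(t,x)$ with $\varphi$ continuous on $[0,T]\times\overline\Lambda$ (hence uniformly continuous). For such $f$ the cell-average $f^n_K$ differs from $f(\omega,t,x)$ by at most $\eta(\omega)\,\omega_\varphi(\Delta t + h)$, where $\omega_\varphi$ is the modulus of continuity of $\varphi$ and we use that $\operatorname{diam}(K)\le h$; integrating over $\Omega\times(0,T)\times\Lambda$ gives
\[
\|f^l_{h,N}-f\|_{L^2(\Omega;L^2(0,T;L^2(\Lambda)))}^2
\le |\Lambda|\,T\,\erww{\eta^2}\;\omega_\varphi(\Delta t+h)^2 \xrightarrow[m\to\infty]{} 0,
\]
since $\Delta t_m=\mathcal{O}(\epsilon_m^{2+\theta})\to 0$ and $h_m\to 0$. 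Then a routine $\varepsilon/3$-argument using the uniform $1$-Lipschitz bound on the averaging operators upgrades this to convergence for arbitrary $f\in L^2_{\mathcal{P}_T}\big(\Omega\times(0,T);L^2(\Lambda)\big)$: given $\delta>0$, pick $f_\delta$ of the above simple form with $\|f-f_\delta\|<\delta/3$, then $\|f^l_{h,N}-f\|\le \|(f-f_\delta)^l_{h,N}\|+\|f_\delta^l_{h,N}-f_\delta\|+\|f_\delta-f\|\le 2\delta/3+\|f_\delta^l_{h,N}-f_\delta\|$, and the middle term is $<\delta/3$ for $m$ large.

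I do not expect any serious obstacle here; the only point requiring a little care is the handling of the $\omega$-variable, i.e.\ checking that one may separate the probabilistic and deterministic variables when invoking density (which is immediate since $L^2(\Omega;L^2(0,T;L^2(\Lambda)))\cong L^2(\Omega)\otimes L^2((0,T)\times\Lambda)$ as a Hilbert space tensor product) and that the simple approximants can be taken predictable — but predictability is not needed for the norm convergence claimed in the statement, only for the identification of the limit's measurability, which is anyway inherited since $f$ itself is predictable and the averaging preserves $\mathcal{F}_{t_n}$-measurability on $(t_n,t_{n+1})$. Alternatively, one may cite the classical estimate for cell-average interpolation on admissible finite-volume meshes from \cite{EymardGallouetHerbinBook} directly, applied $\omega$-wise together with dominated convergence in $\Omega$; I would include a one-line remark to that effect as a shortcut.
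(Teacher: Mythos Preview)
Your argument is correct, modulo the minor slip that simple tensors $\eta(\omega)\varphi(t,x)$ are not themselves dense in $L^2(\Omega;L^2((0,T)\times\Lambda))$ --- only their finite linear combinations are --- but since the averaging operator is linear this costs nothing. Your route, however, differs from the paper's. The paper works $\omega$-by-$\omega$: for $\mathds{P}$-a.e.\ $\omega$ it invokes the standard Steklov-average convergence to get $f^l_{h,N}(\omega)\to f(\omega)$ in $L^2((0,T)\times\Lambda)$, then uses the pointwise domination $\|f^l_{h,N}(\omega)\|_{L^2((0,T)\times\Lambda)}\le \|f(\omega)\|_{L^2((0,T)\times\Lambda)}$ together with Lebesgue's dominated convergence theorem in $\Omega$. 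This is exactly the alternative you sketch in your closing remark. Your main argument instead treats the whole space $L^2(\Omega;L^2(0,T;L^2(\Lambda)))$ at once via the uniform $1$-Lipschitz bound plus density, which is slightly more abstract but avoids selecting a full-measure $\tilde\Omega$ and gives a self-contained $\varepsilon/3$ proof. Both are short and standard; the paper's version is marginally more direct since it leans on a one-line citation for the deterministic convergence.
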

\begin{proof}
There exists $\tilde{\Omega}\subset\mathcal{F}$ with $\mathds{P}({\tilde{\Omega}})=1$ such that, for all $\omega\in\tilde{\Omega}$,
by a standard argument of Steklov average, it is well-known that $\displaystyle\lim_{m\rightarrow\infty}f^l_{h,N}(\omega)= f(\omega)$ in $L^2((0,T)\times \Lambda)$.
Moreover, since $f \in L^2(\Omega; L^2((0,T)\times \Lambda))$, the mapping $\omega\mapsto ||f(\omega)||_{L^2((0,T)\times \Lambda)}$ is an element of $L^2(\Omega)$, and using the fact that $\mathbb{P}$-a.s in $\Omega$
\[
||f^l_{h,N}||_{L^2((0,T)\times\Lambda)}\leq ||f||_{L^2((0,T)\times\Lambda)},
\]
one can conclude thanks to Lebesgue's dominated convergence theorem (\cite[Theorem 1.3.3]{D}) that $(f^l_{h,N})_m$ converges strongly towards $f$ in $L^2(\Omega;L^2((0,T)\times \Lambda))$, hence, by isometry, also in $L^2(\Omega;L^2(0,T;L^2(\Lambda)))$.
\end{proof}

\begin{prop}\label{PTTL} The weak limit $u$ of our finite-volume scheme \eqref{eq:def_u0}-\eqref{equationapprox} introduced in Proposition \ref{addreg u} has $\mathds{P}$-a.s. continuous paths with values in $L^2(\Lambda)$ and satisfies for all $t\in [0,T]$,
\[u(t)=u_0+\int_0^t\left(\Delta u(s)-\psi+\beta_u(s)+f(s)\right)\,ds+\int_0^t g_u(s)\,dW(s),
\quad\]
in $L^2(\Lambda)$ and $\mathds{P}$-a.s. in $\Omega$, where $\Delta$ denotes the Laplace operator on $H^1(D)$ associated with the formal Neumann boundary conditions, $\psi$, $g_u$ and $\beta_u$ respectively are given by Lemmas \ref{CVpeuhnr}, \ref{CVguhnl} and \ref{CVbetauhnl}.
\end{prop}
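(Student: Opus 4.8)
The plan is to pass to the limit in the discrete variational formulation associated with the scheme \eqref{equationapprox}, exploiting all the weak convergences collected in Lemmas~\ref{CVpeuhnr}--\ref{CVfhnl} together with the estimate \eqref{210824_05}. First I would rewrite the scheme in a ``continuous-in-time'' integral form: fixing a control volume $K$ and summing \eqref{equationapprox} over the time steps $0,\dots,n-1$, one obtains a telescoping identity expressing $u_K^{n}-u_K^0$ as a sum of a discrete diffusion term, the term involving $\pe(\unpk)$, the discrete source terms $\beta(\unpk)+\fnk$, and the martingale increment $g(\unk)(W^{n+1}-W^n)$. Testing this against an arbitrary $v\in H^1(\Lambda)$ (i.e.\ multiplying by $m_Kv(x_K)$, or more precisely against the piecewise-constant interpolate of a smooth test function, and summing over $K$), and using the discrete partial integration rule \eqref{PInt}, yields a variational identity for $\uhnr$ holding $\mathds{P}$-a.s.\ and for a.e.\ $t$. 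The diffusion term is handled via the weak gradient $\nabla^h$: one shows, exactly as in \cite{BSZ23}, that $\nabla^h\uhnr$ converges weakly in $L^2(\Omega\times(0,T);(L^2(\Lambda))^d)$ to $\nabla u$ (using Proposition~\ref{addreg u}, Lemma~\ref{remarkuhnrboundintomega} and the consistency of the TPFA gradient on smooth functions), so that the discrete diffusion term converges to $\int_0^t\langle \Delta u(s),v\rangle\,ds$ in the $H^1$--$(H^1)'$ duality.

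Next I would treat the stochastic term. The discrete It\^o sum $\sum_{k=0}^{n-1} g(u_h^k)(W^{k+1}-W^k)$ is, for each fixed $m$, the value at time $t_n$ of the stochastic integral $\int_0^t g(\uhnl(s))\,dW(s)$, since $\uhnl$ is the elementary predictable process equal to $u_h^k$ on $[t_k,t_{k+1})$ (Lemma~\ref{210611_lem01}). By Lemma~\ref{CVguhnl}, $g(\uhnl)\rightharpoonup g_u$ weakly in $L^2(\Omega\times(0,T);L^2(\Lambda))$, and since the It\^o integral is a linear bounded operator from the predictable $L^2$ processes into $L^2(\Omega;\mathscr{C}([0,T];L^2(\Lambda)))$, it is weakly continuous; hence $\int_0^\cdot g(\uhnl)\,dW \rightharpoonup \int_0^\cdot g_u\,dW$ weakly, in an appropriate topology — one tests against $\mathds{1}_{[0,t]}(s)\,\mathds{1}_F(\omega)\,v(x)$ for $F\in\mathcal F$, $v\in H^1(\Lambda)$, and uses that weak limits of integrals against such test functions characterise the limit. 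The source terms $\beta(\unpk)$ and $\fnk$ pass to their weak limits $\beta_u$ and $f$ by Lemmas~\ref{CVbetauhnl} and \ref{CVfhnl}, while $\pe(\uhnr)$ converges weakly to $\psi$ by Lemma~\ref{CVpeuhnr}; the difference between evaluating these terms at $\uhnr$ or $\uhnl$, and between the piecewise-constant-in-time integral and the true integral, is controlled by \eqref{210824_05} and goes to $0$. Passing to the limit $m\to\infty$ in the variational identity, for each fixed $t$ and against each fixed test function, then yields the integral equation in the statement in the weak ($H^1$-dual) sense; since all terms on the right-hand side belong to $L^2(\Omega;L^2(0,T;L^2(\Lambda)))$ (here $\Delta u\in L^2$ by \cite{BBBLV}-type regularity inherited at the limit, or directly via the bound on $\pe(\uhnr)$ in Proposition~\ref{boundpe} passed to the limit), the identity in fact holds in $L^2(\Lambda)$.

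Finally, the $\mathds{P}$-a.s.\ continuity of the paths $t\mapsto u(t)$ with values in $L^2(\Lambda)$ follows from the integral representation itself: the deterministic Bochner integral $\int_0^t(\Delta u(s)-\psi(s)+\beta_u(s)+f(s))\,ds$ is absolutely continuous with values in $L^2(\Lambda)$ (being the primitive of an $L^1(0,T;L^2(\Lambda))$ function, $\mathds{P}$-a.s.), and the stochastic integral $\int_0^t g_u(s)\,dW(s)$ has a continuous $L^2(\Lambda)$-valued modification since $g_u$ is predictable and in $L^2(\Omega\times(0,T);L^2(\Lambda))$. Hence $u$ admits a continuous modification, and one identifies $u$ with it. I expect the main obstacle to be the rigorous justification that the pointwise-in-$t$ weak limits of the various integral terms — especially the stochastic one — indeed assemble into an identity valid for \emph{all} $t\in[0,T]$ simultaneously and $\mathds{P}$-a.s., rather than for a.e.\ $(t,\omega)$; this is resolved by first obtaining the identity tested against a countable dense family of test functions for a.e.\ $t$, then using the continuity of both sides in $t$ (once continuity is established) to extend to all $t$, and a density/separability argument to pass from the countable family to all of $H^1(\Lambda)$. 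The remaining delicate point is ensuring that the product-structure time-discretisation errors (evaluating nonlinearities at time level $n$ versus $n+1$) are genuinely negligible, which is exactly what \eqref{210824_05} and Remark~\ref{boundpeuhnl} are designed to guarantee, since $\Delta t=\mathcal O(\epsilon^{2+\theta})\to 0$.
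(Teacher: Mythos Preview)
Your approach is workable but takes a different route from the paper in the treatment of the time variable. The paper does \emph{not} sum the scheme to a fixed time and pass to the limit pointwise in $t$; instead it tests \eqref{equationapprox} against $\mathds{1}_A\,\xi(t)\,\varphi(x_K)$ with $A\in\mathcal{A}$, $\xi\in\mathscr{D}(\R)$, $\xi(T)=0$, and $\varphi$ smooth with $\nabla\varphi\cdot\mathbf{n}=0$ on $\partial\Lambda$, then sums over $K$, integrates in $t$, sums over $n$, and takes expectation. A discrete Abel summation turns the time-difference term $S_{1,m}$ into an expression in $\xi'$ plus a boundary term at $t=0$ (the condition $\xi(T)=0$ kills the other boundary), and each of the six resulting sums passes to the limit using only the weak $L^2(\Omega\times(0,T)\times\Lambda)$ convergences of Lemmas~\ref{CVpeuhnr}--\ref{CVfhnl}, yielding the weak-in-time identity \eqref{210830_08bis}. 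The pointwise equation and the path continuity are then obtained \emph{a posteriori} by invoking \cite[Proposition~4.5]{BSZ23} and the density result \cite[Theorem~1.1]{Droniou}. What this buys is that no pointwise-in-$t$ compactness is ever required: everything is integrated against $\xi$.

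In your cumulative-sum approach, the step ``pass to the limit for each fixed $t$'' is more delicate than you indicate. The right side does converge weakly in $L^2(\Omega;L^2(\Lambda))$ for each fixed $t$ (by linearity and continuity of $\psi\mapsto\int_0^t\psi\,ds$ and of the It\^o map), and hence so does $\langle\uhnr(t),v\rangle$ via the discrete identity; but identifying this limit with $\langle u(t),v\rangle$ is not immediate, since $u$ is so far only an a.e.-defined weak $L^2(\Omega\times(0,T);L^2(\Lambda))$ limit. The clean fix is to view both sides of your cumulative identity as processes in $L^2(\Omega;L^2(0,T;L^2(\Lambda)))$ and pass to the limit there (the integral maps are linear continuous into that space), obtain the equation a.e.\ in $(\omega,t)$, note that the right side has a continuous-in-$t$ representative, and redefine $u$ accordingly --- which is effectively what the paper's reference to \cite{BSZ23} packages. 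One minor imprecision: neither argument shows $\Delta u\in L^2$; the paper only concludes that $\int_0^t\Delta u\,ds\in L^2(\Lambda)$, because every other term in the identity lies in $L^2(\Lambda)$.
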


\begin{proof} Let $A\in\mathcal{A}$, $\xi\in \mathscr{D}(\re)$ with $\xi(T)=0$ and $\varphi\in \mathscr{D}(\re^d)$ with $\nabla\varphi\cdot\mathbf{n}=0$ on $\partial\Lambda$, where we denote $\mathscr{D}(D):=\mathscr{C}_c^\infty(D)$ for any open subset $D\subseteq \re^d,d=2,3$. We introduce the discrete function $\varphi_h : \Lambda\rightarrow \R$ defined by $\displaystyle\varphi_h(x)=\sum_{K\in \Tau}\mathds{1}_K(x)\varphi(x_K)$ for any $x\in \Lambda$.\\
For $K\in\Tau$, $n\in\{0,\dots,N-1\}$ and $t\in[t_n,t_{n+1})$ we multiply \eqref{equationapprox} with $\mathds{1}_A\xi(t)\varphi(x_K)$ to obtain
\begin{align}\label{220814}
\begin{split}
&\mathds{1}_A\xi(t)\frac{m_K}{\dt}\Big(u_K^{n+1}-\unk-g(\unk)\big(W^{n+1}-W^n\big)\Big)\varphi(x_K)\\
&+\mathds{1}_A\xi(t)\sum_{\sigma=K|L\in\edgesint\cap\edges_K}\frac{m_\sigma}{\dkl}(u_K^{n+1}-\unpl)\varphi(x_K)\\
&+\mathds{1}_A\xi(t) m_K \pe(\unpk)\varphi(x_K)\\
=\,&\mathds{1}_A\xi(t)m_K\left(\beta(\unpk)+\fnk\right)\varphi(x_K).
\end{split}
\end{align}
Firstly, we sum \eqref{220814} over each control volume $K\in\Tau$, we integrate over the time interval $[t_{n},t_{n+1}]$, then we sum over $n=0,\dots,N-1$, and finally we take the expectation to obtain 
\begin{equation}\label{discretequforlimit}
S_{1,m}+S_{2,m}+S_{3,m}+S_{4,m}=S_{5,m}+S_{6,m}
\end{equation}
where
\begin{eqnarray*}
S_{1,m}&=&\mathbb{E}\left[\sum_{n=0}^{N-1}\int_{t_n}^{t_{n+1}}\sum_{K\in\Tau}\mathds{1}_A\xi(t)m_K\frac{u_K^{n+1}-\unk}{\dt}\varphi(x_K)\,dt\right]\\
S_{2,m}&=&-\mathbb{E}\left[\sum_{n=0}^{N-1}\int_{t_n}^{t_{n+1}}\sum_{K\in\Tau}\mathds{1}_A\xi(t)m_Kg(\unk)\frac{W^{n+1}-W^n}{\dt}\varphi(x_K)\,dt\right]\\
S_{3,m}&=&\mathbb{E}\left[\sum_{n=0}^{N-1}\int_{t_n}^{t_{n+1}}\sum_{K\in\Tau}\mathds{1}_A\xi(t)\sum_{\sigma=K|L\in\edgesint\cap\edges_K}\frac{m_\sigma}{\dkl}(u_K^{n+1}-\unpl)\varphi(x_K)\,dt\right]\\
S_{4,m}&=&\mathbb{E}\left[\sum_{n=0}^{N-1}\int_{t_n}^{t_{n+1}}\sum_{K\in\Tau}\mathds{1}_A\xi(t) m_K\pe(\unpk)
\varphi(x_K)\,dt\right]\\
S_{5,m}&=&\mathbb{E}\left[\sum_{n=0}^{N-1}\int_{t_n}^{t_{n+1}}\sum_{K\in\Tau}\mathds{1}_A\xi(t)m_K \beta(\unpk)\varphi(x_K)\,dt\right]\\
S_{6,m}&=&\mathbb{E}\left[\sum_{n=0}^{N-1}\int_{t_n}^{t_{n+1}}\sum_{K\in\Tau}\mathds{1}_A\xi(t)m_K \fnk\varphi(x_K)\,dt\right].
\end{eqnarray*}
Let us study separately the limit as $m$ goes to $+\infty$ of $S_{1,m}$, $S_{2,m}$, $S_{3,m}$, $S_{4,m}$, $S_{5,m}$ and $S_{6,m}$.\medskip\\
$\bullet$ Study of $S_{1,m}$: Following \cite[Proposition 4.5]{BSZ23}, one proves thanks to Proposition \ref{addreg u} and a discrete integration by parts formula, that up to a subsequence denoted in the same way 
\begin{align*}
S_{1,m}\xrightarrow[m\rightarrow +\infty]{}-\mathbb{E}\left[\mathds{1}_A\int_0^T\int_\Lambda u(t,x)\xi'(t)\varphi(x)\,dx\,dt\right]-\mathbb{E}\left[\mathds{1}_A\int_\Lambda u_0(x)\xi(0)\varphi(x)\,dx\right].
\end{align*}
$\bullet$ Study of $S_{2,m}$: 
Thanks to Lemma \ref{CVguhnl} and the properties of the stochastic integral, one shows that, up to a subsequence denoted in the same way (see \cite[Proposition 4.5]{BSZ23})
\begin{align*}
S_{2,m}\xrightarrow[m\rightarrow +\infty]{}\mathbb{E}\left[\mathds{1}_A\int_0^T\int_\Lambda \int_0^t g_u(s,x)\,dW(s)\xi'(t)\varphi(x)\,dx\,dt\right].
\end{align*}
$\bullet$ Study of $S_{3,m}$: 
Following the arguments we developed in \cite[Proposition 4.16]{BNSZ22}, one shows that 
\begin{align*}
S_{3,m}\xrightarrow[m\rightarrow +\infty]{} -\mathbb{E}\left[\mathds{1}_A\int_0^T\int_\Lambda\xi(t)\Delta\varphi(x)u(t,x)\,dx\,dt\right].
\end{align*}
$\bullet$ Study of $S_{4,m}$: Using Lemma \ref{CVpeuhnr}, one proves that
\[
S_{4,m}\xrightarrow[m\rightarrow +\infty]{} \mathbb{E}\left[\mathds{1}_A\int_0^T \int_{\Lambda} \psi(t,x) \varphi(x)\xi(t)\,dx\,dt\right].
\]
To do so, we use the following decomposition for $S_{4,m}=S_{4,m}-\tilde{S}_{4,m}+\tilde{S}_{4,m}$, where
\begin{align*}
\tilde{S}_{4,m}=\mathbb{E}\left[\mathds{1}_A\int_0^T\int_{\Lambda} \pe(\uhnr)\varphi(x)\xi(t)\,dx\,dt\right]\xrightarrow[m\rightarrow +\infty]{}\mathbb{E}\left[\mathds{1}_A\int_0^T \int_{\Lambda} \psi(t,x) \varphi(x)\xi(t)\,dx\,dt\right].
\end{align*}
Note that 
\begin{align*}
|S_{4,m}-\tilde{S}_{4,m}|&=&\left|\mathbb{E}\left[\mathds{1}_A\sum_{n=0}^{N-1}\sum_{K\in\Tau}\int_{t_n}^{t_{n+1}}\int_{K} \pe(\unpk)\big(\varphi(x_K)-\varphi(x)\big)\xi(t)\,dx\,dt\right]\right|\\
&\leq& h||\xi||_{\infty} ||\nabla \varphi||_{\infty} ||\pe(\uhnr) ||_{L^1\left(\Omega;L^1(0,T;L^1(\Lambda))\right)}\xrightarrow[m\rightarrow +\infty]{} 0.
\end{align*}
$\bullet$ Study of $S_{5,m}$: 
Thanks to Lemma \ref{CVbetauhnl}, one shows as for the study of $S_{4,m}$ that 
\begin{align*}
S_{5,m}\xrightarrow[m\rightarrow +\infty]{} \mathbb{E}\left[\mathds{1}_A\int_0^T \int_{\Lambda} \beta_u(t,x)\varphi(x)\xi(t)\,dx\,dt\right].
\end{align*}
$\bullet$ Study of $S_{6,m}$: using Lemma \ref{CVfhnl} and the fact that $f$ also belongs to $L^1(\Omega;L^1(0,T;L^1(\Lambda)))$,
one proves that
\begin{align*}
S_{6,m}\xrightarrow[m\rightarrow +\infty]{} \mathbb{E}\left[\mathds{1}_A\int_0^T \int_{\Lambda} f(t,x)\varphi(x)\xi(t)\,dx\,dt\right].
\end{align*}
Indeed,
\begin{align*}
&\left|S_{6,m}-\mathbb{E}\left[\mathds{1}_A\int_0^T \int_{\Lambda} f(t,x)\varphi(x)\xi(t)\,dx\,dt\right]\right|\\
=&\left|\mathbb{E}\left[\sum_{n=0}^{N-1}\int_{t_n}^{t_{n+1}}\sum_{K\in\Tau}\mathds{1}_A\xi(t)m_K \fnk\varphi(x_K)\,dt\right]-\mathbb{E}\left[\mathds{1}_A\int_0^T \int_{\Lambda} f(t,x)\varphi(x)\xi(t)\,dx\,dt\right]\right|\\
=&\left|\mathbb{E}\left[\mathds{1}_A\sum_{n=0}^{N-1}\sum_{K\in\Tau}\int_{t_n}^{t_{n+1}}\int_K\xi(t)\left\{\fnk\varphi(x_K)-f(t,x)\varphi(x)\right\}\,dx\,dt\right]\right|\\
=&\left|\mathbb{E}\left[\mathds{1}_A\sum_{n=0}^{N-1}\sum_{K\in\Tau}\int_{t_n}^{t_{n+1}}\int_K\xi(t)\Big\{\varphi(x_K)\big(\fnk-f(t,x)\big)+f(t,x)\big(\varphi(x_K)-\varphi(x)\big)\Big\}\,dx\,dt\right]\right|\\
\leq& ||\xi||_{\infty} ||\varphi||_{\infty} ||f^l_{h,N}-f||_{L^1\left(\Omega;L^1((0,T)\times \Lambda)\right)}+h||\xi||_{\infty}||\nabla\varphi||_{\infty} ||f||_{L^1(\Omega;L^1(0,T;L^1(\Lambda)))}\xrightarrow[m\rightarrow+\infty]{}0.
\end{align*}
Gathering all the previous convergence results, we can pass to the limit in \eqref{discretequforlimit}, and, by using the density of the set $\{\Psi\in\mathscr{D}(\mathbb{R}^d)\ | \ \nabla\Psi\cdot\mathbf{n}=0 \ \text{on} \ \partial\Lambda\}$ in $H^1(\Lambda)$ (given by \cite[Theorem 1.1]{Droniou}), we get that $\mathds{P}$-a.s. in $\Omega$, for all $\xi\in \big\{\phi\in\mathscr{D}(\re): \phi(T)=0\big\}$ and all $\varphi\in H^1(\Lambda)$
\begin{align}\label{210830_08bis}
\begin{split}
&-\int_0^T\int_\Lambda\left(u(t,x)-\int_0^{t}g_u(s,x)\,dW(s)\right)\xi'(t)\varphi(x)\,dx\,dt-\int_\Lambda u_0(x)\xi(0)\varphi(x)\,dx\\
=&-\int_0^T \int_\Lambda \nabla u(t,x) \cdot \nabla \varphi(x)\xi(t)\,dx\,dt- \int_0^T\int_\Lambda \psi(t,x)\varphi(x)\xi(t)\,dx\,dt\\
&+\int_0^T\int_\Lambda \beta_u(t,x)\varphi(x)\xi(t)\,dx\,dt+\int_0^T\int_\Lambda f(t,x)\varphi(x)\xi(t)\,dx\,dt.
\end{split}
\end{align}
By identically repeating the arguments developed in the proof of \cite[Proposition 4.5]{BSZ23}, we first obtain that $u\in L^2\big(\Omega; \mathscr{C}([0,T];L^2(\Lambda))\big)$, and then that for any $t$ in $[0,T]$
\begin{align*}
u(t)-u(0)-\int_0^t g_u(s)\,dW(s)+\int_0^t \psi(s)ds- \int_0^t\beta_u(s)\,ds- \int_0^tf(s)\,ds=\int_0^t\Delta u(s)\,ds,
\end{align*}
in $H^1(\Lambda)^{\ast}$ and $\mathds{P}$-a.s. in $\Omega$. To conclude, let us mention that since the left-hand side of the above equality is in $L^2(\Lambda)$, it also holds in $L^2(\Lambda)$.
\end{proof}

\begin{lem}{(Stochastic energy equality)
}\label{lemenergy}
For any $c>0$, the stochastic process $u$ introduced in Proposition \ref{addreg u} satisfies the following stochastic energy equality: 
\begin{align}\label{energy}
\begin{split}
&e^{-ct} \E\left[||u(t) ||^2_{L^2(\Lambda)}\right]+2\int_0^te^{-cs} \E\left[||\nabla u(s) ||^2_{L^2(\Lambda)}\right]\,ds\\
=\,& \E\left[|| u_0||^2_{L^2(\Lambda)}\right]-c\int_0^t e^{-cs} \E\left[|| u(s)||^2_{L^2(\Lambda)}\right]\,ds+\int_0^t e^{-cs} \E\left[||g_u(s)||^2_{L^2(\Lambda)}\right]\,ds\\
&+2\int_{0}^te^{-cs} \E\left[ \int_{\Lambda}\big(\beta_u(s,x)+f(s,x)-\psi(s,x)\big)u(s,x)\,dx \right]\,ds, \quad \forall t\in [0,T].
\end{split}
\end{align}
\end{lem}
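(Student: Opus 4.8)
The plan is to apply the It\^o formula for the square of the $L^2(\Lambda)$-norm in the variational (Gelfand triple) framework $H^1(\Lambda)\hookrightarrow L^2(\Lambda)\hookrightarrow H^1(\Lambda)^\ast$ to the process $u$, and then to bring in the weight $e^{-ct}$ through the scalar It\^o product rule. First I would recast the integral identity of Proposition~\ref{PTTL} in the variational form
\[
u(t)=u_0+\int_0^t F(s)\,ds+\int_0^t g_u(s)\,dW(s),\qquad F:=\Delta u-\psi+\beta_u+f,
\]
and verify that the hypotheses of the variational It\^o formula (see, e.g., \cite{PrevotRockner}, or \cite{BBBLV} for the analogous computation on the regularised problem~\eqref{eqeps}) are met here. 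Indeed, by Proposition~\ref{addreg u} and Remark~\ref{240527_01} the process $u$ is predictable and belongs to $L^2_{\mathcal P_T}\big(\Omega\times(0,T);H^1(\Lambda)\big)$; since $\Delta:H^1(\Lambda)\to H^1(\Lambda)^\ast$ is bounded and $\psi,\beta_u,f$ all lie in $L^2(\Omega;L^2(0,T;L^2(\Lambda)))\hookrightarrow L^2(\Omega;L^2(0,T;H^1(\Lambda)^\ast))$ by Lemmas~\ref{CVpeuhnr}, \ref{CVbetauhnl} and Assumption~$\mathscr{A}_4$, the drift $F$ lies in $L^2(\Omega;L^2(0,T;H^1(\Lambda)^\ast))$; the diffusion $g_u$ lies in $L^2(\Omega;L^2(0,T;L^2(\Lambda)))$ by Lemma~\ref{CVguhnl}; and $u_0$ is $\mathcal F_0$-measurable in $L^2(\Omega;L^2(\Lambda))$ by~$\mathscr{A}_1$. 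The It\^o formula then yields, for every $t\in[0,T]$ and $\mathds P$-a.s.,
\begin{align*}
\|u(t)\|_{L^2(\Lambda)}^2=\|u_0\|_{L^2(\Lambda)}^2&+2\int_0^t\langle F(s),u(s)\rangle\,ds+\int_0^t\|g_u(s)\|_{L^2(\Lambda)}^2\,ds\\
&+2\int_0^t\big(g_u(s),u(s)\big)_{L^2(\Lambda)}\,dW(s),
\end{align*}
the associated continuous $L^2(\Lambda)$-modification being the one already obtained in Proposition~\ref{PTTL}.

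Next I would use the definition of the Neumann Laplacian on $H^1(\Lambda)$ to write $\langle\Delta u(s),u(s)\rangle=-\|\nabla u(s)\|_{L^2(\Lambda)}^2$, so that $\langle F(s),u(s)\rangle=-\|\nabla u(s)\|_{L^2(\Lambda)}^2+\int_\Lambda\big(\beta_u(s,x)+f(s,x)-\psi(s,x)\big)u(s,x)\,dx$, and then apply the scalar It\^o product rule to $t\mapsto e^{-ct}\|u(t)\|_{L^2(\Lambda)}^2$ (equivalently, integrate the previous identity against $c\,e^{-cs}$ on $(0,t)$ and reorganise), which gives the pathwise identity
\begin{align*}
&e^{-ct}\|u(t)\|_{L^2(\Lambda)}^2+2\int_0^t e^{-cs}\|\nabla u(s)\|_{L^2(\Lambda)}^2\,ds\\
&\quad=\|u_0\|_{L^2(\Lambda)}^2-c\int_0^t e^{-cs}\|u(s)\|_{L^2(\Lambda)}^2\,ds+\int_0^t e^{-cs}\|g_u(s)\|_{L^2(\Lambda)}^2\,ds\\
&\qquad+2\int_0^t e^{-cs}\int_\Lambda\big(\beta_u(s,x)+f(s,x)-\psi(s,x)\big)u(s,x)\,dx\,ds+2\int_0^t e^{-cs}\big(g_u(s),u(s)\big)_{L^2(\Lambda)}\,dW(s).
\end{align*}

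Finally I would take the expectation. Since $g$ is Lipschitz with $\operatorname{supp}g\subset[0,1]$ it is bounded, hence $\|g_u\|_{L^\infty(\Omega\times(0,T)\times\Lambda)}\le\|g\|_\infty$, and therefore $\E\int_0^T e^{-2cs}\big|(g_u(s),u(s))_{L^2(\Lambda)}\big|^2\,ds\le\|g\|_\infty^2|\Lambda|\,\E\int_0^T\|u(s)\|_{L^2(\Lambda)}^2\,ds<+\infty$ by Proposition~\ref{addreg u}; thus $s\mapsto\int_0^s e^{-cr}\big(g_u(r),u(r)\big)_{L^2(\Lambda)}\,dW(r)$ is a genuine martingale, so its expectation at time $t$ vanishes and we recover precisely~\eqref{energy}.

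The step I expect to be the main obstacle is the rigorous justification of the variational It\^o formula in this situation: one must check carefully that the weak limit $u$, together with the drift $F$ read off from Proposition~\ref{PTTL} and the diffusion $g_u$, genuinely fits the Gelfand-triple hypotheses (predictability, and the correct joint integrability in $\Omega$ and time with values in $H^1(\Lambda)$, $H^1(\Lambda)^\ast$ and $L^2(\Lambda)$, respectively), and that the continuous modification delivered by the formula coincides with the path-continuous process exhibited in Proposition~\ref{PTTL}. Once this identification is secured, the integration by parts for the Neumann Laplacian, the product rule with $e^{-ct}$, and the vanishing of the It\^o integral are all routine.
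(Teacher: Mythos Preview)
Your proposal is correct and follows essentially the same approach as the paper: the paper's proof consists of a single sentence stating that \eqref{energy} is a direct application of It\^o's formula to $u$ with the functional $\mathscr{F}(t,v)=e^{-ct}\|v\|_{L^2(\Lambda)}^2$ on $[0,T]\times L^2(\Lambda)$, which is exactly what you carry out in detail (splitting it into It\^o for $\|u\|^2$ plus the scalar product rule with $e^{-ct}$ is equivalent to applying It\^o directly to the time-dependent functional). Your careful verification of the Gelfand-triple hypotheses and of the martingale property of the stochastic integral simply fills in what the paper leaves implicit.
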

\begin{proof} It is a direct application of It\^o formula to the stochastic process $u$ and the functional $\mathscr{F}: (t,v)\mapsto e^{-ct}||v||^2_{L^2(\Lambda)}$ defined on $[0,T]\times L^2(\Lambda)$. 
\end{proof}
\subsection{Identification of weak limits coming from the non-linear terms}
We state here a result proved in \cite[Lemma 4.7]{BSZ23}, which gives a lower bound for the inferior limit of the following quantity
\begin{align*}
\int_0^T\int_{0}^{t}e^{-cs}\E[|u^r_{h_m,N_m}(s)|^2_{1,h_m}]\,ds\,dt,
\end{align*}
for any $c>0$. This boundedness result will be one of the key points in identifying the weak limits $\psi$, $g_u$, $\beta_u$ coming from the discretization of the non linear terms $\pe(\uhnr), g(\uhnr)$ and $\beta(\uhnr)$, see Lemmas \ref{CVpeuhnr}, \ref{CVguhnl}, \ref{CVbetauhnl}.
\begin{lem}\label{keylemma} For any $c>0$, the stochastic process $u$ introduced in Proposition \ref{addreg u} satisfies the following inequality:
\begin{align}\label{liminfinequality}
\hspace*{-0.4cm}\int_0^T\int_0^t e^{-cs}\E\left[\int_{\Lambda}|\nabla u(x,s) |^2\,dx\right]\,ds\,dt\leq\liminf_{m\rightarrow+\infty}\int_0^T\int_{0}^{t}e^{-cs}\E[|u^r_{h_m,N_m}(s)|^2_{1,h_m}]\,ds\,dt.
\end{align}
\end{lem}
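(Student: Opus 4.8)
The plan is to argue by duality, exploiting that by Proposition~\ref{addreg u} the limit $u$ already belongs to $L^2_{\mathcal{P}_T}\big(\Omega\times(0,T);H^1(\Lambda)\big)$, so that $\nabla u$ is at hand. If the right-hand side of \eqref{liminfinequality} is $+\infty$ there is nothing to prove; otherwise I first pass to a (not relabelled) subsequence along which $\uhnr\rightharpoonup u$ weakly in $L^2(\Omega;L^2(0,T;L^2(\Lambda)))$ (by Proposition~\ref{addreg u}) and along which the right-hand side of \eqref{liminfinequality} converges to its lower limit $\mathcal{G}$. By Fubini's theorem, with $\rho(s):=(T-s)e^{-cs}$ (nonnegative, continuous and bounded on $[0,T]$), both sides of \eqref{liminfinequality} take the form $\int_0^T\rho(s)(\cdots)\,ds$, so the claim reads $\int_0^T\rho(s)\,\E\big[\|\nabla u(s)\|_{L^2(\Lambda)}^2\big]\,ds\le\mathcal{G}$. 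Introducing the finite measure $\nu(d\omega,ds):=\rho(s)\,\mathds{P}(d\omega)\,ds$ and the Hilbert space $H:=L^2\big(\Omega\times(0,T),\nu;L^2(\Lambda)^d\big)$, this becomes $\|\nabla u\|_H^2\le\mathcal{G}$; and since the elementary fields $\Phi(\omega,s,x)=\sum_{l}\mathds{1}_{A_l}(\omega)\mathds{1}_{I_l}(s)\varphi_l(x)$, with $A_l\in\mathcal{A}$, $I_l\subset(0,T)$ an interval and $\varphi_l\in\mathscr{D}(\Lambda)^d$, are dense in $H$, it suffices to show $\langle\nabla u,\Phi\rangle_H\le\mathcal{G}^{1/2}\|\Phi\|_H$ for every such $\Phi$ and then pass to the supremum over $\Phi$.

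Fixing one such $\Phi$, I integrate by parts in $x$ (legitimate since each $\varphi_l$ is compactly supported in $\Lambda$) and invoke the weak convergence of Proposition~\ref{addreg u} (the test field $\rho\,\diver\Phi$ lies in $L^2(\Omega\times(0,T)\times\Lambda)$) to get $\langle\nabla u,\Phi\rangle_H=-\lim_{m}\int_0^T\rho(s)\,\E\big[\int_\Lambda\uhnr(s)\,\diver\Phi(s)\,dx\big]\,ds$. For fixed $m$ and a.e.\ $(\omega,s)$, writing $\uhnr(\omega,s,\cdot)\equiv u_h^{n+1}(\omega)$ on $[t_n,t_{n+1})$ and $\varphi^{(\omega,s)}:=\sum_{l:(\omega,s)\in A_l\times I_l}\varphi_l\in\mathscr{D}(\Lambda)^d$, a discrete integration by parts over the mesh (the boundary edges drop out, since $\varphi^{(\omega,s)}$ vanishes near $\partial\Lambda$) gives
\[\int_\Lambda u_h^{n+1}\,\diver\varphi^{(\omega,s)}\,dx=\sum_{\sigma=K|L\in\edgesint}(u_K^{n+1}-u_L^{n+1})\int_\sigma\varphi^{(\omega,s)}\cdot\mathbf{n}_{K,\sigma}\,d\gamma,\]
and Cauchy--Schwarz bounds the modulus of the right-hand side by $|u_h^{n+1}|_{1,h}\,\Big(\sum_{\sigma=K|L\in\edgesint}\tfrac{\dkl}{m_\sigma}\big|\int_\sigma\varphi^{(\omega,s)}\cdot\mathbf{n}_{K,\sigma}\,d\gamma\big|^2\Big)^{1/2}$.

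The crucial ingredient is then the classical finite-volume consistency estimate: uniformly over admissible meshes with $\reg$ bounded (this is where \eqref{hoverdkl} is used), one has, for every $\varphi\in\mathscr{D}(\Lambda)^d$,
\[\sum_{\sigma=K|L\in\edgesint}\frac{\dkl}{m_\sigma}\Big|\int_\sigma\varphi\cdot\mathbf{n}_{K,\sigma}\,d\gamma\Big|^2\le\|\varphi\|_{L^2(\Lambda)}^2+\mathcal{O}(h),\]
the $\mathcal{O}(h)$ depending on $\varphi$ and $\reg$ only; heuristically it comes from replacing $\int_\sigma\varphi\cdot\mathbf{n}_{K,\sigma}\,d\gamma$ by $m_\sigma\,\varphi(\overline{x}_\sigma)\cdot\mathbf{n}_{K,\sigma}$, with $\overline{x}_\sigma$ the barycentre of $\sigma$ (the remainder absorbed thanks to $\sum_{\sigma\in\edgesint}\dkl m_\sigma\le d|\Lambda|$), together with the Green identity $m_K\,\mathrm{Id}=\sum_{\sigma\in\mathcal{E}_K}m_\sigma\,(\overline{x}_\sigma-x_K)\otimes\mathbf{n}_{K,\sigma}$; it is exactly the adjoint-consistency step in the convergence analysis of TPFA schemes, see \cite{EymardGallouetHerbinBook}. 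As only finitely many fields $\varphi^{(\omega,s)}$ occur, the $\mathcal{O}(h)$ is uniform in $(\omega,s)$; inserting this into the previous bound, using $\|\varphi^{(\omega,s)}\|_{L^2(\Lambda)}=\|\Phi(\omega,s,\cdot)\|_{L^2(\Lambda)^d}$ and Cauchy--Schwarz with respect to $\nu$, I obtain
\[\Big|\int_0^T\rho(s)\,\E\big[\int_\Lambda\uhnr(s)\,\diver\Phi(s)\,dx\big]\,ds\Big|\le\Big(\int_0^T\rho(s)\,\E\big[|\uhnr(s)|_{1,h}^2\big]\,ds\Big)^{1/2}\big(\|\Phi\|_H+\mathcal{O}(h_m)\big),\]
and passing to the limit along the chosen subsequence (the first factor tends to $\mathcal{G}^{1/2}$, the $\mathcal{O}(h_m)$ to $0$) gives $\langle\nabla u,\Phi\rangle_H\le\mathcal{G}^{1/2}\|\Phi\|_H$, as required.

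I expect this last point to be the main obstacle: the sharp constant $1$ in the consistency estimate above is essential, and a naive substitute (bounding $\|\nabla u\|$ by the $L^2$-norm of the weak limit of the reconstructed gradient $\nabla^h\uhnr$ of Remark~\ref{remarkforuhnrboundiii} via weak lower semicontinuity) would only give a dimension-dependent constant, which would not suffice for the subsequent strong-convergence argument. Keeping the finite-volume consistency estimate sharp and uniform in $m$ and in $(\omega,s)$ is where the real work lies; everything else is bookkeeping with the weak convergences and uniform bounds already collected in Section~\ref{estimates} and Proposition~\ref{addreg u}.
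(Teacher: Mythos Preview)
The paper does not prove this lemma itself; it simply records the statement and points to \cite[Lemma~4.7]{BSZ23}. So there is no ``paper's own proof'' to compare against beyond that citation. Your duality strategy is the natural one and is essentially what underlies the cited result: reduce to a one-parameter weight $\rho(s)=(T-s)e^{-cs}$ via Fubini, pass the divergence onto the piecewise-constant $u^r_{h,N}$, sum by parts over the mesh, and control the dual quantity by $\|\Phi\|_H$. The bookkeeping (subsequences, elementary fields, uniformity in $(\omega,s)$) is handled correctly.

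The one place where your sketch is genuinely incomplete is the justification of the sharp constant~$1$ in
\[
\sum_{\sigma=K|L\in\edgesint}\frac{d_{K|L}}{m_\sigma}\Big|\int_\sigma\varphi\cdot\mathbf{n}_{K,\sigma}\,d\gamma\Big|^2
\;\le\;\|\varphi\|_{L^2(\Lambda)^d}^2+\mathcal{O}(h).
\]
Your heuristic invokes the identity $m_K\,\mathrm{Id}=\sum_{\sigma\in\mathcal{E}_K}m_\sigma\,(\overline{x}_\sigma-x_K)\otimes\mathbf{n}_{K,\sigma}$, but what you actually need cell by cell is $\sum_{\sigma\in\mathcal{E}_K}m_\sigma\,d(x_K,\sigma)\,\mathbf{n}_{K,\sigma}\otimes\mathbf{n}_{K,\sigma}=m_K\,\mathrm{Id}$, and these coincide only when $\overline{x}_\sigma-x_K$ is parallel to $\mathbf{n}_{K,\sigma}$ for every face. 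That is \emph{superadmissibility}, which Definition~\ref{defmesh} does not assume; for a generic Voronoi cell the tangential part $(\overline{x}_\sigma-x_K)^{\perp}$ is nonzero and the per-cell matrix can have largest eigenvalue strictly above $m_K$. So the Green identity alone does not deliver constant~$1$.

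What rescues the estimate---and this is the step missing from your heuristic---is a cancellation across neighbouring cells that uses admissibility itself. Since $x_L-x_K$ is parallel to $\mathbf{n}_{K,\sigma}$, one has $(\overline{x}_\sigma-x_K)^{\perp}=(\overline{x}_\sigma-x_L)^{\perp}$ for every interior edge $\sigma=K|L$. Writing $\bar\varphi_\sigma=\varphi(x_K)+\mathcal{O}(h)$ on the $K$-side and $\bar\varphi_\sigma=\varphi(x_L)+\mathcal{O}(h)$ on the $L$-side, the ``tangential defect'' contributions
\[
m_\sigma\big[\varphi(x_K)\cdot(\overline{x}_\sigma-x_K)^{\perp}\big]\big[\varphi(x_K)\cdot\mathbf{n}_{K,\sigma}\big]
+m_\sigma\big[\varphi(x_L)\cdot(\overline{x}_\sigma-x_L)^{\perp}\big]\big[\varphi(x_L)\cdot\mathbf{n}_{L,\sigma}\big]
\]
then cancel to order $m_\sigma h^2$ because $\mathbf{n}_{L,\sigma}=-\mathbf{n}_{K,\sigma}$ and $\varphi(x_K)-\varphi(x_L)=\mathcal{O}(h)$; summed over $\edgesint$ this is $\mathcal{O}(h)$. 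After this cancellation one is left precisely with $\sum_K m_K|\varphi(x_K)|^2+\mathcal{O}(h)=\|\varphi\|_{L^2}^2+\mathcal{O}(h)$, which is the sharp bound you need. In short: your route works, but the constant~$1$ comes from an admissibility-driven telescoping across interior edges, not from the cellwise Green identity you quote; you should make that step explicit.
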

Now, we have all the necessary tools on the one hand for the identification of $\psi$, $\psi_1$, $\psi_2$, $g_u$ and $\beta_u$, and on the other hand for completing the proof of Theorem \ref{mainresult}.

\begin{prop}\label{PISL}
The sequences $(\uhnr)_m$ and $(\uhnl)_m$ converge strongly in \linebreak $L^2(\Omega;L^2(0,T;L^2(\Lambda)))$ to the unique variational solution of Problem \eqref{equation} in the sense of Definition \ref{solution}.
\end{prop}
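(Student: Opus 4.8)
The plan is to pass to the limit in a weighted discrete energy relation satisfied by the scheme and to compare it with the continuous stochastic energy equality of Lemma~\ref{lemenergy}; this comparison will simultaneously yield the strong convergence of $(\uhnr)_m$ (hence of $(\uhnl)_m$, since $\|\uhnr-\uhnl\|$ vanishes by~\eqref{210824_05}) and the identification of the limits produced by the nonlinear terms, namely $g_u=g(u)$, $\beta_u=\beta(u)$ and $\psi\in\partial I_{[0,1]}(u)$. We already have several of the ingredients: the limit $u$ satisfies the equation of Definition~\ref{solution} with $\psi,g_u,\beta_u$ in place of the nonlinearities (Proposition~\ref{PTTL}), it satisfies $0\le u\le1$ a.e. (Lemma~\ref{CVSppapn}), and the weak convergences of Lemmas~\ref{CVpeuhnr}, \ref{CVguhnl}, \ref{CVbetauhnl}, together with the decomposition $\psi=\psi_1+\psi_2$ with $\psi_1\le0$, $\psi_2\ge0$ of Remark~\ref{cvtpsi2}, are at our disposal. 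The argument follows the line of \cite[Section~4]{BSZ23} for the passage to the limit in $\Delta t$ and $h$, combined with the monotonicity arguments of \cite{BBBLV} for the penalty parameter $\epsilon$.

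First I would fix $c>0$ and derive a discrete energy relation: multiply the scheme~\eqref{equationapprox} by $\Delta t\,e^{-ct_{n+1}}u_K^{n+1}$, take the expectation, sum over $K\in\Tau$, use the discrete partial integration~\eqref{PInt} for the diffusion term, the elementary identity~\eqref{term1} for the discrete time derivative, and treat the stochastic increment via the $\mathcal F_{t_n}$-measurability of $u_h^n$ and the It\^o isometry exactly as in the proof of Proposition~\ref{bounds} (the mixed term coming from the explicit discretisation of the noise cancels, leaving the discrete It\^o correction $\Delta t\,\mathbb E\|g(u_h^n)\|^2$). Summing over $n$ and then integrating once more over $t\in(0,T)$ produces an integrated, weighted discrete energy relation in which the penalty contribution $\langle\pe(u_h^{n+1}),u_h^{n+1}\rangle$ and the increments $\|u_h^{n+1}-u_h^n\|^2$ appear with a favourable sign (as in~\eqref{lem1beschr}).

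Then I would let $m\to\infty$ in this relation. On the left-hand side the discrete $H^1$-seminorm is controlled from below by Lemma~\ref{keylemma}, the $L^2$-norms by weak lower semicontinuity, and the penalty term converges, by Remark~\ref{cvtpsi2}, to an expression involving $\psi_2$ which, using $\psi_1\le0\le u$ and $0\le\psi_2$, $u\le1$, dominates the corresponding expression with $\psi=\psi_1+\psi_2$. On the right-hand side the source term converges by the strong convergence of $f^l_{h,N}$ (Lemma~\ref{CVfhnl}), the initial term converges since $u_h^0\to u_0$ strongly in $L^2(\Omega;L^2(\Lambda))$ (Lemma~\ref{bound_u0}), and the remaining quadratic terms $\langle\beta(\uhnr),\uhnr\rangle$ and $\|g(\uhnl)\|^2$ are handled through the Lipschitz bounds on $\beta$ and $g$ together with the choice of $c$ large enough for the $L_\beta$- and $L_g^2$-contributions to be absorbed by the $\|u\|^2$-term generated by the weight $e^{-ct}$. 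Comparing the resulting inequality with Lemma~\ref{lemenergy} forces the lower-semicontinuity inequalities used above to be equalities; in particular $\int_0^T\!\int_0^t e^{-cs}\mathbb E\|\uhnr(s)\|^2\,ds\,dt\to\int_0^T\!\int_0^t e^{-cs}\mathbb E\|u(s)\|^2\,ds\,dt$, which together with the weak convergence $\uhnr\rightharpoonup u$ and the $L^\infty(0,T;L^2(\Omega;L^2(\Lambda)))$ bound of Lemma~\ref{210611_lem01} gives the announced strong convergence in $L^2(\Omega;L^2(0,T;L^2(\Lambda)))$. Once strong convergence is at hand, $g(\uhnr)\to g(u)$ and $\beta(\uhnr)\to\beta(u)$ strongly by Lipschitz continuity, so $g_u=g(u)$ and $\beta_u=\beta(u)$; and passing to the limit in the pointwise identities $\frac{(\uhnr)^-}{\epsilon}\,\uhnr=-\epsilon\big(\tfrac{(\uhnr)^-}{\epsilon}\big)^2$ and $\frac{(\uhnr-1)^+}{\epsilon}(\uhnr-1)=\epsilon\big(\tfrac{(\uhnr-1)^+}{\epsilon}\big)^2$, whose right-hand sides tend to $0$ in $L^1$ by Lemma~\ref{buhnrppanp}, using the strong convergence of $\uhnr$ and the weak convergences of Lemma~\ref{CVnpapp}, gives $\psi_1 u=0$ and $\psi_2(1-u)=0$ a.e., whence $\psi\in\partial I_{[0,1]}(u)$. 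By Proposition~\ref{PTTL} the pair $(u,\psi)$ is then a solution in the sense of Definition~\ref{solution}, hence \emph{the} unique weak solution of~\eqref{equation} by \cite{BBBLV}, so every subsequence admits a further subsequence converging to it and the whole sequences converge.

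The main obstacle is the passage to the limit in the energy relation: the quadratic terms $\langle\beta(\uhnr),\uhnr\rangle$, $\|g(\uhnl)\|^2$ and $\langle\pe(\uhnr),\uhnr\rangle$ are products of sequences converging only weakly and therefore do not pass to the limit term by term. This is overcome precisely by the weighted estimate with $c$ large (which absorbs the Lipschitz nonlinearities), by the lower-semicontinuity of the discrete $H^1$-seminorm along the scheme (Lemma~\ref{keylemma}, itself relying on the uniform mesh-regularity bound~\eqref{hoverdkl}), and by the sign-definiteness of the penalty contribution (Remark~\ref{cvtpsi2}), so that the comparison with the continuous energy equality pins down all the weak limits at once.
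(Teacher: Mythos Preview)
Your proposal is correct and follows essentially the same strategy as the paper: a weighted discrete energy inequality (with weight $e^{-ct}$, $c$ chosen large via the Lipschitz constants of $g$ and $\beta$ so that the quadratic defect terms $\|g(\uhnr)-g(u)\|^2$, $\langle\beta(\uhnr)-\beta(u),\uhnr-u\rangle$ and $-c\|\uhnr-u\|^2$ combine with the right sign), passage to the $\limsup$, lower bound on the gradient term via Lemma~\ref{keylemma}, and comparison with the continuous energy equality of Lemma~\ref{lemenergy}. The only noteworthy deviation is your identification of $\psi\in\partial I_{[0,1]}(u)$: the paper reads it off directly from the energy comparison, which yields $\int(\psi_2-\psi u)=\int\big[(1-u)\psi_2-u\psi_1\big]=0$ and hence $(1-u)\psi_2=u\psi_1=0$ a.e.\ by sign; your route via the pointwise identities $\tfrac{(\uhnr)^-}{\epsilon}\uhnr=-\epsilon\big(\tfrac{(\uhnr)^-}{\epsilon}\big)^2$ and the weak--strong product after strong convergence reaches the same two relations and is valid, but it is an extra step the paper does not need. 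A small slip: the quantity whose norm converges in the paper's argument is $\int_0^T e^{-ct}\mathbb E\|\uhnr(t)\|^2\,dt$ (the ``energy at time $t$'' integrated once), not the double integral you wrote, and the reference to Lemma~\ref{bound_u0} for the initial datum gives only a bound, not the strong convergence $u_h^0\to u_0$ you actually need (which is the standard convergence of piecewise-constant projections).
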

\begin{proof}
Let us fix $n\in \{0, ..., N-1\}$, $K\in \Tau$, and multiply \eqref{equationapprox}
by $\dt u_K^{n+1}$, use the formula $a(a-b)=\frac{1}{2}(a^2-b^2+(a-b)^2)$ with $a=u_K^{n+1}$ and $b=\unk$, take the expectation, and proceed as for the obtention of \eqref{term3} to arrive at
\begin{eqnarray*}
&&\frac{m_K}{2}\E\left[(u_K^{n+1})^2-(\unk)^2\right]+\frac{m_K}{2}\E\left[(u_K^{n+1}-\unk)^2\right]\\
&&+\dt\sum_{\sigma=K|L\in\edgesint\cap\edges_K}\frac{m_\sigma}{\dkl}\E\left[(u_K^{n+1}-\unpl)u_K^{n+1}\right]+\dt\, m_K\pe(\unpk)\unpk\\
&\leq&\frac{m_K}{2}\E\left[(u_K^{n+1}-\unk)^2\right]+\frac{m_K\,\dt}{2}\E\left[g^2(\unk)\right]+\dt\, m_K \E\left[\big(\beta(\unpk)+\fnk\big)\unpk\right].
\end{eqnarray*}
Now, we multiply the last inequality by $e^{-c\tn}$ for arbitrary $c>0$. Then, summing over $K\in \Tau$ and $n\in\{0, ..., k\}$ for $k\in\{0, ..., N-1\}$, using \eqref{PInt} and reasoning as in the proof of \eqref{term2} one gets
\begin{eqnarray*}
&&\frac{1}{2}\sum_{n=0}^{k}\sum_{K\in \Tau}m_Ke^{-c\tn}\E\left[(u_K^{n+1})^2-(\unk)^2\right]+\dt\sum_{n=0}^{k}e^{-c\tn}\hspace*{-0.4cm}\sum_{\sigma=K|L\in\edgesint}\frac{m_\sigma}{\dkl}\E\left[|u_K^{n+1}-\unpl|^2\right]\\
&&+\dt\sum_{n=0}^{k}\sum_{K\in \Tau}m_Ke^{-c\tn}\E\left[\pe(\unpk)\unpk\right]\\
&\leq&\frac{\dt}{2}\sum_{n=0}^{k}\sum_{K\in \Tau}m_Ke^{-c\tn}\E\left[g^2(\unk)\right]+\dt\sum_{n=0}^{k}\sum_{K\in \Tau}m_Ke^{-c\tn}\E\left[\big(\beta(\unpk)+\fnk\big)\unpk\right].
\end{eqnarray*}
Let us focus on each sum of this last inequality separately, by using the computations we developed in \cite{BSZ23}.\\
$\bullet$ Note that the general term of the first sum can be decomposed in the following way: 
\begin{eqnarray*}
&&e^{-c\tn}\E\left[(u_K^{n+1})^2-(\unk)^2\right]\\
&=&e^{-c\tn}\E\left[(u_K^{n+1})^2\right]-e^{-ct_{n-1}}\E\left[(\unk)^2\right]-\E\left[(\unk)^2\right]\left( e^{-c\tn}-e^{-ct_{n-1}} \right),
\end{eqnarray*}
where $t_{-1}:=-\dt$.
Firstly, we have
\begin{align}\label{terme1a}
\begin{split}
&\frac{1}{2}\sum_{n=0}^{k}\sum_{K\in \Tau}m_K \left(e^{-c\tn}\E\left[(u_K^{n+1})^2\right]-e^{-ct_{n-1}}\E\left[(\unk)^2\right]\right)\\
=\,&\frac{1}{2}\sum_{K\in \Tau} m_Ke^{-ct_k}\E\left[(\ukpk)^2\right]-\frac{1}{2}\sum_{K\in \Tau} m_K\E\left[(u_K^{0})^2\right]e^{c\dt}.
\end{split}
\end{align}
Then, using properties of the exponential function, 
\begin{align}\label{terme1b}
\begin{split}
&-\frac{1}{2}\sum_{n=0}^{k}\sum_{K\in \Tau}m_K \E\left[(\unk)^2\right]\left( e^{-c\tn}-e^{-ct_{n-1}} \right)\\
>&-\frac{1}{2}\sum_{K\in \Tau}m_K \E\left[(u^0_K)^2\right]\left( 1-e^{c\dt} \right) +\frac{c}{2}e^{-c\dt}\int_0^{t_k}e^{-cs}\E\left[||u^r_{h,N}(s)||^2_{L^2(\Lambda)}\right]ds\\
>&\ \frac{c}{2}e^{-c\dt}\int_0^{t_k}e^{-cs}\E\left[||u^r_{h,N}(s)||^2_{L^2(\Lambda)}\right]ds.
\end{split}
\end{align}
$\bullet$ The second sum can be handled similarly in the following manner
\begin{align}\label{terme2}
\begin{split}
\dt\sum_{n=0}^{k}e^{-c\tn}\sum_{\sigma=K|L\in\edgesint}\frac{m_\sigma}{\dkl}\E\left[|u_K^{n+1}-\unpl|^2\right]
=&\ \dt\sum_{n=0}^{k}e^{-c\tn}\E[|u_h^{n+1}|^2_{1,h}]\\
\geq&\ \int_{0}^{t_{k+1}}e^{-cs}\E[|u^r_{h,N}(s)|^2_{1,h}]\,ds.
\end{split}
\end{align}
$\bullet$ Since $\pe$ is non-decreasing and satisfies $\pe(0)=0$, the third sum can be handled as follows:
\begin{align}\label{terme3}
\begin{aligned}
&\int_{0}^{t_{k+1}}e^{-cs}\E\left[\int_{\Lambda}\pe(u^r_{h,N}(s,x))u^r_{h,N}(s,x)\,dx\right]\,ds\\
=&\sum_{n=0}^{k}\sum_{K\in \Tau}\int_{\tn}^{\tnp}e^{-cs}\E\left[\int_K\pe(\unpk)\unpk\,dx\right]\,ds\\
\leq&\dt\sum_{n=0}^{k}\sum_{K\in \Tau}m_Ke^{-c\tn}\E\left[\pe(\unpk)\unpk\right].
\end{aligned}
\end{align}
$\bullet$ We have also the following majoration of the fourth sum: 
\begin{align}\label{terme4}
\begin{split}
&\frac{\dt}{2}\sum_{n=0}^{k}\sum_{K\in \Tau}m_Ke^{-c\tn}\E\left[g^2(\unk)\right]\\
\leq\,&\frac{\dt}{2}\sum_{K\in \Tau}m_K\E\left[g^2(u_K^0)\right]+\frac{1}{2}\int_{0}^{t_k} e^{-cs}\E\left[||g(u^r_{h,N})(s)||^2_{L^2(\Lambda)}\right]\,ds.
\end{split}
\end{align}
$\bullet$ Using the properties of the exponential function again, the last sum can be handled in the following manner: 
\begin{align}\label{terme5}
\begin{aligned}
&\dt\sum_{n=0}^{k}\sum_{K\in \Tau}m_Ke^{-c\tn}\E\left[\big(\beta(\unpk)+\fnk\big)\unpk\right]\\
\leq&\int_{0}^{t_{k+1}}e^{-cs}\E\left[\int_{\Lambda}\big(\beta(\uhnr(s,x))+f_{h,N}^l(s,x)\big)\uhnr(s,x)\,dx\right]\,ds\\
&+c\dt ||\uhnr||_{L^2\left(\Omega;L^2(0,T;L^2(\Lambda))\right)} \left(L_{\beta}||\uhnr||_{L^2\left(\Omega;L^2(0,T;L^2(\Lambda))\right)} +||f||_{L^2\left(\Omega;L^2(0,T;L^2(\Lambda))\right)}\right).
\end{aligned}
\end{align}
Combining \eqref{terme1a}, \eqref{terme1b}, \eqref{terme2}, \eqref{terme3}, \eqref{terme4} and \eqref{terme5}, one gets
\begin{eqnarray*}
&&\sum_{K\in \Tau} m_Ke^{-ct_k}\E\left[(\ukpk)^2\right]+2\int_{0}^{t_{k+1}}e^{-cs}\E[|u^r_{h,N}(s)|^2_{1,h}]\,ds\\
&&+2\int_{0}^{t_{k+1}}e^{-cs}\E\left[\int_{\Lambda}\pe(u^r_{h,N}(s,x))u^r_{h,N}(s,x)\,dx\right]\,ds\\
&\leq&e^{c\dt}\sum_{K\in \Tau} m_K\E\left[(u_K^{0})^2\right]+\dt\sum_{K\in \Tau}m_K\E\left[g^2(u_K^0)\right]+\int_{0}^{t_k} e^{-cs}\E\left[||g(u^r_{h,N})(s)||^2_{L^2(\Lambda)}\right]ds\\
&&+2\int_{0}^{t_{k+1}}e^{-cs}\E\left[\int_{\Lambda}\big(\beta(\uhnr(s,x))+f^l_{h,N}(s,x)\big)\uhnr(s,x)\,dx\right]\,ds\\
&&-ce^{-c\dt}\int_0^{t_k}e^{-cs}\E\left[||u^r_{h,N}(s)||^2_{L^2(\Lambda)}\right]ds\\
&&+2c\dt ||\uhnr||_{L^2\left(\Omega;L^2(0,T;L^2(\Lambda))\right)} \left(L_{\beta}||\uhnr||_{L^2\left(\Omega;L^2(0,T;L^2(\Lambda))\right)} +||f||_{L^2\left(\Omega;L^2(0,T;L^2(\Lambda))\right)}\right).
\end{eqnarray*}
Moreover, for $t\in [t_k, t_{k+1})$ since $e^{-ct}\leq e^{-ct_k}$ and $(t-\dt)^+\leq t_k$, one obtains that
\begin{eqnarray*}
&&e^{-ct}\E\left[||\uhnr(t)||^2_{L^2(\Lambda)}\right]+2\int_{0}^{t}e^{-cs}\E[|u^r_{h,N}(s)|^2_{1,h}]\,ds\\
&&+2\int_{0}^{t}e^{-cs}\E\left[\int_{\Lambda}\pe(u^r_{h,N}(s,x))u^r_{h,N}(s,x)\,dx\right]\,ds\\
&\leq&e^{c\dt}\sum_{K\in \Tau} m_K\E\left[(u_K^{0})^2\right]+\dt\sum_{K\in \Tau}m_K\E\left[g^2(u_K^0)\right]+\int_{0}^{t} e^{-cs}\E\left[||g(u^r_{h,N})(s)||^2_{L^2(\Lambda)}\right]\,ds\\
&&+2\int_{0}^{t}e^{-cs}\E\left[\int_{\Lambda}\big(\beta(\uhnr(s,x))+f^l_{h,N}(s,x)\big)\uhnr(s,x)\,dx\right]\,ds\\
&&+2\int_{t}^{t_{k+1}}e^{-cs}\E\left[\int_{\Lambda}\big(\beta(\uhnr(s,x))+f^l_{h,N}(s,x)\big)\uhnr(s,x)\,dx\right]\,ds\\
&&-ce^{-c\dt}\int_0^{(t-\dt)^+}e^{-cs}\E\left[||u^r_{h,N}(s)||^2_{L^2(\Lambda)}\right]\,ds\\
&&+2c\dt ||\uhnr||_{L^2\left(\Omega;L^2(0,T;L^2(\Lambda))\right)} \left(L_{\beta}||\uhnr||_{L^2\left(\Omega;L^2(0,T;L^2(\Lambda))\right)} +||f||_{L^2\left(\Omega;L^2(0,T;L^2(\Lambda))\right)}\right).
\end{eqnarray*}
Using the constant $K_0>0$ given by Proposition~\ref{bounds}, one gets the estimate
\begin{align*}
&\int_{(t-\dt)^+}^te^{-cs}\E\left[||u^r_{h,N}(s)||^2_{L^2(\Lambda)}\right]ds\\
&+2\int_{t}^{t_{k+1}}e^{-cs}\E\left[\int_{\Lambda}\big(\beta(\uhnr(s,x))+f^l_{h,N}(s,x)\big)\uhnr(s,x)\,dx\right]\,ds
\\
\leq&\dt (1+2L_{\beta})K_0+2\sqrt{\dt}||f||_{L^2\left(\Omega;L^2(0,T;L^2(\Lambda))\right)}\sqrt{K_0},
\end{align*}
and Chasles' relation $-\int_0^{(t-\dt)^+}=-\int_0^t+\int_{(t-\dt)^+}^t$ yields
\begin{align}\label{ineginter}
	&e^{-ct}\E\left[||\uhnr(t)||^2_{L^2(\Lambda)}\right]+2\int_{0}^{t}e^{-cs}\E[|u^r_{h,N}(s)|^2_{1,h}]\,ds\nonumber \\
	&+2\int_{0}^{t}e^{-cs}\E\left[\int_{\Lambda}\pe(u^r_{h,N}(s,x))u^r_{h,N}(s,x)\,dx\right]\,ds\nonumber\\
	\leq\,&e^{c\dt}\E[||u_0||^2_{L^2(\Lambda)}]+\dt L_g^2\E[||u_0||^2_{L^2(\Lambda)}]+\int_{0}^{t} e^{-cs}\E\left[||g(u^r_{h,N})(s)||^2_{L^2(\Lambda)}\right]\,ds\\
	&+2\int_{0}^{t}e^{-cs}\E\left[\int_{\Lambda}\big(\beta(\uhnr(s,x))+f^l_{h,N}(s,x)\big)\uhnr(s,x)\,dx\right]\,ds\nonumber\\
	&-ce^{-c\dt}\int_0^{t}e^{-cs}\E\left[||u^r_{h,N}(s)||^2_{L^2(\Lambda)}\right]\,ds\nonumber\\
	&+c\dt (1+2L_{\beta})K_0+2c\sqrt{\dt}||f||_{L^2\left(\Omega;L^2(0,T;L^2(\Lambda))\right)}\sqrt{K_0}\nonumber\\
	&+2c\dt ||\uhnr||_{L^2\left(\Omega;L^2(0,T;L^2(\Lambda))\right)} \left(L_{\beta}||\uhnr||_{L^2\left(\Omega;L^2(0,T;L^2(\Lambda))\right)} +||f||_{L^2\left(\Omega;L^2(0,T;L^2(\Lambda))\right)}\right).\nonumber
\end{align}
Furthermore,
\begin{align}\label{decompog}
\begin{split}
&\int_{0}^{t} e^{-cs}\E\left[||g(u^r_{h,N})(s)||^2_{L^2(\Lambda)}\right]ds\\
=&\int_{0}^{t} e^{-cs}\E\left[||g(u^r_{h,N})(s)-g(u)(s)||^2_{L^2(\Lambda)}\right]ds\\
&+2\int_{0}^{t} e^{-cs}\E\left[\int_{\Lambda}g(u^r_{h,N})(s,x)g(u)(s,x)\,dx\right]\,ds-\int_{0}^{t} e^{-cs}\E\left[||g(u)(s)||^2_{L^2(\Lambda)}\right]ds.
\end{split}
\end{align}
In the same manner,
\begin{align}\label{decompou}
&-ce^{-c\dt}\int_0^{t}e^{-cs}\E\left[||u^r_{h,N}(s)||^2_{L^2(\Lambda)}\right]ds\nonumber\\
=&-ce^{-c\dt}\int_{0}^{t} e^{-cs}\E\left[||u^r_{h,N}(s)-u(s)||^2_{L^2(\Lambda)}\right]ds\\
&-2ce^{-c\dt}\int_{0}^{t} e^{-cs}\E\left[\int_{\Lambda}u^r_{h,N}(s,x)u(s,x)\,dx\right]\,ds+ce^{-c\dt}\int_{0}^{t} e^{-cs}\E\left[||u(s)||^2_{L^2(\Lambda)}\right]ds.\nonumber
\end{align}
And at last
\begin{align}\label{decompobeta}
&\int_{0}^{t}e^{-cs}\E\left[\int_{\Lambda}\beta(\uhnr(s,x))\uhnr(s,x)\,dx\right]\,ds\nonumber\\
=&\int_{0}^{t}e^{-cs}\E\left[\int_{\Lambda}\beta(\uhnr(s,x))u(s,x)\,dx\right]\,ds\nonumber\\
&+\int_{0}^{t}e^{-cs}\E\left[\int_{\Lambda}\big(\beta(\uhnr(s,x))-\beta(u(s,x))\big)(\uhnr(s,x)-u(s,x))\,dx\right]\,ds\\
&+\int_{0}^{t}e^{-cs}\E\left[\int_{\Lambda}\beta(u(s,x))(\uhnr(s,x)-u(s,x))\,dx\right]\,ds.\nonumber
\end{align}
Finally, by considering from now on a parameter $c>0$ depending only on $L_g$ and $L_{\beta}$ such that for any $N$ big enough
\begin{align*}
&\int_{0}^{t} e^{-cs}\E\left[||g(u^r_{h,N}(s))-g(u(s))||^2_{L^2(\Lambda)}\right]ds-ce^{-c\dt}\int_{0}^{t} e^{-cs}\E\left[||u^r_{h,N}(s)-u(s)||^2_{L^2(\Lambda)}\right]ds\\
&+2\int_{0}^{t}e^{-cs}\E\left[\int_{\Lambda}\big(\beta(\uhnr(s,x))-\beta(u(s,x))\big)(\uhnr(s,x)-u(s,x))\,dx\right]\,ds\leq 0,
\end{align*}
we are able to prove that, after injecting \eqref{decompog}, \eqref{decompou} and \eqref{decompobeta} in \eqref{ineginter}, and integrating from $0$ to $T$:
\begin{align*}
&\int_0^Te^{-ct}\E\left[||\uhnr(t)||^2_{L^2(\Lambda)}\right]\,dt+2\int_0^T\int_{0}^{t}e^{-cs}\E[|u^r_{h,N}(s)|^2_{1,h}]\,ds\,dt\\
&+2\int_0^T\int_{0}^{t}e^{-cs}\E\left[\int_{\Lambda}\pe(u^r_{h,N}(s,x))u^r_{h,N}(s,x)\,dx\right]\,ds\,dt\nonumber\\
\leq\,&\int_0^T\E[||u_0||^2_{L^2(\Lambda)}]\,dt+2\int_0^T\int_{0}^{t} e^{-cs}\E\left[\int_{\Lambda}g(u^r_{h,N})(s,x)g(u)(s,x)\,dx\right]\,ds\,dt\\
&-\int_0^T\int_{0}^{t} e^{-cs}\E\left[||g(u(s))||^2_{L^2(\Lambda)}\right]\,ds\,dt-2ce^{-c\dt}\int_0^T\int_{0}^{t} e^{-cs}\E\left[\int_{\Lambda}u^r_{h,N}(s,x)u(s,x)\,dx\right]\,ds\,dt\\
&+ce^{-c\dt}\int_0^T\int_{0}^{t} e^{-cs}\E\left[||u(s)||^2_{L^2(\Lambda)}\right]\,ds\,dt+T\left(e^{c\dt}-1\right)\E[||u_0||^2_{L^2(\Lambda)}]\\
&+2\int_{0}^{T}\int_{0}^{t}e^{-cs}\E\left[\int_{\Lambda}\beta(\uhnr(s,x))u(s,x)\,dx\right]\,ds\,dt\\
&+2\int_{0}^{T}\int_{0}^{t}e^{-cs}\E\left[\int_{\Lambda}\beta(u(s,x))\big(\uhnr(s,x)-u(s,x)\big)\,dx\right]\,ds\,dt\\
&+2\int_{0}^{T}\int_{0}^{t}e^{-cs}\E\left[\int_{\Lambda}f^l_{h,N}(s,x)\uhnr(s,x)\,dx\right]\,ds\,dt\nonumber\\
&+c\dt T (1+2L_{\beta})K_0+2cT\sqrt{\dt}||f||_{L^2\left(\Omega;L^2(0,T;L^2(\Lambda))\right)}\sqrt{K_0}\nonumber\\
&+2c\dt T ||\uhnr||_{L^2\left(\Omega;L^2(0,T;L^2(\Lambda))\right)} \left(L_{\beta}||\uhnr||_{L^2\left(\Omega;L^2(0,T;L^2(\Lambda))\right)} +||f||_{L^2\left(\Omega;L^2(0,T;L^2(\Lambda))\right)}\right).\nonumber
\end{align*}
Firstly, by passing to the superior limit in this last inequality, Remark \ref{cvtpsi2} and Lemma \ref{CVfhnl} allow us to state that:
\begin{align*}
&\limsup_{m\rightarrow+\infty}\int_0^Te^{-ct}\E\left[||\uhnr(t)||^2_{L^2(\Lambda)}\right]\,dt+2\liminf_{m\rightarrow+\infty}\int_0^T\int_{0}^{t}e^{-cs}\E[|u^r_{h,N}(s)|^2_{1,h}]\,ds\,dt\\
&+2\int_0^T\int_{0}^te^{-cs}\erwb \int_{\Lambda}\psi_2(s,x)\,dx\erwe \,ds\,dt \\
\leq&\int_0^T\E[||u_0||^2_{L^2(\Lambda)}]\,dt+2\int_0^T\int_{0}^{t} e^{-cs}\E\left[\int_{\Lambda}g_u(s,x)g(u)(s,x)\,dx\right]\,ds\,dt\\
&-\int_0^T\int_{0}^{t} e^{-cs}\E\left[||g(u(s))||^2_{L^2(\Lambda)}\right]\,ds\,dt-c\int_0^T\int_{0}^{t} e^{-cs}\E\left[||u(s)||^2_{L^2(\Lambda)}\right]\,ds\,dt\\
&+2\int_{0}^{T}\int_{0}^{t}e^{-cs}\E\left[\int_{\Lambda}\beta_u(s,x)u(s,x)\,dx\right]\,ds\,dt+2\int_{0}^{T}\int_{0}^{t}e^{-cs}\E\left[\int_{\Lambda}f(s,x)u(s,x)\,dx\right]\,ds\,dt.\nonumber
\end{align*}
Secondly, the stochastic energy equality \eqref{energy} yields
\begin{eqnarray*}
&&\limsup_{m\rightarrow+\infty}\int_0^Te^{-ct}\E\left[||\uhnr(t)||^2_{L^2(\Lambda)}\right]\,dt+2\liminf_{m\rightarrow+\infty}\int_0^T\int_{0}^{t}e^{-cs}\E[|u^r_{h,N}(s)|^2_{1,h}]\,ds\,dt\\
&&+2\int_0^T\int_{0}^te^{-cs}\erwb \int_{\Lambda}\psi_2(s,x)-\psi(s,x)u(s,x)\,dx\erwe \,ds\,dt \\
&\leq&\int_0^T e^{-ct} \E\left[||u(t) ||^2_{L^2(\Lambda)}\right]\,dt +2\int_0^T\int_0^te^{-cs} \E\left[||\nabla u(s) ||^2_{L^2(\Lambda)}\right]\,ds\,dt\\
&&-\int_0^T\int_0^t e^{-cs} \E\left[||g(u(s))-g_u(s)||^2_{L^2(\Lambda)}\right]\,ds\,dt.
\end{eqnarray*}
Thirdly, thanks to the key Inequality \eqref{liminfinequality} given by Lemma \ref{keylemma}, we arrive at
\begin{eqnarray}\label{lastineq}
&&\limsup_{m\rightarrow+\infty}\int_0^Te^{-ct}\E\left[||\uhnr(t)||^2_{L^2(\Lambda)}\right]\,dt+\int_0^T\int_0^t e^{-cs} \E\left[||g(u(s))-g_u(s) ||^2_{L^2(\Lambda)}\right]\,ds\,dt\nonumber
\\
&&+2\int_0^T\int_{0}^te^{-cs}\erwb \int_{\Lambda}\psi_2(s,x)-\psi(s,x)u(s,x)\,dx\erwe \,ds\,dt \nonumber\\
&\leq&\int_0^T e^{-ct} \E\left[||u(t) ||^2_{L^2(\Lambda)}\right]\,dt.
\end{eqnarray}
\noindent Owing to the weak convergence of $(\uhnr)_{m}$ towards $u$ in $L^2(\Omega;L^2(0,T;L^2(\Lambda)))$, we can affirm that the following inequality holds true
\[
\int_0^T e^{-ct} \E\left[||u(t) ||^2_{L^2(\Lambda)}\right]\,dt\leq \liminf_{m\rightarrow+\infty}\int_0^Te^{-ct}\E\left[||\uhnr(t)||^2_{L^2(\Lambda)}\right]\,dt,
\]
so that
\begin{eqnarray}\label{keyineq} 
\hspace*{-0.5cm}\int_0^T\int_{0}^te^{-cs}\erwb \int_{\Lambda}\psi_2(s,x)-\psi(s,x)u(s,x)\,+\left(g(u(s,x))-g_u(s,x)\right)^2\,dx\erwe \,ds\,dt\leq 0.
\end{eqnarray}
By Lemma \ref{CVSppapn} and Remark \ref{cvtpsi2}, we can affirm that $\mathbb{P}$-almost surely in $\Omega$ and almost everywhere in $(0,T)\times \Lambda$, $0\leq u \leq 1$ and $\psi=\psi_1+\psi_2$ with $\psi_1\leq 0$ and $\psi_2\geq 0$. Then $\psi_2-\psi u=(1-u)\psi_2-\psi_1 u\geq 0$ and (\ref{keyineq}) allows us to say that
$g_u=g(u)$ and that $\psi_2-\psi u=0$. In particular, we have since $\psi_2-\psi u=(1-u)\psi_2-\psi_1 u$:
\begin{itemize}
\item In the set $\{u=0\}$, then $\psi_2-\psi u=0$ implies that $\psi_2=0$ and so $\psi=\psi_1\leq 0$.
\item In the set $\{u=1\}$, then $\psi_2-\psi u=0$ implies that $\psi_1=0$ and so $\psi=\psi_2\geq 0$.
\item In the set $\{0<u<1\}$, then $\psi_2-\psi u=0$ implies that $\psi_1=\psi_2=0$ and so $\psi=0$.
\end{itemize}
In this manner, $\psi\in \partial I_{[0,1]}(u)$. Going back to (\ref{lastineq}), we have
\begin{align}\label{240607_01}
\limsup_{m\rightarrow+\infty}\int_0^Te^{-ct}\E\left[||\uhnr(t)||^2_{L^2(\Lambda)}\right]\,dt\leq \int_0^T e^{-ct} \E\left[||u(t) ||^2_{L^2(\Lambda)}\right]\,dt,
\end{align}
and one can thus conclude from \eqref{240607_01} and Proposition \ref{addreg u} that $(\uhnr)_m$ converges strongly to $u$ in $L^2(\Omega;L^2(0,T;L^2(\Lambda)))$, that $\beta_u=\beta(u)$ and that $(u, \psi)$ is the unique variational solution of Problem \eqref{equation} in the sense of Definition \ref{solution}. As a consequence of \cite[Corollary 1.2.23, p.25]{HNVW16} with $(S,\mathcal{A},\mu)=(\Omega,\mathcal{F},\mathds{P})$, $(T,\mathcal{B},\nu)=((0,T),\mathcal{B}(0,T),\lambda)$, $X=L^2(\Lambda)$, $p=2$, $L^2(\Omega;L^2(0,T;L^2(\Lambda)))$ is isometrically isomorphic to $L^2(0,T;L^2(\Omega;L^2(\Lambda)))$, hence $(\uhnr)$ also converges strongly towards $u$ in $L^2(0,T;L^2(\Omega;L^2(\Lambda)))$. Combining this last information with the boundedness of $(\uhnr)_m$ in $L^{\infty}(0,T; L^2(\Omega;L^2(\Lambda)))$ (see Lemma \ref{210611_lem01}) allows us to conclude that $(\uhnr)_m$ converges strongly towards $u$ in $L^{p}(0,T; L^2(\Omega;L^2(\Lambda)))$ for any finite $p\geq 1$ thanks to Vitali's theorem \cite[Corollaire 1.3.3]{D}.
\end{proof}

\textbf{Acknowledgments} The authors would like to thank J.~Droniou and T.~Gallou\"et for
their valuable suggestions. This work has been supported by the German Research Foundation project, Germany (ZI 1542/3-1) and various Procope programs: Project-Related Personal Exchange France-Germany, Germany (49368YE), Procope Mobility Program, France (DEU-22-0004 LG1), Procope Plus Project, France (0185-DEU-22-001 LG 4) and the ANR project SOS2ID: Stochastic Optimization Schemes - Infinite Dimensional and Inertial Dynamics, France (ANR-24-CE40-3786).
\bibliographystyle{plain}
\bibliography{Bauzet_Sultan_Vallet_Zimmermann_AllenCahn_250422.bib}
\end{document}